\newcommand{\R}{\mathbb{R}}  
\newcommand{\N}{\mathbb{N}} 
\newcommand{\C}{\mathbb{C}} 
\newcommand{\D}{\mathbb{D}}
\newcommand{\la}{\lambda}
\newcommand{\chat}{\mathbb{P}^1}
\newcommand{\ra}{\rightarrow}
\newtheorem{theo}{Theorem}
\newtheorem{prop}[theo]{Proposition}
\newtheorem{coro}[theo]{Corollary}
\newtheorem{defi}[theo]{Definition}
\newtheorem{lem}[theo]{Lemma}
\numberwithin{theo}{section}
\theoremstyle{definition}
\newtheorem{defn}[theo]{Definition}
\newtheorem{rem}[theo]{Remark}
\newcommand{\rs}{\mathbb{P}^1}
\newcommand{\id}{\mathrm{Id}}
\newcommand{\card}{\mathrm{card}\,}
\newcommand{\acal}{\mathcal{A}}
\newcommand{\lam}{\lambda}
\newcommand{\ocal}{\mathcal{O}}
\newcommand{\re}{\mathrm{Re}\,}
\newcommand{\eps}{\epsilon}
\renewcommand{\emptyset}{\varnothing}
\def\C{{\mathbb C}}
\def\N{{\mathbb N}}
\def\R{{\mathbb R}}
\def\H{{\mathbb H}}
\def\B{\mathbb{B}}
\def\D{\mathbb{D}}
\def\id{\mathrm{id}}
\newcommand{\dist}{\mathrm{dist}}
\definecolor{Violet}{cmyk}{0.79,0.88,0,0}
\definecolor{ForestGreen}{cmyk}{0.76, 0, 0.76, 0.45}
\definecolor{Lavender}{cmyk}{0,0.48,0,0}
\newcommand{\violet }{\color{Violet}}
\newcommand{\g}{\color{ForestGreen}}
\renewcommand{\AA}{\mathcal{A}}
\renewcommand{\phi}{\varphi}
\newlength{\wdth}
\title{Bifurcations for families of Ahlfors island  maps}
\author{Matthieu Astorg$^{\dag}$}
\author{Anna Miriam benini$^*$}
\author{N\'uria Fagella$^\ddag$}
\thanks{$^{\dag}$ Partially supported by the French ANR grant  PADAWAN /ANR-
	21-CE40-0012-01.}
\thanks{$^*$  Partially supported by the French Italian University and Campus France through the Galileo program, under
the project From rational to transcendental: complex dynamics and parameter spaces; by GNAMPA, INdAM; by PRIN Real and Complex Manifolds: Topology, Geometry and holomorphic dynamics n.2017JZ2SW5.}
\thanks{$^\ddag$  Partially supported by grants PID2023-147252NB-I00 and CEX2020-001084-M (Maria de Maeztu Excellence program) from the Spanish state research agency,  and  ICREA Acad\`emia 2020 from the Catalan government.  }
\address{ M. Astorg:  Institut Denis Poisson\\ Université d'Orléans\\
	France} \email{matthieu.astorg@univ-orleans.fr}
\address{ A.M. Benini: Dipartimento  di   Matematica Fisica e Informatica\\
	Universit\'a di Parma\\ Italy
} \email{ambenini@gmail.com}
\address{ N.  Fagella$^{1,2}$:  \newline 
	$1$ Dep. de Matem\`atiques i Inform\`atica\\ Universitat de Barcelona\\ Barcelona\\  Spain \newline
	$2$ Centre de Recerca Matem\`atica\\ Barcelona\\ Spain} \email{nfagella@ub.edu}
\subjclass[2020]{Primary 37F46,  30D05, 37F10, 30D30, 37F44.}
\begin{document}

\date{\today}
\setstcolor{red}

\begin{abstract}
	We extend Mañé-Sad-Sullivan and Lyubich's equivalent characterization of stability 
	to the setting of   Ahlfors island maps,  which include notably all meromorphic maps. As a consequence we also obtain the density of $J$-stability for finite type maps in the sense of Epstein. 
\end{abstract}

\maketitle

\section{Introduction}

 A fundamental type of questions in complex dynamics relates to parameter spaces. Consider  a  complex connected manifold $M$ and a  holomorphic family 
$\{f_\lam\}_{\lam \in M}$ of holomorphic self-maps $f_\lam: X \to X$ of a Riemann surface $X$, or more generally, of partially defined maps $f_\lam : W _\lam \to X$, with $W_\lam \subset X$. How are global dynamics of $f_\lam$ affected by a variation of the parameter $\lam$? Of particular interest is the set of parameters $\lam \in M$ for which the global dynamics
is stable under perturbation of the parameter in some sense; this is called the \emph{stability} locus, and its complement is the \emph{bifurcation} locus.  
For families of rational maps on $\rs$ of given degree, much is understood. 
In the seminal works \cite{mss} and \cite{lyu1},
it has been proven that several possible notions of stability coincide:  continuous motion of the Julia set ($J$-stability), 
stability of periodic orbits, and stability of critical orbits (passivity).
As a consequence, the  authors obtained the following crucial result: stability is open and dense 
in any holomorphic family of rational maps.

In \cite{EL}, Eremenko and Lyubich extended this result to the setting of \emph{natural families} of \emph{finite type entire maps}.
An entire map is of finite type if it has only finitely many singular values (see Section \ref{sec:preliminaries}   for the definition of singular values, and see Definition \ref{def:natural} for the definition of a natural family). 
The key point in the proof is the absence of collisions between periodic cycles and the essential singularity $\infty$.  
 In  \cite{ABF}, the analogous result was proven for natural families of  finite type \emph{meromorphic} maps. In this setting, due to the presence of poles, it turns out that collisions between periodic points and  $\infty$ do actually occur, creating a new type of bifurcation to be investigated. 

In this paper, we extend the equivalent characterizations of stability to a much broader class of maps, called Ahlfors Island maps, and we show that stability is dense for a subclass of these maps, namely finite type maps. Both of these classes  were introduced by Epstein in his PhD thesis (\cite{epstein1993towers}), see also \cite{remperippon} and \cite{rempevolker}.

\begin{defi}[Ahlfors island map]
	Let $X$ be a compact Riemann surface and $W \subset X$  a non-empty open set. Let $f: W \to X$ be a holomorphic map.
	We say that $f$ has the $N$-island property if given any $N$ Jordan domains $D_1, \ldots, D_N \subset X$ whose closures are pairwise disjoint and any open set $U$ intersecting $\partial W$, there exists $1 \leq i \leq N$ and an open set $\Omega \Subset U \cap W$ such that $f: \Omega \to D_i$ is a conformal isomorphism.
	
	If there exists $N \geq 1$ such that $f$ has the $N$- island property, we say that $f$ is an Ahlfors island map.
\end{defi}

\begin{defi}[Finite type map]
	Let $X$ be a compact Riemann surface, and let $W \subset X$ be a non-empty open set. Let $f: W \to X$ be a holomorphic map. We say that $f$ is a finite type map on $X$ if 
	\begin{enumerate}
		\item $f$ is non-constant on every connected component of $W$
		\item $f$ has no removable singularities
		\item The set of singular values $S(f)$ is finite.
	\end{enumerate}
\end{defi}

Although it is not immediately apparent from these definitions, finite type maps form a subclass of Ahlfors island maps: Epstein proved that finite type maps have the $N+1$-island property, where $N=\card S(f)$ (see \cite[Proposition 9 p.~88]{epstein1993towers}).
The following are examples of finite type maps:
\begin{itemize}
	\item Rational maps on the Riemann sphere $\chat$ of degree $d \geq 2$.
	\item Entire or meromorphic maps $f: \C \to \chat$ with finitely many singular values.
	\item Maps $f:\rs \setminus E \to \rs$ meromorphic outside of a compact totally disconneted set $E$ (see \cite{BDH,BDH2}) and with finitely many singular values.
	\item Universal covers $f: \D \to \rs \setminus \{0,1,\infty\}$ of the thrice punctured sphere, with  $X=\rs$, $W=\D$; in that case $S(f)=\{0,1,\infty\}$.
\item  Horn maps of rational maps (see \cite{epstein1993towers} for a proof of this fact, and see e.g. \cite{bee} for definitions and properties of horn maps). These maps arise as geometrical limits of rational maps with a parabolic fixed point.
\item Any composition or iterate of finite type maps. 
\end{itemize}

More generally, examples of Ahlfors island maps include: 

\begin{itemize}
	\item Arbitrary meromorphic maps: by a deep theorem of Ahlfors  (see \cite{Ahlfors5Islands}, \cite[Theorem A.2]{Bergweiler5Islands}), any transcendental meromorphic map $f: \C \to \chat$ has the 5-island property, hence is an Ahlfors island map. 
	\item Horn maps of semi-parabolic Hénon maps (introduced in \cite{BSU}), satisfy a property which is very close to the Ahlfors island property, called the small island property (see \cite{AB3}). All of our results on Ahlfors island maps also apply to maps with the small island property, although we have chosen to work within the more established setting of Ahlfors island maps.
	\item Any composition or iterate of Ahlfors island maps is also an Ahlfors island map.
\end{itemize}

 Observe that if $f$ has the $N$-island property, then $f$ can omit at most  $N-1$ points in $X$. 
So for instance if $C \subset \rs$ is a closed infinite set, and $f: W \to \rs \setminus C$  is a covering map with $W \subset \rs$,  then $f$ is not an Ahlfors island map.

\medskip

When working with families of rational maps, it is natural to ask that the degree remains fixed throughout the family. We require a similar type of condition for the families of dynamical systems that we will consider, although they are of course typically of infinite degree.
A somewhat stronger but very convenient notion is that of so-called natural families, to our knowledge first introduced in \cite{EL} in the context of entire maps.

\begin{defi}[Natural families]\label{def:natural}
	Let $f: W \to X$ be a holomorphic map, where $X$ is a compact Riemann surface,  $W \subset X$ is an open set, and let  $M$ be a connected complex manifold. A natural family is a family of holomorphic  maps $\{ f_\lam : W_\lam \to X \}_{\lam \in M}$ of the form 
	\begin{equation}\label{eq:natural relation}
	f_\lam:=\varphi_\lam \circ f \circ \psi_\lam^{-1},
	\end{equation}
	where $\varphi_\lam, \psi_\lam: X \to X$ are quasiconformal homeomorphisms depending holomorphically on $\lam \in M$, and $W_\lam=\psi_\lam(W)$.
\end{defi}

Let  $f_\lam:=\varphi_\lam \circ f \circ \psi_\lam^{-1}$ be as above. The following basic but important facts follow directly from the fact that $\varphi_\lambda,\psi_\lambda$ are homeomorphisms:
\begin{enumerate}
	\item[\rm (a)] $\varphi_\lam$ maps $S(f)$ to $S(f_\lam)$
	\item[\rm (b)]  $\psi_\lam$ maps critical points of $f$ to critical points of $f_\lam$, preserving multiplicities
	\item[\rm (c)] if $f: W \to X$ is a finite type map (respectively an Ahlfors island map), then so are the maps 
	$f_\lam : W_\lam \to X$.
\end{enumerate}

\begin{rem}\label{rem:marking}
We can think  of $\varphi_\la$ and $\psi_\la$ as a {\em marking}, giving a motion of the singular values and the critical points respectively. In many cases, we can modify $\phi_\la$  without changing the motion of the $S(f_\la)$. In particular, for any $\la_0 \in M$, we can always assume without loss of generality that $\varphi_{\la_0}=\psi_{\la_0}={\rm Id}$, by redefining $\tilde{\psi}_\la=\psi_\la \circ \psi_{\la_0}^{-1}$ and $\tilde{\varphi}_\la=\varphi_\la \circ \varphi_{\la_0}^{-1}$. We refer to this as \emph{choosing $\la_0$ as basepoint.}
\end{rem}

Generalizing \cite{EL} and \cite{gold-keen}, Epstein proved in \cite{epstein1993towers} that one can always 
embed any finite type map in a (locally) natural family of dimension $\card S(f)$. 
Moreover, this family satisfies a universal property in the sense that any other natural family can be lifted to it. More recently, these results were partially extended to the case of arbitrary entire maps by Ferreira and van Strien (\cite{ferreiravs}).
We will however not require any of these results, and we refer the reader to \cite{epstein1993towers},
\cite{epsteintransversality}, and \cite{ferreiravs}.

Finally, we recall the notion of activity or passivity of a singular value (see \cite{Levin}, \cite{mcmbook}), adapted to our context.

\begin{defi}[Passive singular value]\label{defn:active singular values}
	Let $\{f_\lam = \phi_\la \circ f_{\la_0} \circ \psi_\la^{-1}\}_{\lam \in M}$ be a natural family of Ahlfors island 
	maps. Let $v(\lam):=\varphi_\la(v)$ be a singular value of $f_\lam$ 
	depending holomorphically on $\lam$ near some $\lam_0 \in M$.
	We say that $v(\lam)$ is \emph{passive} at $\lam_0$ if there exists a neighborhood $V$ of $\lam_0$ in $M$ such that:
	\begin{enumerate}
		\item	either $f_\lam^n(v(\lam))\in X \setminus W_\lambda$  for some $n\in\N$ and for all $\lam \in V$; or
		\item the family
		$\{\lam \mapsto f_\lam^n(v(\lam)) \}_{n \in \N}$ is well-defined and normal on $V$.
	\end{enumerate}
We say that  $v(\la)$ is {\em active} at $\la_0$ if it is not passive. 
\end{defi}

 In  classical terminology, a natural family is $J$-stable if there is a holomorphic motion of the Julia set respecting the dynamics  (see Definition \ref{def:holomotion} for a definition of a holomorphic motion).
In the setting of general Ahlfors island maps, we need to impose a compatibility condition between this holomorphic motion and the one induced by $\phi_\la$ on the singular value set $S(f)$.

\begin{defn}[$J$-stability]\label{defi:jstab}
	Let $\{f_\la = \phi_\la \circ f_{\la_0} \circ \psi_\la^{-1}\}_{\la \in U}$ be a natural family of Ahlfors island maps with basepoint $\la_0 \in U$. We  say that $\{f_\la\}_{\la \in U}$ is $J$-stable if there exists a holomorphic motion 
	$H: U \times J(f) \to X$ with basepoint $\la_0$ such that 
	\begin{enumerate}
		\item\label{i:item1} for all $\la \in U$, $h_\la:= H(\la, \cdot): J(f_{\la_0}) \to J(f_\la)$ and 
		$h_\la \circ f_{\la_0} = f_\la \circ h_\la$
		\item\label{i:item2} for all $\la \in U$, $h_\la = \phi_\la$ on $S(f_{\la_0}) \cap J(f_{\la_0})$.
	\end{enumerate}
\end{defn}

In the classical setting of rational maps, $J$-stability is defined by the sole condition (\ref{i:item1}) (see \cite{mss} and \cite{lyu1}).
This remains true more generally for finite type maps, and more subtly, for meromorphic maps whose singular value set has empty interior, as stated in the following proposition.

\begin{prop} \label{prop:equivalent_definitions}
	Let $\{f_\la = \phi_\la \circ f_{\la_0} \circ \psi_\la^{-1}\}_{\la _in M}$ be a natural family of Ahlfors island  maps satisfying \eqref{i:item1} in Definition \ref{defi:jstab}. Assume that either 
	\begin{enumerate}
		\item the map $f_{\la_0}$ is a finite type map; or
		\item the map $f_{\la_0}$ is meromorphic on $\C$ and $S(f_{\la_0})$ has no interior.
	\end{enumerate}
	Then $\{f_\la = \phi_\la \circ f_{\la_0} \circ \psi_\la^{-1}\}_{\la \in M}$ also satisfies \eqref{i:item2} in Definition \ref{defi:jstab}.
\end{prop}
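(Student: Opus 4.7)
The plan is to argue in two steps: first, use the dynamical conjugation in \eqref{i:item1} to show that $h_\la(v)\in S(f_\la)$ for every $v\in S(f_{\la_0})\cap J(f_{\la_0})$; second, identify this point with $\varphi_\la(v)$ by exploiting the empty-interior hypothesis on $S(f_{\la_0})$.

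For the first step, I would proceed by a topological argument. Assume first that $v=f_{\la_0}(c)$ is a critical value with local degree $k\geq 2$ at $c$; backward invariance of the Julia set gives $c\in J(f_{\la_0})$. Under either hypothesis (1) or (2), $S(f_{\la_0})$ has empty interior in $J(f_{\la_0})$, and $J(f_{\la_0})$ has no isolated points, so there is a sequence $v_n\in J(f_{\la_0})\setminus S(f_{\la_0})$ with $v_n\to v$. Each $v_n$ admits $k$ distinct preimages $c_n^{(1)},\dots,c_n^{(k)}\in J(f_{\la_0})$ clustering at $c$. By injectivity of $h_\la$ on the Julia set and the conjugation, the images $h_\la(c_n^{(j)})$ form $k$ distinct preimages of $h_\la(v_n)$ under $f_\la$ that cluster at $h_\la(c)$; this forces $f_\la$ to have local degree at least $k$ at $h_\la(c)$, so $h_\la(v)=f_\la(h_\la(c))$ is a critical value of $f_\la$. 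An analogous argument using logarithmic tracts handles asymptotic values, and the general case $v\in S(f_{\la_0})$ follows by approximation and continuity of $h_\la$ together with closedness of $S(f_\la)$.

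For the second step, set $u(\la):=\varphi_\la^{-1}(h_\la(v))$: this is a continuous curve $M\to X$ taking values in $S(f_{\la_0})$, with $u(\la_0)=v$. In case (1), $S(f_{\la_0})$ is finite, so $u$ is constant by connectedness of $M$. In case (2), differentiating the identity $\varphi_\la(u(\la))=h_\la(v)$ with respect to $\bar\la$, and using that both $\varphi_\la$ and $h_\la(v)$ depend holomorphically on $\la$, yields the Beltrami equation
\[
\partial_{\bar\la} u \;=\; -\mu_{\varphi_\la}(u)\,\overline{\partial_\la u},\qquad \|\mu_{\varphi_\la}\|_\infty<1.
\]
Restricting to one-dimensional holomorphic disks through $\la_0$ in $M$, this makes $u$ a $K$-quasiregular map into the set $S(f_{\la_0})$, which has empty interior. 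By Stoilow's factorization theorem a non-constant quasiregular map has open image, so $u$ must be constant, equal to $v$, and we conclude $h_\la(v)=\varphi_\la(v)$.

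The main obstacle I expect is Step 1 for asymptotic singular values: their ``preimages'' escape to $\partial W$ rather than lying at Julia points, so tracking the holomorphic motion through a logarithmic tract requires more care than the critical-value argument above. A secondary technical point is verifying that $u=\varphi_\la^{-1}\circ h_\la(v)$ has enough Sobolev regularity in $\la$ to legitimately satisfy the Beltrami equation; this should follow from the $W^{1,p}_{\mathrm{loc}}$ regularity of the quasiconformal family $\varphi_\la$ combined with the holomorphic dependence of $h_\la(v)$ on $\la$.
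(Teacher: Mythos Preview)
Your two-step strategy matches the paper's approach: first show $h_\la(v)\in S(f_\la)$ for $v\in S(f_{\la_0})\cap J(f_{\la_0})$, then identify this point with $\phi_\la(v)$ using the empty-interior hypothesis. Step~2 is essentially what the paper does: it invokes a lemma from \cite{ABF} stating that $\la\mapsto\phi_\la^{-1}(h_\la(v))$ restricted to any one-dimensional slice is either constant or locally a branched cover, which is precisely the content of your Beltrami/Sto\"ilow argument. Your treatment of critical values and of the finite-type case in Step~1 is correct and close to the paper's.

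There is, however, a genuine gap in Step~1 for case~(2). Your approximation claim ``the general case follows by approximation and continuity of $h_\la$ together with closedness of $S(f_\la)$'' requires the approximating critical or asymptotic values $v_n\to v$ to lie in $J(f_{\la_0})$, since $h_\la$ is only defined there; but nothing guarantees this, and in general those $v_n$ may well lie in the Fatou set. Moreover, in the meromorphic infinite-type setting an asymptotic value need not be isolated in $S(f_{\la_0})$ and hence need not admit a logarithmic tract, so the ``analogous argument using logarithmic tracts'' is not available either. The paper bypasses both issues by invoking a theorem of Heins: for a transcendental meromorphic map and a domain $D$ meeting $J(f)$, a component $U$ of $f^{-1}(D)$ satisfies that $f:U\to D$ is a biholomorphism if and only if the restriction $f:U\cap J(f)\to D\cap J(f)$ is a homeomorphism. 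This recasts the statement ``$v$ is a singular value'' entirely in terms of the dynamics of $f$ restricted to $J(f)$, which then transfers directly through the topological conjugacy $h_\la$ without any need to approximate $v$ by other singular values.
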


In general, given a natural family of Ahlfors island maps $\{f_\la = \phi_\la \circ f_{\la_0} \circ \psi_\la^{-1} \}_{\la \in M}$, 
the maps $\phi_\la$ and $\psi_\la$ are not unique, that is, there can exist quasiconformal self-homeomorphisms of $X$,  $\tilde \phi_\la, \tilde \psi_\la$ such that for all $\la \in M$, 
$$f_\la = \phi_\la \circ f_{\la_0} \circ \psi_{\la}^{-1} = \tilde \phi_\la \circ f_{\la_0} \circ \tilde \psi_{\la}^{-1}.$$
It is natural to ask whether our Definition \ref{defi:jstab} of $J$-stability depends on the choice of $\phi_\la$ and $\psi_\la$ or only on $f_\la$.
In the case where $\partial W_\la$ is totally disconnected (which includes in particular the case of meromorphic maps and their iterates), the answer is negative.

\begin{prop} \label{prop:indepe}
	Let $f_{\la_0}$ be an Ahlfors island map such that $\partial W(f_{\la_0})$ is totally disconnected, and assume that there are quasiconformal homeomorphisms of $\rs$,  $\phi_\la, \psi_\la, \tilde \phi_\la$ and $\tilde \psi_\la$ depending holomorphically on $\la \in M$ such that for all $\la \in M$,
	$$f_\la := \phi_\la \circ f_{\la_0} \circ \psi_{\la}^{-1} = \tilde \phi_\la \circ f_{\la_0} \circ \tilde \psi_{\la}^{-1}.$$
	Then $\phi_\la = \tilde \phi_\la$ on $S(f_{\la_0})$.
\end{prop}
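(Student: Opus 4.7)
The plan is to translate the hypothesis into a holomorphic family of self-intertwinings of $f_{\la_0}$ and exploit continuity in $\la$ against the totally disconnected hypothesis. Setting $\alpha_\la := \tilde\phi_\la^{-1}\circ\phi_\la$ and $\beta_\la := \tilde\psi_\la^{-1}\circ\psi_\la$, the equality of the two given factorizations of $f_\la$ rearranges to
\[
\alpha_\la \circ f_{\la_0} = f_{\la_0} \circ \beta_\la \qquad \text{on } W(f_{\la_0}),
\]
where $\alpha_\la, \beta_\la$ are quasiconformal self-homeomorphisms of $\rs$ depending holomorphically on $\la$. Choosing $\la_0$ as basepoint for both factorizations, in the sense of Remark~\ref{rem:marking}, I may assume $\alpha_{\la_0} = \beta_{\la_0} = \id$; then $\alpha_\la$ and $\beta_\la$ are holomorphic motions of $\rs$ over $M$, and the statement to prove reduces to $\alpha_\la|_{S(f_{\la_0})} = \id$ for every $\la \in M$.

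A diagram chase using the fact that $v \in S(f)$ is characterized by $f^{-1}(V) \to V$ failing to be a covering for every neighborhood $V$ of $v$, combined with the conjugation of this map by $\beta_\la$ and $\alpha_\la$, shows that $\alpha_\la$ permutes $S(f_{\la_0})$ for every $\la$, and that $\beta_\la$ preserves both $W(f_{\la_0})$ and hence $\partial W(f_{\la_0})$. Because $\partial W(f_{\la_0})$ is totally disconnected, each of its points is its own connected component; the continuous map $\la \mapsto \beta_\la(p)$ then takes values in a totally disconnected set with $\beta_{\la_0}(p) = p$, so by connectedness of $M$ it is constant, yielding $\beta_\la|_{\partial W(f_{\la_0})} = \id$.

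The conclusion will then follow by treating critical and asymptotic singular values separately. For a critical value $v = f_{\la_0}(c)$, the intertwining forces $\beta_\la(c)$ to be a critical point of $f_{\la_0}$; since the critical set of $f_{\la_0}$ is a discrete subset of $W(f_{\la_0})$ and $\beta_{\la_0} = \id$, continuity in $\la$ gives $\beta_\la(c) = c$, hence $\alpha_\la(v) = f_{\la_0}(\beta_\la(c)) = v$. For an asymptotic value $v$, take an asymptotic curve $\gamma : [0,1) \to W(f_{\la_0})$ with $f_{\la_0}(\gamma(t)) \to v$; by total disconnectedness of $\partial W(f_{\la_0})$ the cluster set of $\gamma$ is a single point $p$, and $\beta_\la \circ \gamma$ is an asymptotic curve still ending at $\beta_\la(p) = p$, with associated asymptotic value $\alpha_\la(v)$. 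Using the Ahlfors island property to describe the local conformal structure near $p$, the asymptotic tracts at $p$ organize into a locally discrete collection on which $\beta_\la$ acts continuously from the identity, so it fixes each tract, and therefore $\alpha_\la(v) = v$.

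The main obstacle I anticipate is the asymptotic-values step: one has to genuinely establish that the asymptotic tracts at a point $p \in \partial W(f_{\la_0})$ form a discrete collection on which the continuous family $\beta_\la$ must act trivially. The totally disconnected hypothesis is used twice (to make the cluster set of $\gamma$ a singleton, and earlier to pin $\beta_\la$ down on $\partial W(f_{\la_0})$), while the Ahlfors island property provides the local model near $\partial W(f_{\la_0})$ that forces discreteness of the tract data. Once this discreteness is in place, everything reduces to the same continuity-into-a-discrete-set principle used for the critical set and for $\partial W(f_{\la_0})$.
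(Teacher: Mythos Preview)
Your reduction to the intertwining $\alpha_\la \circ f_{\la_0} = f_{\la_0} \circ \beta_\la$, the argument that $\beta_\la$ fixes $\partial W(f_{\la_0})$ pointwise via total disconnectedness, and the treatment of critical values via discreteness of critical points all match the paper's proof and are correct. The gap is exactly where you anticipated it: the asymptotic-values step, and it is a genuine gap rather than a detail to be filled in along the lines you suggest.

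There are two problems. First, for a general Ahlfors island map the asymptotic value $v$ need not be isolated in $S(f_{\la_0})$, so there need not be any logarithmic tract over $v$; the ``tracts at $p$'' you refer to are not a well-defined discrete collection, and the Ahlfors island property does not furnish such a local model. Second, and more fundamentally, even if $\beta_\la$ were known to preserve the connected component $U$ of $f_{\la_0}^{-1}(D)$ containing $\gamma$, this alone does \emph{not} give $\alpha_\la(v)=v$: you would still need to know that two asymptotic paths in $U$ landing at the same point $p\in\partial W$ (namely $\gamma$ and $\beta_\la\circ\gamma$) carry the same asymptotic value of $f_{\la_0}$. That uniqueness statement is the missing idea, and it is not a continuity-into-a-discrete-set argument.

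The paper supplies it via the Lehto--Virtanen theorem. One takes a universal cover $\pi:\D\to U$ and observes that the isotopic curves $\gamma$ and $\beta_\la\circ\gamma$, being in the same access to $p$ (they are homotopic in $U$ with fixed endpoint $p$, since $\beta_\la$ is isotopic to the identity rel $\partial W$), have lifts to $\D$ landing at the same point $\zeta\in\partial\D$ by a correspondence theorem for accesses. Then $g:=f_{\la_0}\circ\pi:\D\to D$ omits three values (taking $D$ small), so Lehto--Virtanen forces $g$ to have a single asymptotic value along any curve landing at $\zeta$; hence $\alpha_\la(v)=v$. This replaces your ``discreteness of tracts'' mechanism entirely.
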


The proof uses the Lehto-Virtanen theorem; the assumption that $\partial W(f_{\la_0})$ is totally disconnected will be used to ensure that all asymptotic 
paths land in $X$.

We may now state our main results.

\begin{theo}[$J-$stability of Ahlfors Islands maps]\label{th:mssahlfors}
	Let $\{f_\lam\}_{\lam \in M}$ be a natural family of Ahlfors island maps  which are not automorphisms of $X$.
	Let $\lam_0 \in M$. Then the following are equivalent:
	\begin{enumerate}
		\item there exists a neighborhood of $\lam_0$ on which all singular values are passive.
		\item the family is $J$-stable on a neighborhood of $\lam_0$.
	\end{enumerate}
\end{theo}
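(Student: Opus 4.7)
The plan is to follow the Mañé–Sad–Sullivan–Lyubich strategy, adapted to the partially defined setting of Ahlfors island maps. The direction $(2) \Rightarrow (1)$ is the shorter one. Suppose the family is $J$-stable with holomorphic motion $h_\la$, and let $v(\la) = \varphi_\la(v)$ be a marked singular value. If some iterate $f_\la^n(v(\la))$ falls outside $W_\la$, condition (1) of passivity holds on a neighborhood. Otherwise the forward orbit is well defined. If this orbit enters $J(f_{\la_0})$ at some iterate, the conjugacy $h_\la \circ f_{\la_0} = f_\la \circ h_\la$ together with condition (ii) of Definition \ref{defi:jstab} lets me express all subsequent iterates as $h_\la$ applied to fixed points of $J(f_{\la_0})$, which takes values in the compact $X$ and hence forms a normal family. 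If the orbit stays in the Fatou set for all $n$, I would use Słodkowski's extension of $h_\la$ to a motion of $X$, together with the fact that iterates remain uniformly separated from $J(f_\la)$, to obtain normality.

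For $(1) \Rightarrow (2)$ the plan is to build the holomorphic motion $H$ by promoting motions of a dense set of repelling periodic points via the $\la$-lemma. First I would use the Ahlfors island property, applied to inverse branches of some iterate $f_{\la_0}^N$, to obtain density of repelling periodic points in $J(f_{\la_0})$: in any open set meeting $J$ one can realize $N$ small Jordan disks as univalent images of inverse branches, forcing a repelling cycle. Each repelling periodic point $z_0$ of $f_{\la_0}$ admits, by the implicit function theorem, a holomorphic continuation $\la \mapsto z(\la)$ as long as no collision occurs with the boundary $\partial W_\la$ or with the forward orbit of any singular value.

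The key step is to exclude both types of collision using the passivity hypothesis. If $z(\la_1)$ met $f_{\la_1}^k(v(\la_1))$ for some singular value $v$, a Zalcman-type rescaling combined with the island property would produce parameter perturbations along which $v$ bifurcates away from passivity, a contradiction; an analogous argument rules out $z(\la)$ running into $\partial W_\la$. Once both obstructions are excluded in a common neighborhood $U$ of $\la_0$, the motion of repelling periodic points is defined on $U$ and, by the $\la$-lemma, extends to a holomorphic motion $H : U \times J(f_{\la_0}) \to X$ of their closure. The semi-conjugacy $h_\la \circ f_{\la_0} = f_\la \circ h_\la$ is automatic on periodic points and passes to $J(f_{\la_0})$ by density and continuity, proving (i) of Definition \ref{defi:jstab}. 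Condition (ii) is then obtained by prescribing $h_\la = \varphi_\la$ on $S(f_{\la_0}) \cap J(f_{\la_0})$, which is consistent with $H$ because passivity makes the singular orbits in $J$ move normally and the same no-collision arguments apply.

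The hard part will be the no-collision step, for two reasons absent in the classical rational setting: the partially defined character of $f_\la$ and the presence of a possibly complicated boundary $\partial W_\la$ (poles, essential singularities, Cantor-like sets) preclude a direct appeal to Montel's theorem on $X$, while repelling periodic continuations can in principle be destroyed by $\partial W_\la$ itself. The Ahlfors island property serves as the substitute for Montel, providing inverse branches into any pair of disjoint disks intersecting $\partial W$, while passivity of \emph{all} singular values delivers the global normality needed to convert any hypothetical collision into an activity statement. Once these technical steps are in place, the remainder of the proof follows the standard MSS template.
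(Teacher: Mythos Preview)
Your outline follows the classical MSS template, but the paper takes a genuinely different route precisely because that template breaks down here, and your proposal does not bridge the gap.

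For $(1)\Rightarrow(2)$, you plan to move each repelling periodic point by the implicit function theorem and then rule out collisions with $\partial W_\la$ via ``a Zalcman-type rescaling combined with the island property''. This is the step the paper explicitly avoids: in the Ahlfors island setting $\partial W_\la$ can contain continua, and there is no mechanism that converts a collision of a periodic continuation with $\partial W_\la$ into an activity statement for a singular value. (In \cite{ABF} this was done for meromorphic maps using crucially that $\partial W_\la=\{\infty\}$; that argument does not survive.) The paper's substitute is to move not the set of repelling cycles, but the \emph{backward orbit} $\mathcal{O}^-(z_0,f^p)$ of a single repelling periodic point (Proposition~\ref{prop:motiongo}). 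For preimages the obstruction to holomorphic continuation is exactly that $z_0(\la)$ becomes a critical or asymptotic value, i.e.\ a Misiurewicz relation; passivity rules out non-persistent Misiurewicz relations via Lemma~\ref{lem:misiuimpliesactive}, and density of $\mathcal{O}^-(z_0,f^p)$ in $J(f)$ (Lemma~\ref{lem:densepreimages}) plus the $\la$-lemma finishes. Your proposal contains no analogue of this reduction, and the collision-with-$\partial W_\la$ step as you state it is not an argument.

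For $(2)\Rightarrow(1)$, your Fatou case does not work: the S\l odkowski extension of $h_\la$ to $X$ is \emph{not} a conjugacy off $J(f_{\la_0})$, so it tells you nothing about $f_\la^n(v(\la))$ when the orbit stays in the Fatou set. The paper instead argues by contradiction (Proposition~\ref{prop:passivity}): if $v$ were active, Proposition~\ref{prop:misiu} produces nearby parameters with a Misiurewicz relation $f_{\la_1}^n(v_{\la_1})=h_{\la_1}(z)$ for a repelling $z$ not in the forward orbit of $v$; condition~(\ref{i:item2}) then forces $f_{\la_0}^n(v)=z$, a contradiction. This uses the Shooting Lemma / Misiurewicz density machinery developed in Section~2, not a direct normality argument.
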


As in \cite{ABF}, one of the main difficulties in the proof of this type of result 
is the possibility of collisions between periodic cycles and the boundary $\partial W_\la$ of the domain of definition $W_\la$ of the map $f_\la$. In \cite{ABF}, a delicate analysis allowed us to relate this phenomenon to the activity of asymptotic values (see in particular Theorem A). However, this analysis used in a crucial way the fact that $\partial W_\la=\{\infty\}$ in the case of a family of meromorphic maps.
In the more general setting of Ahlfors island maps or even finite type maps, $\partial W_\la$ may contain a continuum, and tracts above asymptotic values may accumulate on such continua in a complicated way. This is why instead of working with the motion of periodic cycles, we consider the motion of the backward orbit of some point $z \in J(f_{\la_0})$, and relate this holomorphic motion (or lack thereof) to the activity or passivity of singular values (see Proposition \ref{prop:motiongo}).

\medskip

In view of Theorem \ref{th:mssahlfors},
we hereafter define the {\em stability locus} of a natural family $\{f_\la \}_{\la \in M}$ as the set of all $\la \in M$ for which one of the above equivalent conditions is satisfied, and the {\em bifurcation locus} as its complement, extending these classical notions to the very general setting of Ahlfors island maps.

In the case of finite type maps, we can obtain one more equivalent characterization of stability.

\begin{theo}[$J-$stability of finite type maps]\label{th:MSS}
	Let $\{f_\lam\}_{\lam \in M}$ be a natural family of finite type maps which are not automorphisms of $X$, and let $\lam_0 \in M$.
	The following are equivalent:
	\begin{enumerate}
		\item there exists a neighborhood of $\lam_0$ on which all singular values are passive
		\item the family is $J$-stable on a neighborhood of $\lam_0$ 
		\item there is a constant $N \in \N$ and a neighborhood $U \subset M$ of $\lam_0$ such that for all $\lam \in U$, 
		the period of any attracting cycle is at most $N$.
	\end{enumerate}
\end{theo}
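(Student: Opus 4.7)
The equivalence $(1) \Leftrightarrow (2)$ is immediate from Theorem \ref{th:mssahlfors}, since finite type maps form a subclass of Ahlfors island maps. My plan is therefore to add $(3)$ to this equivalence, by proving $(1) \Rightarrow (3)$ and $(3) \Rightarrow (1)$ separately.

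For $(1) \Rightarrow (3)$, I would use the classical fact --- due to Fatou in the rational setting and extended to finite type maps --- that the immediate basin of every attracting cycle contains a point of the $\omega$-limit of some $v_i \in S(f)$. Since $\card S(f)$ is a constant of the family, this bounds the number of attracting cycles by $\card S(f)$. For the periods, each attracting cycle of $f_{\la_0}$ of period $p$ persists as an attracting $p$-cycle of $f_\la$ for $\la$ near $\la_0$, by the implicit function theorem applied to $f_\la^p(z) - z = 0$. Conversely, no new attracting cycles can appear nearby: each would have to capture some $v_i(\la)$, but passivity of $v_i$, together with normality of $\{f_\la^n(v_i(\la))\}_n$, forces the capture relation to be locally constant. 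Combining these facts, the periods of attracting cycles are locally constant, hence uniformly bounded near $\la_0$.

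For $(3) \Rightarrow (1)$, I would argue by contrapositive via the classical Ma\~n\'e--Sad--Sullivan strategy. Suppose some singular value $v(\la)$ is active at $\la_0$, and fix $N \geq 1$. Using density of repelling cycles in $J(f_{\la_0})$ (standard for finite type maps), choose three repelling periodic points $z_i(\la_0) \in J(f_{\la_0})$, $i = 1,2,3$, of period at least $N$, with pairwise disjoint orbits. They extend by the implicit function theorem to holomorphic functions $z_i(\la)$ remaining repelling of the same period for $\la$ near $\la_0$. Since $\{f_\la^n(v(\la))\}_n$ is not normal on any neighborhood $U$ of $\la_0$, Montel's theorem forces this family to hit one of the $z_i(\la)$: there exist $\la^* \in U$ and $n_* \in \N$ with $f_{\la^*}^{n_*}(v(\la^*)) = z_i(\la^*)$ for some $i$. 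A standard perturbation argument at the Misiurewicz-type parameter $\la^*$ then produces parameters $\la^{**}$ arbitrarily close to $\la^*$ at which $f_{\la^{**}}$ has an attracting cycle of period equal to the period of $z_i$, which is $\geq N$. This contradicts $(3)$.

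The main obstacle I anticipate is the perturbation step of $(3) \Rightarrow (1)$: one must verify, in the finite type setting, that the relation $f_{\la^*}^{n_*}(v(\la^*)) = z_i(\la^*)$ can genuinely be perturbed to create attracting (rather than neutral or repelling) cycles of the prescribed period. This is classical for rational maps, and relies on the expansion at the repelling cycle together with the non-degeneracy of the motion of $v(\la)$ transverse to that of $z_i(\la)$. For finite type maps, the same argument should go through because the relevant orbits remain in a compactly contained region of $W_\la$, where $f_\la$ is analytic and all computations are essentially classical.
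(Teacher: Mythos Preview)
Your $(1)\Leftrightarrow(2)$ and your $(1)\Rightarrow(3)$ are essentially fine; the paper proves $(2)\Rightarrow(3)$ instead, using that the dynamical holomorphic motion of $J$ sends non-attracting cycles to non-attracting cycles of the same period, but your route via capture of singular values and passivity is a legitimate alternative.

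The real gap is in $(3)\Rightarrow(1)$. Your Montel argument presupposes that the family $\{\la\mapsto f_\la^n(v(\la))\}_{n\in\N}$ is well-defined on a neighborhood of $\la_0$. For finite type maps that are not exceptional, this is precisely what fails: by Proposition~\ref{prop:activity}, activity at $\la_0$ is characterized (in the non-exceptional case) by the orbit of $v$ hitting $\partial W_\la$, either at $\la_0$ itself or along a sequence $\la_k\to\la_0$. In particular, for infinitely many $n$ the map $\la\mapsto f_\la^n(v(\la))$ is not even defined on any neighborhood of $\la_0$, and Montel's theorem cannot be applied. Your closing remark that ``the relevant orbits remain in a compactly contained region of $W_\la$'' is exactly the assertion that breaks down.

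This is not a technicality one can patch locally; it is the reason the paper builds the machinery of Section~4. The paper's route is: first use the Shooting Lemma (Proposition~\ref{lem:shooting}) and Proposition~\ref{prop:trunc} to find parameters $\la_k\to\la_0$ where $f_{\la_k}^{n_k}(v_{\la_k})\in\partial W_{\la_k}$ with $n_k\to\infty$; then, depending on whether $v$ is a critical or an asymptotic value, apply Proposition~\ref{lem:center} (shoot onto a critical point to create a super-attracting cycle of period $\sim n_k$) or Proposition~\ref{lem:vc} (shoot into a logarithmic tract and invoke Lemma~\ref{lem:thbcore}/Theorem~\ref{prop:thb} to create an attracting cycle of period $\sim n_k$). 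The large period comes from $n_k\to\infty$, not from the period of a pre-chosen repelling cycle. Incidentally, even in the rational case your perturbation step does not produce an attracting cycle of period equal to that of $z_i$; the classical argument yields periods of the form $n_*+kp$, so you would need to rethink how the period is controlled even if the Montel step were available.
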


As a consequence, we obtain (as in the case of rational or finite type  meromorphic maps):

\begin{coro}[Density of $J-$stable maps] \label{coro:stabdense}
	The stability locus is open and dense in natural families of finite type maps.
\end{coro}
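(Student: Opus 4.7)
The plan is to prove openness and density separately, using Theorem \ref{th:MSS} as a characterization of stability.

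\textbf{Openness} is immediate from item (1) of Theorem \ref{th:MSS}: passivity of all singular values on a neighborhood of $\la_0$ is by definition an open condition.

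For \textbf{density}, the finite type hypothesis yields finitely many singular values $v_1,\ldots,v_k$, each moving holomorphically as $v_j(\la)=\varphi_\la(v_j)$. Let $\Omega_j\subset M$ denote the (open) passivity locus of $v_j$; by Theorem \ref{th:MSS} the stability locus equals $\bigcap_{j=1}^{k}\Omega_j$. A finite intersection of open dense sets is open and dense, so it suffices to prove density of each $\Omega_j$. Suppose, for contradiction, that there is a non-empty open $V\subset M$ disjoint from $\Omega_j$, so $v_j$ is active at every $\la\in V$. Pick $\la_0\in V$. Since $f_{\la_0}$ is a non-automorphism finite type map, its Julia set contains infinitely many repelling periodic points, so we may fix three distinct ones $p_1,p_2,p_3$. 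By the implicit function theorem (non-unit multipliers) each extends to a holomorphic section $p_i:V'\to X$ on a smaller neighborhood $V'\subset V$ of $\la_0$, and after further shrinking their graphs are pairwise disjoint in $V'\times X$.

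Activity of $v_j$ implies that $g_n(\la):=f_\la^n(v_j(\la))$ is a non-normal family on $V'$. A parametric Montel argument applied in the Kobayashi-hyperbolic open subset $(V'\times X)\setminus \bigcup_{i=1}^{3}\mathrm{graph}(p_i)$ then shows that for every non-empty open $V''\subset V'$ there exist $n\in\N$, $i\in\{1,2,3\}$, and $\la\in V''$ with $g_n(\la)=p_i(\la)$. Equivalently, the countable union $Z:=\bigcup_{n,i} Z_{n,i}$ of the analytic subvarieties $Z_{n,i}:=\{g_n=p_i\}$ is dense in $V'$. By the Baire category theorem some $Z_{n_0,i_0}$ has non-empty interior in $V'$; since a proper analytic subvariety of a connected complex manifold has empty interior, $Z_{n_0,i_0}$ contains an open set $W\subset V'$ on which the identity $f_\la^{n_0}(v_j(\la))\equiv p_{i_0}(\la)$ holds. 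Then for $\la\in W$ the orbit of $v_j(\la)$ is eventually captured by the holomorphically varying periodic cycle of $p_{i_0}$, so $\{g_n\}_n$ is normal on $W$; hence $v_j$ is passive on $W$, contradicting $W\subset V\subset M\setminus\Omega_j$.

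The step I expect to be the main subtlety is the parametric Montel argument. The classical statement concerns maps of a disk into a \emph{fixed} hyperbolic Riemann surface, whereas the relevant ``punctured'' target $X\setminus\{p_1(\la),p_2(\la),p_3(\la)\}$ varies with $\la$. This is resolved by working globally in the total space $V'\times X$ and invoking Kobayashi hyperbolicity of the complement of three disjoint holomorphic graphs; however, some care is needed in the general Ahlfors island setting, since $X$ is an arbitrary compact Riemann surface rather than $\chat$, and one must also ensure that the orbits $g_n(\la)$ do indeed remain in $W_\la$ throughout $V'$ (which follows from activity ruling out case (1) of Definition \ref{defn:active singular values}).
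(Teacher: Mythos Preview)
Your reduction to showing that each individual passivity locus $\Omega_j$ is dense is sound, but the argument you give for this has two genuine gaps.

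First, the Baire step is simply incorrect. From parametric Montel you only obtain that $Z=\bigcup_{n,i} Z_{n,i}$ is \emph{dense} in $V'$, not that $Z=V'$. The Baire category theorem prevents a complete metric space from being a countable union of closed nowhere-dense sets; it says nothing about a dense countable union. A countable union of proper analytic subvarieties can perfectly well be dense without any single one having nonempty interior (e.g.\ a countable dense collection of points in $\C$, or of hyperplanes in $\C^m$). So you cannot deduce that some $Z_{n_0,i_0}$ has interior, and therefore you never produce the persistent Misiurewicz relation on which the contradiction rests.

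Second, your handling of the well-definedness of $g_n(\la)=f_\la^n(v_j(\la))$ is where the transcendental setting bites. Activity at every point of $V$ does \emph{not} rule out the orbit meeting $\partial W_\la$: by Proposition~\ref{prop:activity}, for a non-exceptional finite type map activity is never witnessed by case (3) (well-defined and non-normal), but rather always by cases (1)--(2), i.e.\ by the existence of nearby parameters at which the orbit does reach $\partial W$. Hence on no open subset of $V$ are all the $g_n$ defined, and the Montel argument cannot even be set up. Your last paragraph misreads Definition~\ref{defn:active singular values}: ruling out case~(1) of \emph{passivity} only says the orbit is not uniformly outside $W_\la$ on a neighborhood; it does not force it to stay inside.

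The paper's proof proceeds by a different mechanism that circumvents both issues: instead of seeking a persistent Misiurewicz relation via Baire, it uses the results of Section~4 (Propositions~\ref{lem:center} and~\ref{lem:vc}, built on Theorem~\ref{prop:thb} and the Shooting Lemma, Proposition~\ref{lem:shooting}) to show that near any parameter where $v_j$ is active one can find a parameter where $v_j$ is captured by a (super)attracting cycle and is therefore passive. Iterating over the finitely many singular values yields a nearby stable parameter.
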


On the other hand, using Theorem \ref{th:mssahlfors} is is very easy to construct families of Ahlfors island maps 
with robust bifurcations, i.e. for which the set of $J$-stability is not dense. For instance:


\begin{coro}[ Bifurcation locus with nonempty interior]\label{coro:robustbif}
	Let $\{f_\la \}_{\la \in M}$ be a natural family of Ahlfors island maps and let $\la_0 \in M$. Assume that
	\begin{enumerate}
		\item $v_{\la_0}$ belongs to the interior of $S(f_{\la_0})$
		\item and $v_{\la_0}$ is active at $\la_0$.
	\end{enumerate}
	Then $\la_0$ is in the closure of the interior of the bifurcation locus.
\end{coro}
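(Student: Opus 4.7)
The plan is to show that in every neighborhood $V$ of $\la_0$ there is a nonempty open $U \subset V$ contained in $\Bif$; by Theorem~\ref{th:mssahlfors} and the openness of the stability locus (so $\Bif$ is closed), this reduces to finding, at each $\la \in U$, some singular value of $f_\la$ which is active at $\la$. The role of the interior assumption is to supply an extra dimension of singular values on which to play. Fix a holomorphic disk $D \Subset \mathrm{int}\, S(f_{\la_0})$ centered at $v := v_{\la_0}$. For $\la$ in a neighborhood $V$ of $\la_0$, every point of $\varphi_\la(D)$ is a singular value of $f_\la$, and one can form the two-parameter holomorphic family
\[
\widetilde F_n(\la, v') := f_\la^n\bigl(\varphi_\la(v')\bigr), \qquad (\la, v') \in V \times D.
\]
Activity of $v$ at $\la_0$ is the non-normality of $\{\la \mapsto \widetilde F_n(\la, v)\}_n$ on every neighborhood of $\la_0$ (assuming the orbit stays in $W_\la$; the escape case is treated analogously), which a fortiori makes $\{\widetilde F_n\}$ non-normal at $(\la_0, v)$ in the joint variable.

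A Montel--Zalcman type argument applied to $\{\widetilde F_n\}$, combined with the density of repelling periodic points in $J(f_{\la_0})$, produces $(\la_1, v'_1) \in V \times D$ arbitrarily close to $(\la_0, v)$, an iterate $n$, and a repelling periodic point $p_1$ of $f_{\la_1}$ of some period $q$ with $\widetilde F_n(\la_1, v'_1) = p_1$. The partial map $v' \mapsto \widetilde F_n(\la_1, v')$ is the composition of the quasiconformal homeomorphism $\varphi_{\la_1}$ with the non-constant holomorphic map $f_{\la_1}^n$, so it is open and its image contains a neighborhood of $p_1$. Together with the persistence of the repelling cycle as a holomorphic family $\la \mapsto p_\la$ on a neighborhood of $\la_1$, this yields an open $U \ni \la_1$ and a continuous section $\la \mapsto v'(\la) \in D$ with $\widetilde F_n(\la, v'(\la)) = p_\la$ for all $\la \in U$.

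It remains to show that for each $\la \in U$, the natural family of singular values $\mu \mapsto \varphi_\mu(v'(\la))$ of $f_\mu$ is active at $\mu = \la$. Setting $\beta(\mu) := f_\mu^n(\varphi_\mu(v'(\la))) - p_\mu$ (so $\beta(\la) = 0$) and writing $\rho(\mu)$ for the multiplier of $p_\mu$, linearization at the repelling cycle gives
\[
f_\mu^{n+mq}\bigl(\varphi_\mu(v'(\la))\bigr) \approx p_\mu + \rho(\mu)^m \beta(\mu)
\]
near $\mu = \la$; since $|\rho(\la)|>1$, the $\mu$-derivative at $\mu = \la$ blows up in $m$, producing non-normality of the orbit family unless $\beta \equiv 0$. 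This \emph{persistent-preimage} condition cuts out a proper analytic subvariety of $U$, so activity holds on an open dense subset of $U$; by closedness of $\Bif$, the whole of $U$ lies in $\Bif$, which yields $\la_0 \in \overline{\mathrm{int}\,\Bif}$.

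The main obstacle is the Montel--Zalcman step producing the actual collision $\widetilde F_n(\la_1, v'_1) = p_1$ inside the two-variable family, together with the careful choice of $p_1$ ensuring that the openness and persistence arguments transfer to an open subset of $M$ rather than merely to a codimension-one subvariety; handling the escape alternative in the definition of activity requires a separate but parallel argument exploiting the interior disk $D$ to create robust hitting of $\partial W_\la$.
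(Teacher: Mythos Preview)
Your overall strategy coincides with the paper's: produce a Misiurewicz collision near $\la_0$, use the interior hypothesis on $S(f)$ to propagate it to an open set of parameters, and then invoke Lemma~\ref{lem:misiuimpliesactive} together with Theorem~\ref{th:mssahlfors}. The organization differs. Rather than your two-variable family $\widetilde F_n(\la,v')$ and a continuous section $v'(\la)$, the paper first applies Proposition~\ref{prop:misiu} to obtain $\la_1$ with $f_{\la_1}^n(v(\la_1))=z(\la_1)$ non-persistently (this is precisely your ``Montel--Zalcman step'', which you rightly identify as the crux and which is not a routine normality argument in the non-exceptional case), and then uses Lemma~\ref{lem:marked} to follow a \emph{holomorphic} preimage $\la\mapsto x_\la$ of $z(\la)$ in phase space with $x_{\la_1}=v(\la_1)$. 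The interior assumption then gives $x_\la\in S(f_\la)$ for $\la$ near $\la_1$, and non-persistence of the resulting Misiurewicz relation at each such $\la_*$ follows because the natural motion of the singular value $x_{\la_*}$ cannot coincide identically with $x_\la$ without forcing the original relation at $\la_1$ to be persistent.

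The genuine gap in your argument is the assertion that the persistent-preimage condition ``cuts out a proper analytic subvariety of $U$''. Your section $v'(\la)$ is a priori only continuous (it arises by inverting the merely quasiregular map $v'\mapsto f_\la^n(\varphi_\la(v'))$), so the persistent locus $\{\la_*:\beta_{\la_*}\equiv 0\}$ is not visibly analytic in $\la_*$, and nothing rules out $v'$ being locally constant on an open subset. What you actually need is that $v'$ is nowhere locally constant; this can be recovered by writing $v'(\la)=\varphi_\la^{-1}(x_\la)$ with $x_\la$ holomorphic (via Lemma~\ref{lem:marked}) and invoking Lemma~\ref{lem:qr} to see that $v'$ is either constant or open, the constant case being excluded by the non-persistence at $\la_1$. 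But once you carry this out you have essentially reproduced the paper's route through $x_\la$. Finally, the ``escape alternative'' you flag is already absorbed into Proposition~\ref{prop:misiu} via Proposition~\ref{prop:activity} and the Shooting Lemma (Proposition~\ref{lem:shooting}), so no parallel treatment is required.
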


There are many examples of Ahlfors island maps $f_{\la_0}$ satisfying the first condition of 
Corollary \ref{coro:robustbif}; in fact, there exists entire or meromorphic maps $f$ with $S(f) = \rs$.

\subsection*{Acknowledgements}
We wish to thank Adam Epstein and Lasse Rempe for helpful discussions. We would also like to thank Lasse Rempe for pointing out the need for item (2) in Definition \ref{defi:jstab} in the setting of Ahlfors island maps.

 \section{Activity Locus of Ahlfors Island Maps}

\subsection{Preliminaries: Ahlfors island maps, finite type maps and singular values}\label{sec:preliminaries}

 Choose an arbitrary hermitian metric on $X$	(the choice is not important since $X$ is compact), and denote by  $d_X$ the distance it induces. Unless otherwise stated, distances in $X$ will be measured in the sense of $d_X$.

We start by recalling the definitions of critical, asymptotic and singular values. Let $W,X$ be two Riemann surfaces and let $f: W\ra X$ holomorphic. A point $c\in W$ is  \emph{critical} if   $f'(c)=0$. 
A  value $v\in X$ is \emph{critical} if it is the image of a critical point; it is \emph{asymptotic} if there is a curve $\gamma: \R_+  \to W$ such that, as $t\ra\infty$,  $\gamma(t)\ra\partial W$   and $f(\gamma(t))\ra v$. (Notice that if $W \subset X$,  we do not require $\gamma(t)$ to converge in $X$). A \emph{logarithmic tract}   over an asymptotic value $v$ is a simply connected domain $T\subset W$ such that $f:T\ra D\setminus\{v\}$ is an infinite degree unbranched covering over a  topological disk   $D$  punctured at $v$.
Any limit point of a (possibly constant) sequence of critical or asymptotic values is called a {\em singular value}.
If we denote by $S(f)$ the set of all singular values of a holomorphic map $f: W \to X$, it is 
a classical result that $f: W \setminus f^{-1}(S(f)) \to X \setminus S(f)$ is a covering map
(assuming that $X \setminus S(f)$ is not empty).

As a consequence, any asymptotic value that is isolated in $S(f)$ admits logarithmic tracts.
This is in particular the case for finite type maps, although not in general for Ahlfors island maps.

\subsection{Ahlfors island maps}

We now move on to basic properties of Ahlfors island maps, including a classification of so-called exceptional Ahlfors island maps which will sometimes require separate arguments. Roughly speaking, these exceptional cases correspond to those for which few or no points can escape the domain of definition, and include notably rational and entire maps.

By considering a sequence of neighborhoods shrinking to a boundary point $z$,  the  island property implies that if $\partial W \neq \emptyset$, then there are at most finitely many values which do not have infinitely many preimages   in $W$ under $f$, and every  such value is a singular value.  In particular, if $f$ is an Ahlfors island map of finite degree, then we must have $W=X$.


\begin{defi}[Fatou and Julia sets]
Let $f : W \to X$ be an Ahlfors island map. The Fatou set $F(f)$ of $f$ is defined as the union of all open subsets $U \subset W$ such that
\begin{enumerate}
\item either there exists $n \in \N^*$ such that $f^n(U) \cap W = \emptyset$, or
\item  $f^n(U)\subset W $ for all $n\in \N$ and $\{f^n :U \to X \}_{n\in\N}$ is normal.
\end{enumerate}
The Julia set is $J(f) := X \setminus F(f)$.
\end{defi}
Observe that by this definition, we have $\partial W \subset J(f)$, where $\partial W$ denotes the boundary of
$W$ as a subset of $X$. Hence if $W \neq X$, then the Julia set is non-empty. 
The theorem below (stated for finite type maps, but whose proof also works for Ahlfors island map) gives a much stronger statement:

\begin{theo}[{\cite[p.~100]{epstein1993towers}}]\label{th:repdense}
	Let $f: W \to X$ be an {Ahlfors island map} which is not an automorphism of $X$.
	Then $J(f)$ is non-empty, perfect, and repelling cycles are dense in $J(f)$.
\end{theo}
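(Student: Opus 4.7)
The three conclusions --- nonemptiness of $J(f)$, density of repelling periodic cycles, and perfectness --- are addressed in that order, with perfectness deduced as a consequence of density. For nonemptiness, if $W\neq X$ the inclusion $\partial W\subset J(f)$ noted just after the definition suffices (since $X$ is connected, $\partial W$ is nonempty); if $W=X$ then $f$ is a holomorphic self-map of the compact surface $X$ of topological degree at least $2$ (being Ahlfors island and not an automorphism), and classical Fatou--Julia theory --- rational maps on $\mathbb{P}^1$, multiplications on a torus, nonexistence on higher genus --- gives $J(f)\neq\varnothing$.

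Fix $z_0\in J(f)$ and a neighborhood $U$ of $z_0$: the goal is a repelling periodic point of $f$ in $U$. The strategy is blow-up plus contraction. In the blow-up step one produces an integer $n\geq 1$, a Jordan domain $D\subset X$, and an open set $\Omega\subset U\cap W$ such that $\overline{\Omega}\subset D$ and $f^n:\Omega\to D$ is a conformal isomorphism. Once this is achieved, the inverse branch $g:=(f^n|_{\Omega})^{-1}:D\to\Omega\Subset D$ is a holomorphic self-map of $D$ whose image is relatively compact in $D$. Uniformizing $D$ by the unit disk and applying the Schwarz lemma in the hyperbolic metric of $D$, $g$ is a strict hyperbolic contraction and therefore has a unique attracting fixed point $p\in\Omega\subset U$; this $p$ is a repelling periodic point of $f$ of period dividing $n$. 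Iterating the construction with shrinking neighborhoods of $z_0$ (and arranging the period $n$ to grow, so the resulting periodic points are genuinely distinct) produces infinitely many distinct repelling periodic points accumulating at $z_0$, simultaneously giving density of repelling cycles in $J(f)$ and showing that $J(f)$ has no isolated points.

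To carry out the blow-up one combines the nonnormality of $\{f^n\}$ in any neighborhood of $z_0\in J(f)$ with the $N$-island property of $f$ (and of its iterates, which are again Ahlfors island maps by composition). These ingredients feed into an Ahlfors--Montel-type theorem from Epstein's thesis: nonnormality together with the island property forces that for any prescribed collection of $N$ pairwise disjoint Jordan disks $D_1,\dots,D_N\subset X$, some iterate $f^n$ admits a conformal preimage $\Omega\Subset U\cap W$ of at least one of the $D_i$. A careful choice of the $D_i$ --- taken small, and placed so that the resulting preimage $\Omega$ sits inside $D_i$ --- yields the nested configuration $\overline{\Omega}\subset D:=D_i$ needed above.

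\textbf{Main obstacle.} The hard part is precisely this blow-up step. In the classical rational setting one invokes Montel's theorem on three omitted values, which is unavailable for general Ahlfors island maps, since $f$ may legitimately omit up to $N-1$ values. The genuine substitute is the $N$-island property, which has to be propagated through iterates and combined with a careful, iteratively refined choice of target Jordan disks to secure both the location of $\Omega$ inside $U$ and the nested containment $\overline{\Omega}\subset D$. This is where the substantive technical work of Epstein's argument concentrates.
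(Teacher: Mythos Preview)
The paper does not supply its own proof of this theorem: it is quoted verbatim from Epstein's thesis (the citation \cite[p.~100]{epstein1993towers} in the statement header is the entire content the paper provides). So there is no ``paper's proof'' to compare your proposal against; what follows is an assessment of your sketch on its own merits and against the argument one finds in Epstein.

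Your overall strategy is the right one and matches Epstein's: manufacture, near an arbitrary $z_0\in J(f)$, a univalent iterate $f^n:\Omega\to D$ with $\overline\Omega\subset D$, then invoke the Schwarz--Pick contraction of the inverse branch to locate a repelling cycle in $\Omega$. The nonemptiness and perfectness paragraphs are fine.

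There is, however, a genuine gap at the step you yourself flag as the hard one. You write that ``a careful choice of the $D_i$ \dots placed so that the resulting preimage $\Omega$ sits inside $D_i$'' yields $\overline\Omega\subset D_i$. But the island property only gives $\Omega\Subset U$ for the ambient neighborhood $U$ in which the $D_i$ live; it gives no control over \emph{which} $D_j$ the domain $\Omega$ falls into, and in general $\Omega$ need not lie in $D_i$ at all. No amount of shrinking or placement of a single family of targets fixes this. The standard way to close the gap (and what Epstein does) is a pigeonhole argument: choose the $D_1,\dots,D_N$ inside $U$ with disjoint closures, each centered at a point of $J(f)$; for each $j$ apply the island-for-iterates property \emph{inside $D_j$} to obtain $\Omega_j\Subset D_j$ and $f^{n_j}:\Omega_j\to D_{\sigma(j)}$ conformal for some $\sigma(j)\in\{1,\dots,N\}$. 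Iterating the map $j\mapsto\sigma(j)$ on the finite set $\{1,\dots,N\}$ produces a cycle $j_1\to j_2\to\cdots\to j_k\to j_1$; composing the corresponding branches gives a conformal $f^{n}:\Omega'\to D_{j_1}$ with $\Omega'\Subset D_{j_1}\subset U$, which is exactly the nested configuration you need. Without this cycle/pigeonhole step the argument does not close.
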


\begin{defi}[Exceptional  Ahlfors islands maps]
	Let $f: W \to X$ be  an Ahlfors island map. Let  $W_\infty:=\mathrm{int} \bigcap_{n \geq 0} f^{-n}(W)$.
	We say that $f$ is exceptional if either $f$ is an automorphism of $X$ or $W_\infty$ is non-empty and  non-hyperbolic.
\end{defi}

\begin{lem}[Exceptional Ahlfors islands maps]\label{lem:exceptional}
	An Ahlfors island map  $f:W\ra X$ is exceptional if and only if it is analytically conjugated to one of the following types:
	\begin{enumerate}
		\item a rational self-map of $\rs$
		\item an affine endomorphism of a complex torus
		\item an entire map 
		\item  a meromorphic map with exactly one pole, which is also an omitted value
		\item a self-map of $\C^*$ with essential singularities at $0$ and $\infty$
		\item an automorphism of $X$.
	\end{enumerate}
\end{lem}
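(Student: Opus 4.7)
For the backward implication, in each of the cases (1)--(6) I would compute $W_\infty$ directly: in (1) and (2), $W=X$ is compact, so $W_\infty=X$; in (3), $W_\infty=W=\C$; in (4), the pole being an omitted value gives $f^{-1}(W)=W$, hence $W_\infty=W\cong\C^*$; in (5), $W=\C^*$ is forward-invariant, so $W_\infty=\C^*$. All these surfaces are non-hyperbolic, and case (6) is exceptional by definition.

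For the forward direction, assume $f$ is exceptional but not an automorphism, so that $W_\infty$ is non-empty and non-hyperbolic, and therefore has a non-hyperbolic connected component $U$. By the uniformization theorem, $U$ is biholomorphic to one of $\rs$, $\C$, $\C^*$, or a complex torus. Since $U$ is an open subset of the compact Riemann surface $X$, a topological argument shows: $U=X$ if $U$ is $\rs$ or a torus; $X=\rs$ with $X\setminus U$ a single point if $U\cong\C$; and $X=\rs$ with $X\setminus U$ consisting of two points if $U\cong\C^*$. In each case $X\setminus U$ has empty interior, so $W_\infty$ cannot contain any other component, giving $W_\infty=U$.

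Using the forward invariance $f(W_\infty)\subset W_\infty$ (immediate from the definition of $W_\infty$), I would then classify $f$ according to the conformal type of $U$. If $U=\rs$ or a torus, then $W=U=X$ is compact and $f$ is a holomorphic self-map of $X$, which must be rational (case (1)) or an affine torus endomorphism (case (2)). If $U\cong\C$, a conjugation places the missing point at $\infty$, so that $f$ restricts to an entire self-map of $\C$: case (3). If $U\cong\C^*$, I conjugate so that the missing points are $\{0,\infty\}$, and then $f(\C^*)\subset\C^*$. Each of $0$ and $\infty$ is either a removable singularity (which reduces to a previous case after enlarging $W$), a pole, or an essential singularity of $f$. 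By Picard's great theorem, the omitted values at an essential singularity must lie in $\{0,\infty\}$. An exhaustive enumeration shows that the pole-at-both configuration forces $f$ to extend to a rational map (case (1)), while the remaining possibilities give case (4) (one pole, which is automatically an omitted value, together with one essential singularity at the other point, after the involution $z\mapsto 1/z$ if necessary) or case (5) (essential singularities at both $0$ and $\infty$ with $\{0,\infty\}$ as Picard exceptional values).

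The main technical obstacle will be the $U\cong\C^*$ case: one must carefully enumerate the singularity types at $\{0,\infty\}$ consistent with $f(\C^*)\subset\C^*$, and reduce each configuration (via extension across removable singularities and, if needed, the involution $z\mapsto 1/z$) to one of the standard forms (1), (4), or (5).
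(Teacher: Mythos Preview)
Your overall strategy is sound and reaches the right conclusion, but it diverges from the paper's in a way that makes the $\C^*$ case more tangled than necessary. The paper does not case-split on the conformal type of $W_\infty$; instead it immediately observes that a non-hyperbolic component of $W_\infty$ forces a non-hyperbolic component of $W$ (since $W_\infty\subset W$), and then case-splits on $W\in\{\rs,\C,\C^*,\text{torus}\}$. This is cleaner because the Ahlfors island property directly constrains $\partial W$: any point of $\partial W$ is automatically an essential singularity of $f$, so once you know $W\cong\C$ or $W\cong\C^*$ the nature of the boundary points is immediate. From there, the paper just bounds $\card\bigcup_n f^{-n}(\partial W)$ by $2$ (otherwise $W_\infty$ would be hyperbolic) and invokes Picard to force the pole in case~(4) to be omitted, or the two boundary points in case~(5) to be omitted.

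Your approach instead analyzes isolated singularities of $f|_{W_\infty}$ at $\{0,\infty\}$, which conflates two different situations: genuine boundary points of $W$ (where the island property forces an essential singularity) and points of $W\setminus W_\infty$ (where $f$ is perfectly holomorphic). Your ``removable singularity, reduce by enlarging $W$'' subcase is really the latter---say $0\in W$---and there is nothing to enlarge; rather $W\supset\C$ and you are back in the paper's $W\cong\C$ case. Your ``pole-at-both'' configuration cannot occur at all, since it would give $W=\rs$, hence $W_\infty=\rs\neq\C^*$. Finally, a small slip in the backward direction: in case~(4), $f^{-1}(W)=\C\setminus\{p\}\neq W$; the correct invariance is $f^{-1}(\C\setminus\{p\})=\C\setminus\{p\}$ (since $p$ is omitted), which still yields $W_\infty\cong\C^*$.
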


\begin{proof}
	 By definition if $f$ is exceptional and not an automorphism then there is at least one {connected component of $W_\infty$} which is not hyperbolic. In particular, there is one connected component of $W$ which is not hyperbolic. Since $W\subset X$, $X$ is also not hyperbolic. 	This means that $W$ and $X$ are  isomorphic to one of the following: $\rs$, $\C$, $\C^*$, or a complex torus.	
	 
	 If $W \simeq \rs$,  then $X=\rs$ and $f$ is a rational map.
	 
If $W$ is a complex torus, then $W=X$ and $f$ must be an affine endomorphism of $X$ (since endomorphisms of complex tori are all affine).
	
	If $W \simeq \C$, then  $X=\rs$ (since $X$ is compact) and $f$ is a transcendental meromorphic map (since the island property implies infinite degree). Moreover, we must have 
	$$\card \bigcup_{n \geq 0} f^{-n}(\{\infty\}) \leq 2,$$ 
	since otherwise $W$ would be hyperbolic. This  implies that $f$ has at most one pole, and  that this pole must be an omitted value; otherwise, by Picard's theorem, we would have  $\card \bigcup_{n \geq 0} f^{-n}(\{\infty\}) = \infty$.

	Finally, if $W \simeq \C^*$, then  the island property implies that $f$ has two essential singularities at $0$ and $\infty$, and  as before we must have $\card \bigcup_{n \geq 0} f^{-n}(\{0,\infty\}) \leq 2$, so both $0$ and $\infty$ are omitted values and $f$ is a self-map of $\C^*$.

	Conversely, it is clear that maps of the form (1)-(6) are exceptional.
\end{proof}

The following is an immediate consequence of the exhaustive list given above.

\begin{coro}\label{cor:hyperbolic are non exceptional}
	If $W$ is hyperbolic and non-compact,  then $f$ is not exceptional.
\end{coro}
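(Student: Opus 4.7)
The statement follows directly by inspecting each of the six cases listed in Lemma \ref{lem:exceptional} and checking that in every one of them, the domain of definition $W$ is either compact or non-hyperbolic. So my plan is to simply run through the classification.

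More precisely, in cases (1), (2) and (6) one has $W = X$: for a rational self-map of $\rs$ we get $W = \rs$; for an affine endomorphism of a complex torus we get $W$ equal to the torus; and for an automorphism of $X$ we get $W = X$ by definition. In all three situations $W$ is compact, so the hypothesis that $W$ is non-compact rules them out. In case (3), $f$ is an entire map, so $W = \C$, which is non-hyperbolic. In case (5), $W = \C^*$, again non-hyperbolic. In case (4), $f$ is meromorphic on $\C$ with its unique pole an omitted value, so $W$ is $\C$ with that pole removed, hence conformally equivalent to $\C^*$, which is non-hyperbolic.

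Consequently, if $W$ is simultaneously hyperbolic and non-compact, then $f$ matches none of the six types in Lemma \ref{lem:exceptional}, and therefore $f$ is not exceptional. There is no real obstacle to this argument; it is purely a verification against the list. If a more intrinsic argument were desired, one could instead work directly from the definition of exceptional: $W$ non-compact together with $W \subsetneq X$ (since $X$ is compact) already forces $f$ not to be an automorphism of $X$, and since any open subset of a hyperbolic Riemann surface is hyperbolic, the set $W_\infty \subset W$ is either empty or hyperbolic whenever $W$ is hyperbolic, so the second clause of the definition of exceptional also fails.
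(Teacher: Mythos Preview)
Your proof is correct and follows exactly the approach the paper intends: the paper simply states that the corollary is an immediate consequence of the exhaustive list in Lemma \ref{lem:exceptional}, and your case-by-case check is precisely that verification. One small slip: in case (4) the map is meromorphic on $\C$, so $W=\C$ (the pole lies in $W$, it is not removed from the domain); but since $\C$ is non-hyperbolic anyway, this does not affect the argument, and your alternative direct argument via $W_\infty \subset W$ is also valid.
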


Note that the converse is not true, since a meromorphic map with infinitely many poles is not exceptional.

\medskip

In natural families, either the whole family is exceptional  or exceptional maps form a proper analytic subset.
\begin{prop}[Natural family with exceptional maps]\label{prop:sparsexcep}
Let $\{f_\lambda\}_{\lambda\in M}$ be a natural family of Ahlfors island maps. Then either all maps in $M$ are exceptional, or the set of exceptional maps in 
$M$ forms a (possibly empty) proper analytic subset of $M$.
\end{prop}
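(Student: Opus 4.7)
The plan is to partition $M$ according to the topological type of $W$, which is preserved along the family because each $\psi_\lambda:X\to X$ is a homeomorphism, so $W_\lambda=\psi_\lambda(W)$ is homeomorphic to $W$. We then use Lemma \ref{lem:exceptional} to identify which maps can be exceptional in each case. If $W=X$, then $W_\lambda=X$ for every $\lambda\in M$, so each $f_\lambda:X\to X$ is a non-constant holomorphic self-map of a compact Riemann surface. Depending on whether $X\simeq\chat$, $X$ is a complex torus, or $X$ is hyperbolic (in the last case any non-constant holomorphic self-map is automatically an automorphism by Riemann--Hurwitz), $f_\lambda$ falls in case (1), (2) or (6) of Lemma \ref{lem:exceptional}, so the exceptional locus $E$ equals all of $M$.

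Suppose now $W\subsetneq X$. If $W$ is hyperbolic as a Riemann surface, then so is $W_\lambda$, because hyperbolicity of an open subset of a compact Riemann surface is a purely topological property (equivalent to $|X\setminus W|\geq 3$ when $X=\chat$, and automatic for proper open subsets of $X$ of higher genus). Consequently $W_{\lambda,\infty}\subset W_\lambda$ is hyperbolic whenever non-empty, and $f_\lambda$ cannot be an automorphism of $X$ since $W_\lambda\neq X$; hence no $f_\lambda$ is exceptional and $E=\emptyset$.

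The remaining case is $W\subsetneq X$ non-hyperbolic, which forces $X=\chat$ and $W\simeq\C$ or $\C^*$, with $|\partial W|\in\{1,2\}$. Here $f_\lambda$ is exceptional if and only if it falls in case (3), (4) or (5) of Lemma \ref{lem:exceptional}. All three cases require $O(f_\lambda)=\varphi_\lambda(O(f))$ to have cardinality at most $2$, where $O(f):=X\setminus f(W)$ denotes the omitted set; if $|O(f)|>2$ we immediately obtain $E=\emptyset$. Assuming $|O(f)|\leq 2$, each exceptional type translates into conditions on $\lambda$: cases (3) and (5) amount to $f_\lambda(W_\lambda)\subset W_\lambda$, equivalent to $\varphi_\lambda^{-1}(\psi_\lambda(\partial W))\subset O(f)$, which decomposes as a finite union of equations of the form $\psi_\lambda(b)=\varphi_\lambda(o)$ with $b\in\partial W$ and $o\in O(f)$; case (4) additionally requires the unique $f_\lambda$-preimage of $\partial W_\lambda$ to be an omitted value, which translates into further analytic equations. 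Since $\varphi_\lambda,\psi_\lambda$ depend holomorphically on $\lambda$, each such equation cuts out an analytic subset of $M$, and as they are finite in number $E$ is analytic; either $E=M$ or $E$ is a proper analytic subset.

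The main technical point is the precise analytic formulation of case (4), since it involves an iterated preimage. I expect to handle it by observing that the condition $|\bigcup_n f_\lambda^{-n}(\partial W_\lambda)|=2$ forces $f^{-1}(\varphi_\lambda^{-1}(\psi_\lambda(\partial W)))$ to reduce to a single point $r$ depending holomorphically on $\lambda$ (because a single-valued branch of $f^{-1}$ is then available), and the omission condition $\psi_\lambda(r)\in\varphi_\lambda(O(f))$ becomes a genuinely analytic equation relating the markings $\varphi_\lambda,\psi_\lambda$ to the finite data of $f$.
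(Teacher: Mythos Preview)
Your overall strategy is sound and reaches the same conclusion, but the paper's argument is considerably more economical. Rather than analyzing every possible topological type of $W$ from scratch, the paper first observes that if $E\neq\emptyset$ one may take the basepoint $f=f_{\lambda_0}$ itself to be exceptional; Lemma~\ref{lem:exceptional} then pins down $W$ and, crucially, the omitted set $O(f)$ (it coincides with $\partial W$ in the $\C$ and $\C^*$ cases). The decisive simplification is to \emph{normalize} $\psi_\lambda$ so that it fixes $\partial W$ pointwise (possible since $|\partial W|\leq 2$): this makes $\partial W_\lambda=\partial W$ for all $\lambda$, and the exceptional condition reduces, via Picard's theorem, to whether $\varphi_\lambda$ permutes $\partial W$. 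For $W=\C^*$ this yields $E=\{\lambda:\varphi_\lambda(0)=0,\ \varphi_\lambda(\infty)=\infty\}$ in one line. The meromorphic case $W\simeq\C$ is deferred to \cite[Prop.~5.4]{ABF}. Your approach, by not normalizing and not assuming a base exceptional map, has to track both $\varphi_\lambda$ and $\psi_\lambda$ and enumerate Picard exceptional values of an arbitrary $f$, which is workable but heavier.

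Your handling of case~(4) is the weakest part. The phrase ``a single-valued branch of $f^{-1}$ is then available'' is not the right justification: the unique pole may well be a critical point. The correct observation is that ``$f_\lambda$ has at most one pole'' forces $\varphi_\lambda^{-1}(\psi_\lambda(\infty))$ to lie in the \emph{finite} set of Picard exceptional values of $f$ having at most one preimage; on each stratum $\{\varphi_\lambda^{-1}(\psi_\lambda(\infty))=q\}$ the unique $f$-preimage $r_q$ is \emph{constant} (not ``depending holomorphically on $\lambda$''), and the omission condition $\psi_\lambda(r_q)\in\varphi_\lambda(O(f))$ is then a finite union of analytic equations. With this repair your argument goes through.
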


\begin{proof}
	Let $f_\lam  = \varphi_\lam \circ f_{\lam_0} \circ \psi_\lam^{-1}$	and assume that $f:=f_{\lam_0}$ is exceptional. As usual, we may assume that $\varphi_{\lam_0} = \psi_{\lam_0}=\id$.
If $f$ is an automorphism, so is $f_\lambda$ for all $\lambda\in M$ and we are done. Otherwise,  
	 by Lemma \ref{lem:exceptional},
	 either $X=W$ is a complex torus, or $X=\rs$ and $W=\rs$, $\C$ or $\C^*$.
		 If $W=\rs$ or if $W$ is a complex torus, then clearly all maps in the family 
	are exceptional, so we can further reduce to the case where $W=\C$ or $\C^*$, i.e. $f$ is either meromorphic or 
	defined on $\C^*$ with two essential singularities. 
	
	The meromorphic case was treated in \cite[Prop. 5.4]{ABF}, so we only need to deal with the case where $W=\C^*$, which is similar. By the classification of Lemma \ref{lem:exceptional}, $f$ omits $0$ and $\infty$.	
	Without loss of generality, we may normalize $\psi_\lam$ so that it always fixes $0$ and $\infty$. 
	Then for all $\lam \in M$, $\varphi_\lam(0)$ and $\varphi_\lam(\infty)$ are omitted values of $f_\la$. By Picard's theorem, $f_\lam$ is then exceptional if and only if $\varphi_\lam( \{0,\infty\}) = \{0,\infty\}$.
	Indeed, if this relation is not satisfied, then at least one of $0$ or $\infty$ is not a Picard exceptional value, 
	so that say $\card f_\lam^{-1}(\{0,\infty\})=\infty$ and $f_\lam$ is not exceptional.
	By connectivity of $M$, the set of $\lam \in M$ such that $f_\la$ is exceptional is then exactly
	$$E:=\{ \lam \in M : \varphi_\lam(0)=0 \text{ and } { \varphi}_\lam(\infty)=\infty\}.$$
	This set is either all of $M$ or a (possibly empty) proper analytic subset of $M$.
\end{proof}

We record here the following well-known version of Montel's theorem (see e.g. \cite[Corollary 3.3]{milnor}).

\begin{lem}[Montel's Theorem]\label{lem:normalf}
	Let $U,V$  be hyperbolic Riemann surfaces. Then the family of maps $\{f\mid f: U \to V \text{ holomorphic} \}$ is a normal family.
\end{lem}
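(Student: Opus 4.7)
The plan is to combine the Schwarz--Pick lemma on hyperbolic metrics with the Arzel\`a--Ascoli theorem. By the Uniformization Theorem, both $U$ and $V$ carry intrinsic complete hyperbolic metrics $d_U$ and $d_V$; I will use crucially that $d_V$ is complete, which makes closed hyperbolic balls in $V$ compact subsets of $V$. The Schwarz--Pick lemma then asserts that every holomorphic map $f : U \to V$ is $1$-Lipschitz from $(U, d_U)$ to $(V, d_V)$, so the family $\mathcal{F}$ of all holomorphic maps $U \to V$ is uniformly equicontinuous in these hyperbolic distances.

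Now fix a basepoint $x_0 \in U$ and an arbitrary sequence $(f_n) \subset \mathcal{F}$. I would distinguish two cases according to the behaviour of $(f_n(x_0))$ in $V$. If some subsequence $f_{n_k}(x_0)$ converges to a point $v \in V$, then for every compact $K \subset U$ the $d_U$-diameter of $K$ is finite and the $1$-Lipschitz bound confines $\bigcup_k f_{n_k}(K)$ to a closed $d_V$-ball centered near $v$, which is compact by completeness of $d_V$. Arzel\`a--Ascoli then yields a further subsequence converging locally uniformly on $U$ to a continuous map, and this limit is holomorphic by the standard Weierstrass theorem for Riemann-surface-valued maps. If on the other hand $(f_n(x_0))$ leaves every compact subset of $V$, the Lipschitz bound propagates to show that $(f_n(x))$ does so uniformly on compact subsets of $U$; in the standard convention for normality on Riemann surfaces, $(f_n)$ then escapes locally uniformly to the ideal boundary of $V$.

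A routine diagonal argument along an exhaustion of $U$ by compacts packages the two alternatives into the desired conclusion: every sequence in $\mathcal{F}$ admits a subsequence that either converges locally uniformly to a holomorphic map $U \to V$, or exits every compact subset of $V$ locally uniformly. I expect the main obstacle to be essentially conceptual, namely pinning down the convention adopted for normality when the target is not compact (the ``escape'' alternative must be allowed, exactly as in Milnor's convention cited in the statement); once that is fixed, Schwarz--Pick combined with Arzel\`a--Ascoli makes the analytic content essentially immediate.
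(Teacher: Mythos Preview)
Your argument is correct and is exactly the standard proof via Schwarz--Pick plus Arzel\`a--Ascoli. The paper itself does not give a proof of this lemma: it simply records it as a well-known version of Montel's theorem and refers to \cite[Corollary 3.3]{milnor}, where precisely this approach is carried out. So there is nothing to compare; you have supplied the proof the paper leaves to the reference.
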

. 
Lemma \ref{lem:boundary} below generalizes to the case of non-exceptional Ahlfors island maps the well-known characterization of the Julia set as  the closure of the set of prepoles of $f$. Recall that   a transcendental meromorphic map $f$ is non-exceptional if and only if there is at least one non-omitted pole.

\begin{lem}[Characterization of $J(f)$]\label{lem:boundary}
	Let $f: W \to X$ be a non-exceptional   Ahlfors island map. Then
	$J(f)=\overline{\bigcup_{n \geq 0} f^{-n}(\partial W)}$.
\end{lem}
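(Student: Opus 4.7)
The plan is to prove the two inclusions separately. For $\overline{\bigcup_{n\geq 0} f^{-n}(\partial W)} \subset J(f)$, I would first observe that $F(f)\subset W$ by definition, giving $\partial W \subset J(f)$. For $n\geq 1$, I would show $f^{-n}(\partial W)\subset J(f)$ by contradiction: if $z\in f^{-n}(\partial W)\cap F(f)$ lies in a Fatou open neighborhood $U\subset W$, then condition~(2) of the Fatou definition cannot hold because $f^n(z)\in\partial W$ lies outside $W$; condition~(1), namely $f^k(U)\cap W=\emptyset$ for some $k$, also fails in each sub-case $k<n$, $k=n$ (using that every neighborhood of $f^n(z)\in\partial W$ meets $W$), and $k>n$. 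Taking closures finishes this inclusion since $J(f)$ is closed.

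For the reverse inclusion $J(f)\subset\overline{\bigcup_n f^{-n}(\partial W)}$, I would argue by contradiction. Let $z\in J(f)\setminus\overline{\bigcup_n f^{-n}(\partial W)}$; then $z\notin\partial W$, hence $z\in W$, and $z$ admits a small hyperbolic disk neighborhood $V\subset W$ disjoint from $\bigcup_n f^{-n}(\partial W)$. The key step is to prove by induction on $n$ that $f^n(V)\subset W$. Assuming this up to index $n$, the image $f^{n+1}(V)$ is connected and disjoint from $\partial W$. Since $X\setminus\partial W=W\sqcup(X\setminus\overline{W})$ is a disjoint union of two open sets, connectedness forces $f^{n+1}(V)\subset W$ or $f^{n+1}(V)\subset X\setminus\overline{W}$. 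In the second case $f^{n+1}(V)\cap W=\emptyset$, and item~(1) of the Fatou definition would place $V\subset F(f)$, contradicting $z\in V\cap J(f)$. Hence $f^{n+1}(V)\subset W$, closing the induction and yielding $V\subset W_\infty$.

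To conclude, I would invoke non-exceptionality: by Lemma~\ref{lem:exceptional}, $W_\infty$ is either empty or hyperbolic. The first case contradicts $V\subset W_\infty$ being nonempty; in the second, Lemma~\ref{lem:normalf} applied to the family $\{f^n|_V:V\to W_\infty\}_{n\geq 0}$ ensures normality, so $z\in F(f)$, the desired contradiction.

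The main obstacle will be the inductive step, which relies on the topological decomposition $X=W\sqcup\partial W\sqcup(X\setminus\overline{W})$ and the dichotomy it yields for connected images disjoint from $\partial W$. Non-exceptionality enters only at the end, via Lemma~\ref{lem:exceptional}, to produce the final contradiction through Montel's theorem.
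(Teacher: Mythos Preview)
Your proof is correct and matches the paper's approach: for the reverse inclusion, both show that a neighborhood disjoint from $\bigcup_n f^{-n}(\partial W)$ either eventually maps outside $W$ (Fatou case~(1)) or lies in $W_\infty$, and then conclude via non-exceptionality and Lemma~\ref{lem:normalf}. The paper states this dichotomy directly (as a contrapositive) rather than as your induction-by-contradiction, and dismisses the first inclusion simply as ``by definition'', but the substance is identical.
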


\begin{proof}	
	The inclusion $\overline{\bigcup_{n \geq 0} f^{-n}(\partial W)} \subset J(f)$ is always true by definition. Conversely, if $z\notin \overline{\bigcup_{n \geq 0} f^{-n}(\partial W)}$, then there exists a neighborhood $U$ of $z$ such that $f^n(U)\cap \partial W=\emptyset$ for all $n\in \N$. Therefore either there exists $n\in \N$ such that $f^n(U)\cap W=\emptyset$, and hence $z\in F(f)$; or $f^n(U)\subset W$  for all $n\in \N$, i.e. $U\subset W_\infty$. Since $f$ is non-exceptional, $W_\infty$ is hyperbolic. Thus  $(f^n|_{W_\infty})_{n\in\N}$
	is normal  by Lemma \ref{lem:normalf} and $z\in F(f)$.

\end{proof}

For a holomorphic map $f$ with an isolated essential singularity,   a point $z_0$ is called Picard exceptional if and only if $f^{-1}(z_0)$ has finite cardinality. 
By analogy, with introduce the following terminology.

\begin{defi}[Picard exceptional value]
	Let $f: W \to X$ be an Ahlfors island map with $\partial W \neq \emptyset$. We will say that $v \in X$ is a Picard exceptional value if 
	$f^{-1}(v)$ is finite (possibly empty).
\end{defi}

By definition, if $f$ has the $N$ island property and $\partial W \neq \emptyset$, then it has at most $N-1$ Picard exceptional values. 
Lemma \ref{lem:densepreimages} below generalizes the characterization of the Julia set as  the closure of the backward orbit of any point $z_0$ which is not Picard exceptional. One can show that Picard exceptional values are always  asymptotic values.   

\begin{lem}[Preimages of non exceptional values are dense in $J(f)$]\label{lem:densepreimages}
	Let $f: W \to X$ be  a non-exceptional Ahlfors island map, and let $p \in \N^*$.
	Let $z_0 \in J(f)$ and assume that $z_0$ is not a Picard exceptional value. Then
	$\bigcup_{n \geq 0} f^{-np}(\{z_0\})$ is dense in $J(f)$. 
\end{lem}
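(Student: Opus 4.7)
The strategy combines Lemma \ref{lem:boundary}, the open mapping theorem, and the $N$-island property of $f$ in order to push $z_0$ backward into an arbitrary open set $U \subset X$ intersecting $J(f)$. Concretely, I will exhibit an integer $k \geq 1$ and a point $x \in U$ with $f^{kp}(x) = z_0$.

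\emph{Step 1: produce an open set meeting $\partial W$ that is the $f^m$-image of a subset of $U$.} By Lemma \ref{lem:boundary}, one can choose $w \in U$ and a smallest $m \geq 0$ with $f^m(w) \in \partial W$. If $m = 0$, set $\tilde U := U$; otherwise, by minimality $f^\ell(w) \in W$ for all $\ell \leq m-1$, and by continuity and openness of $W$ there is a connected neighborhood $U_0 \subset U$ of $w$ on which $f^m$ is well-defined and holomorphic. The open mapping theorem then gives that $\tilde U := f^m(U_0)$ is open in $X$. It contains the boundary point $f^m(w) \in \partial W$, so it meets $\partial W$, and since every neighborhood of a point of $\partial W$ meets $W$, it also meets $W$.

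\emph{Step 2: apply the island property to pull $z_0$ back.} Let $N$ be such that $f$ has the $N$-island property, and choose $k \geq 1$ large enough that $j := kp - m - 1 \geq 1$. Because $z_0$ is not Picard exceptional and $f$ has discrete fibers, iterated preimages of an infinite set remain infinite; hence $f^{-j}(z_0)$ is infinite, and we may choose distinct $\zeta_1, \dots, \zeta_N \in f^{-j}(z_0)$ together with pairwise-disjoint Jordan neighborhoods $D_i \ni \zeta_i$ with pairwise-disjoint closures. Applying the $N$-island property to $\tilde U$ yields an index $i$ and an open set $\Omega \Subset \tilde U \cap W$ such that $f : \Omega \to D_i$ is a conformal isomorphism. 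Letting $y \in \Omega$ be the unique $f|_\Omega$-preimage of $\zeta_i$, we obtain $f^{kp-m}(y) = f^{j+1}(y) = f^j(\zeta_i) = z_0$. Since $y \in \tilde U = f^m(U_0)$, there exists $x \in U_0 \subset U$ with $f^m(x) = y$, and therefore $f^{kp}(x) = z_0$, as required.

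The main obstacle is Step 1: turning a $J(f)$-hit of $U$ into a genuinely open set which honestly intersects $\partial W$. This is where Lemma \ref{lem:boundary} is essential, together with the choice of minimal $m$ (so that $f^{m-1}$ is defined on a neighborhood of $w$) and the open mapping theorem (so that $\tilde U$ is open). Once $\tilde U$ is available, Step 2 reduces to bookkeeping: one chooses $k$ so that the total exponent $m + 1 + j = kp$ is the required multiple of $p$.
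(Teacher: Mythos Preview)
Your argument is correct and follows the same strategy as the paper's: use Lemma~\ref{lem:boundary} to push $U$ forward until it meets $\partial W$, then apply the $N$-island property to pull back a preimage of $z_0$ into $U$. The only real difference is that the paper first reduces to $p=1$ via $J(f)=J(f^p)$, whereas you handle general $p$ directly by choosing $j=kp-m-1$; your treatment of the domain of $f^m$ (via the minimal $m$) is also slightly more careful than the paper's.
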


\begin{proof}
	 Since repelling cycles are dense in the Julia set by Theorem \ref{th:repdense}  we have that $J(f)=J(f^p)$, hence  we may assume without loss of generality that $p=1$.	Let $U \subset W$ be an open set which intersects $J(f)$. By Lemma \ref{lem:boundary}, there 
	exists $n \in \N^*$ and $z \in U$ such that $f^n(z) \in \partial W$. Since $z_0$ is not   a Picard exceptional value and 
	$f$ is non-exceptional (hence has infinite degree), $z_0$ has infinitely many preimages : choose $N+1$ such preimages $x_1, \ldots, x_{N+1}$, 
	where $f$ has the $N$-island property, and let $D_1, \ldots, D_{N+1}$ be Jordan domains with pairwise disjoint closures containing each $x_i$. 
	By the island property, there exists $1 \leq i \leq N+1$ and $\Omega \Subset f^n(U) \cap W$ such that 
	$f: \Omega \to D_i$ is a conformal isomorphism. In particular,  $f^{n+2}: U \cap f^{-n}(\Omega) \to f(D_i)$ is well-defined and surjective; in other words, there exists $z_1 \in U$ such that $f^{n+2}(z_1) = z_0$, as desired.
\end{proof}


\subsection{Activity of singular values and preliminary results}

In this section we collect some results about activity and passivity 
of singular values in natural families of Ahlfors island maps.
	In what follows, we fix a natural family $\{f_\lam=\varphi_\la \circ f \circ \psi_\la^{-1}\}_{\lam \in M}$ of Ahlfors island maps.
	
Given a  singular value $v_{\la_0}$ we consider the holomorphic function $v(\la):=\varphi_\la(v_{\la_0})$. Since $\{f_\la\}_{\la\in M}$ is a natural family, $v_\la$ is a singular value for $f_\la$ for each $\la\in M$, of the same type as $v_{\la_0}$. With this in mind  we often refer to $v(\la)$ as a singular value, although technically it is a holomorphic function whose value $v(\lambda)$ is a singular value for  $f_\la$ for each fixed $\lambda$. We also use the equivalent  notation $v_\la=v(\la)$.

Recall the definition of activity/passivity given in the introduction.

\begin{defi}[Passive singular value]
	Let $\{f_\lam\}_{\lam \in M}$ be a natural family of finite type 
	maps. Let $v(\lam)$ be a singular value (or a critical point) of $f_\lam$ 
	depending holomorphically on $\lam$ near some $\lam_0 \in M$.
	We say that $v(\lam)$ is \emph{passive} at $\lam_0$ if there exists a neighborhood $V$ of $\lam_0$ in $M$ such that:
	\begin{enumerate}
		\item	either $f_\lam^n(v(\lam))\in X \setminus W_\lambda$  for some $n\in\N$ and for all $\lam \in V$; or
		\item the family
		$\{\lam \mapsto f_\lam^n(v(\lam)) \}_{n \in \N}$ is well-defined and normal on $V$.
	\end{enumerate}
	We say that  $v(\la)$ is {\em active} at $\la_0$ if it is not passive. 
\end{defi}

\begin{defi}[Activity locus]
	Given  a singular value $v_\la$ we define its  {\em activity locus}  as the set of parameters
	\[
	\mathcal{A}(v_\la) = \{ \la_0 \in M\mid v_\la \text{\ is active at $\la_0$} \}.%
	\]
\end{defi}

 It is important to remark that the concept of activity must be associated only to non-persistent behaviour.

\begin{defi}[Persistence] \label{def:np}
 We say that $f_{\la_0}^n(v(\la_0))\in X \setminus W_{ \lambda_0}$ (resp.   $f_{\la_0}^n(v(\la_0))\in \partial  W_{ \lambda_0}$) \emph{persistently} if for all $\lam$ in a neighborhood of $\lam_0$, we have $f_{\la}^n(v(\la))\in X \setminus W_{\lambda}$  (resp.   $f_{\la}^n(v(\la))\in \partial  W_{\g \lambda}$).
\end{defi}

\begin{lem}[Persistence property]\label{lem:Gct}
	Let $\{f_\lam\}_{\lam \in M}$ be a natural family of finite type 
	maps. Let $v(\lam)$ be a singular value (or a critical point) of $f_\lam$ 
	depending holomorphically on $\lam\in M$ If $v_\la$. If  $n \in \N$ is such that $f_\la^n(v_\la) \in \partial W_\la$ 
	for all $\la$ in an open subset of $M$, then  $f_\la^n(v_\la) \in \partial W_\la$ persistently on $M$.
\end{lem}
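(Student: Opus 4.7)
The plan is an identity-principle argument on the connected manifold $M$, exploiting that both $\lambda \mapsto f_\lambda^n(v_\lambda)$ (where defined) and $\lambda \mapsto \psi_\lambda(w)$ for any fixed $w \in X$ are holomorphic in $\lambda$. Concretely, set
$$M_n := \{\lambda \in M : f_\lambda^k(v_\lambda) \in W_\lambda \text{ for all } 0 \le k \le n-1\};$$
this is an open subset of $M$ containing the hypothesized open set $U$, and on it the iterate $F(\lambda) := f_\lambda^n(v_\lambda)$ is well defined and holomorphic. Since $\psi_\lambda : X \to X$ is a homeomorphism and $W_\lambda = \psi_\lambda(W)$, we have $\partial W_\lambda = \psi_\lambda(\partial W)$; hence the hypothesis reduces to saying that the continuous map $g : U \to \partial W$, $g(\lambda) := \psi_\lambda^{-1}(F(\lambda))$, actually takes values in the fixed model boundary $\partial W \subset X$.

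The core reduction is to find some $w_* \in \partial W$ and a non-empty open $V \subset U$ on which $g \equiv w_*$. Once this is achieved, $F(\lambda) = \psi_\lambda(w_*)$ on $V$, and since both sides are holomorphic functions of $\lambda$ on the connected open set $M_n$, the identity principle forces $F(\lambda) = \psi_\lambda(w_*)$ throughout $M_n$, so $F(\lambda) \in \partial W_\lambda$ for every $\lambda \in M_n$. A continuity argument then upgrades this to $M_n = M$: at any $\lambda_0 \in \overline{M_n} \cap M$ the candidate value $\psi_{\lambda_0}(w_*)$ is defined and lies in $\partial W_{\lambda_0}$, and one rules out that an earlier iterate $f_{\lambda_0}^k(v_{\lambda_0})$ with $k < n$ is itself in $\partial W_{\lambda_0}$, possibly by inducting on the iteration level using the same reduction.

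The main obstacle is producing $w_*$ and $V$, i.e.\ showing that the continuous map $g$ is locally constant. In the main cases of interest---meromorphic maps (for which $\partial W = \{\infty\}$), maps meromorphic outside a totally disconnected compact set, and iterates thereof---the boundary $\partial W$ is totally disconnected, and then $g$ is automatically locally constant by connectedness of small neighborhoods in $U$. In the fully general Ahlfors-island setting $\partial W$ may contain continua, and a more delicate argument is required; this is where the main technical work lies, likely requiring careful exploitation of the open-mapping property of the holomorphic function $F$ on $M_n$ together with the quasiconformal structure of the holomorphic motion $\psi_\lambda$ to prevent $g$ from varying continuously on every open subset of $U$.
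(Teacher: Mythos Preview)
Your setup is exactly right: you define the same map $G(\lambda)=\psi_\lambda^{-1}\bigl(f_\lambda^n(v_\lambda)\bigr)$ as the paper does, and you correctly identify that the entire lemma reduces to showing $G$ is locally constant on the open set where it is defined. However, you leave the general case unresolved, saying only that ``a more delicate argument is required'' and gesturing at the open-mapping property of $F$ and the quasiconformal structure of $\psi_\lambda$. This is precisely the point, but you have not carried it out, and it is the whole content of the lemma.

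The paper closes this gap in one line by invoking Lemma~\ref{lem:qr} (from \cite{ABF}): for any holomorphic family of quasiconformal homeomorphisms $(\psi_\lambda)$ and any holomorphic map $g$, the composition $\lambda\mapsto\psi_\lambda^{-1}\circ g(\lambda)$ is, on each one-dimensional slice, either constant or a branched cover at each point (in particular, open). Since $G$ takes values in $\partial W$ on an open set and $\partial W$ has empty interior, $G$ cannot be open there, hence is constant. This dichotomy is exactly the ``open-mapping property together with the quasiconformal structure'' that you allude to, but it is a nontrivial fact (essentially that such compositions are quasiregular) and needs to be stated and used, not merely hoped for. Your argument for the totally disconnected case is fine, but is subsumed by this.

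A secondary remark: your elaborate program of showing $M_n=M$ via continuity and induction on the iteration level is unnecessary. Once $G$ is constant equal to $w_*\in\partial W$ on the connected open set where $f_\lambda^n(v_\lambda)$ is defined, one has $f_\lambda^n(v_\lambda)=\psi_\lambda(w_*)\in\partial W_\lambda$ there; the identity $F(\lambda)=\psi_\lambda(w_*)$ between two globally holomorphic functions of $\lambda$ then propagates by the identity principle, and the earlier iterates $f_\lambda^k(v_\lambda)$ for $k<n$ stay in $W_\lambda$ by the same reasoning applied inductively (or simply because the relation is analytic).
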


 The proof follows almost immediately from the following lemma, which will also be useful later on.
\begin{lem}[{\cite[Lemma 2.11]{ABF}}] \label{lem:qr}
	Let $(\psi_\lam :X\to X)_{\lam \in M}$ be a holomorphic family of quasiconformal homeomorphisms,
	such that $\psi_{\lam_0}=\id$ and $\dim M=1$. Let $g: M \to X$ be a holomorphic map and  $G(\lam):=\psi_\lam^{-1} \circ g(\lam)$. Then either $G$ is constant, or there are neighborhoods $U$ of $\lam_0$ in $M$ and $V$ of $G(\lam_0)$ in $X$ such that $G: U \to V$ is a branched cover, ramified only possibly at $G(\lam_0)$.
\end{lem}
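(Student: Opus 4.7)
The plan is to show that $G$ is a quasi-regular map in a neighborhood of $\lam_0$ and then apply Stoilow factorization. The quasi-regularity comes from computing a Beltrami-type identity that exploits two holomorphicity assumptions: $g$ is holomorphic in $\lam$, and $\psi_\lam(w)$ is holomorphic in $\lam$ for each fixed $w$.

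First, work in local holomorphic coordinates on $M$ near $\lam_0$ (identified with a disk in $\C$) and on $X$ near $G(\lam_0)=g(\lam_0)$, so that $\psi_\lam(G(\lam))=g(\lam)$ becomes a relation in $\C$. Differentiate this identity with respect to $\bar\lam$; using the chain rule for $\bar\lam$ applied to $\lam \mapsto \psi_\lam(G(\lam))$, and using $\partial_{\bar\lam}\psi_\lam\equiv 0$ (holomorphicity in $\lam$) together with $\partial_{\bar\lam} g = 0$, we obtain
\[
(\partial_w \psi_\lam)(G(\lam))\,G_{\bar\lam} \;+\; (\partial_{\bar w}\psi_\lam)(G(\lam))\,\overline{G_\lam}\;=\;0.
\]
Writing $\mu_\lam := \partial_{\bar w}\psi_\lam/\partial_w\psi_\lam$ for the Beltrami coefficient of $\psi_\lam$, this rearranges to $G_{\bar\lam} = -\mu_\lam(G(\lam))\,\overline{G_\lam}$. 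Since the family $\psi_\lam$ is quasiconformal with uniform dilatation on a neighborhood of $\lam_0$, there exists $k<1$ with $|\mu_\lam|\le k$. Hence $|G_{\bar\lam}|\le k\,|G_\lam|$, and the complex dilatation $\nu := G_{\bar\lam}/G_\lam$ of $G$ (wherever $G_\lam\neq 0$) satisfies $|\nu|\le k<1$. Thus $G$ is quasi-regular near $\lam_0$.

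Now apply the measurable Riemann mapping theorem to the Beltrami coefficient $\nu$: there is a quasiconformal homeomorphism $h$ from a neighborhood $U$ of $\lam_0$ onto its image, fixing $\lam_0$, solving $h_{\bar\lam}=\nu\,h_\lam$. The usual chain rule then gives that $F := G\circ h^{-1}$ is holomorphic on $h(U)$. So $G = F\circ h$ with $F$ holomorphic and $h$ a quasiconformal homeomorphism. If $F$ is constant, so is $G$, giving the first alternative. Otherwise $F$ is a non-constant holomorphic map with discrete critical set; shrinking $U$ if necessary, $F\colon h(U)\to V$ is a branched cover of degree $k\ge 1$, ramified at most at $h(\lam_0)$. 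Since $h$ is a homeomorphism, the composition $G = F\circ h\colon U\to V$ is again a branched cover of the same degree, ramified at most at $\lam_0$.

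The main subtlety is justifying the chain rule computation in Step~1, since $\psi_\lam(w)$ is only $W^{1,2}_{\rm loc}$ in $w$ (not smooth). However, the crucial simplifications $\partial_{\bar\lam}\psi_\lam=0$ and $\partial_{\bar\lam}g=0$ mean the chain rule only needs to be applied in the quasiconformal variable $w$ composed with a holomorphic map, which is standard (see e.g.\ the distributional chain rule for Sobolev functions composed with holomorphic maps). All other steps are standard applications of MRMT and of the local normal form of non-constant holomorphic maps.
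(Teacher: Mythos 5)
Your overall strategy (prove $G$ is quasiregular, then use Stoilow factorization and the local normal form of a non-constant holomorphic map) is a legitimate route, but it is not the one the paper relies on, and as written it has a genuine gap exactly at its core. The paper does not prove the lemma at all: it quotes it from \cite{ABF} (Lemma 2.11), and it explicitly remarks that ``one could prove that $G$ is even quasiregular (\cite{bert}), although we will not require this'' --- i.e.\ the quasiregularity of $G$ is treated there as a stronger, citable theorem, precisely because it is not a routine computation. Your Step 1 derives the Beltrami relation $G_{\bar\lambda}=-\mu_\lambda(G(\lambda))\overline{G_\lambda}$ only formally: to even write $G_\lambda$ and $G_{\bar\lambda}$ almost everywhere you need $G$ to be ACL or $W^{1,1}_{\mathrm{loc}}$, and a priori $G=\psi_\lambda^{-1}\circ g$ is merely continuous (note that $\psi_\lambda^{-1}(z)$ is in general \emph{not} holomorphic in $\lambda$, so no soft argument gives regularity of $G$). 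Moreover $\psi_\lambda$ is differentiable only a.e.\ in $w$ and its Beltrami coefficient $\mu_\lambda$ is defined only a.e., so the evaluation $\mu_\lambda(G(\lambda))$ along the graph of $G$ is not obviously well defined for a.e.\ $\lambda$. Your appeal to ``the distributional chain rule for Sobolev functions composed with holomorphic maps'' does not address this: in the identity $\psi_\lambda(G(\lambda))=g(\lambda)$ the inner map is the unknown $G$, not a holomorphic map, so the cited chain rule is not the relevant statement. In short, Step 1 presupposes the regularity of $G$ that is the actual content of the quasiregularity theorem; the remaining steps (MRMT straightening, factorization $G=F\circ h$, shrinking so that the only critical point of $F$ is at $h(\lambda_0)$) are fine.

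For comparison, the argument behind \cite[Lemma 2.11]{ABF} avoids all regularity questions by freezing the space variable: in local charts, for each fixed $w$ the map $\lambda\mapsto g(\lambda)-\psi_\lambda(w)$ is \emph{holomorphic}, and $G^{-1}(w)$ is exactly its zero set. If $G$ is not constant, none of these functions vanishes identically, so by Rouch\'e/the argument principle the number of zeros in a small disk around $\lambda_0$ is locally constant in $w$; this yields that $G$ is continuous, open, discrete and proper onto a small neighborhood $V$ of $G(\lambda_0)$, from which the branched-cover structure, ramified only possibly over $G(\lambda_0)$ after shrinking, follows (e.g.\ via the Stoilow-type factorization of continuous open discrete maps, applied to a topological, not analytic, object). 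If you want to keep your route, the honest fix is either to quote the quasiregularity of $G$ from the literature (as the paper indicates one may) or to replace Step 1 by an argument of this frozen-variable type; as it stands, your proof of quasiregularity is circular.
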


In fact, one could prove that $G$ is even quasiregular (\cite{bert}), although we will not require this.

\begin{proof}[Proof of Lemma \ref{lem:Gct}]
	Let $G(\la):=\psi_\la^{-1} \circ f_\la^n(v_\la)$. By the previous Lemma, 
	the map $G: M \to X$ is either locally constant (hence constant since $M$ is connected) or open. Since $G(\la) \in \partial W$ if and only if $f_\la^n(v_\la) \in \partial W_\la$,  and $\partial W$ has empty interior, the map $G$ cannot be open, and is therefore constant.
\end{proof}

The next lemma, though technical, is standard in rational dynamics. It will be used to locally follow holomorphically preimages of a marked point, 
up to passing to a finite branched cover in parameter space.  Since we work with non-rational maps, we give here a detailed proof.

\begin{lem}[Holomorphic dependence of preimages] \label{lem:marked}
	Let $S,X$ be Riemann surfaces, $U \subset X$ be a domain, and $F: S \times U \to X$ be a
	holomorphic map such that for all $\lam \in S$, the map $F(\lam, \cdot)$ is non-constant on $U$. Let $\gamma: S \to X$ be a holomorphic map, and let $(\lam_0,z_i) \in S \times U$ (where $z_i, 1 \leq i \leq N$ are $N$ distinct points in $U$) be such that $F(\lam_0,z_i)=\gamma(\lam_0)$. Then there is a neighborhood $V$ of $\lam_0$ in $S$, a finite branched cover $\pi: \D \to V$ and  holomorphic maps $\D \ni t \mapsto x_i(t) \in U$ such that $x_i(0)=z_i$ and
	for all $t \in \D$,
	$$F(\pi(t), x_i(t)) = \gamma \circ \pi(t).$$
\end{lem}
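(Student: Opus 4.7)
The approach is to apply the Weierstrass preparation theorem at each pair $(\lambda_0, z_i)$, uniformize via Puiseux's theorem, and then combine the individual parametrizations into a single branched cover $\pi: \D \to V$. Since the target of $\pi$ is the one-dimensional disc $\D$, we first reduce to the case $\dim S = 1$ by restricting $F$ and $\gamma$ to a one-complex-dimensional slice through $\lambda_0$. We identify a neighborhood of $\lambda_0$ via a chart with a disc $V \subset \C$ centered at $0 = \lambda_0$, choose local charts near $\gamma(0) \in X$ and near each $z_i \in U$, and set $G_i(\lambda, z) := F(\lambda, z) - \gamma(\lambda)$ in these local coordinates.

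For each $i$, we have $G_i(0, z_i) = 0$, and by hypothesis $z \mapsto G_i(0, z)$ is non-constant, so it has a zero of finite order $m_i \geq 1$ at $z_i$. The Weierstrass preparation theorem then yields, on a small bidisc around $(0, z_i)$, a factorization $G_i = u_i \cdot P_i$, where $u_i$ is non-vanishing and $P_i$ is a Weierstrass polynomial of degree $m_i$ in $(z - z_i)$ whose coefficients are holomorphic in $\lambda$ and vanish at $\lambda = 0$. Applying Puiseux's theorem to any irreducible branch of $\{P_i = 0\}$ through $(0, z_i)$ produces an integer $d_i \in \{1, \ldots, m_i\}$, some $\rho_i > 0$, and a holomorphic map $y_i : \D_{\rho_i} \to U$ with $y_i(0) = z_i$ and $G_i(s^{d_i}, y_i(s)) \equiv 0$. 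When $m_i = 1$, the implicit function theorem delivers this directly with $d_i = 1$.

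To produce a single $\pi$ that works for all $i$, set $d := \mathrm{lcm}(d_1, \ldots, d_N)$ and pick $\epsilon > 0$ small enough that the map $\pi : \D \to V$ defined in local coordinates by $\pi(t) := \epsilon t^d$ has image contained in the intersection of the various Weierstrass neighborhoods. For each $i$, I would then define
$$
x_i(t) := y_i\bigl(\epsilon^{1/d_i}\, t^{d/d_i}\bigr),
$$
with a fixed choice of $d_i$-th root of $\epsilon$; for $\epsilon$ small enough, $x_i$ takes values in $U$. The substitution $s = \epsilon^{1/d_i} t^{d/d_i}$ satisfies $s^{d_i} = \epsilon t^d = \pi(t)$, so the Puiseux identity $G_i(s^{d_i}, y_i(s)) = 0$ translates into $F(\pi(t), x_i(t)) = \gamma(\pi(t))$, and clearly $x_i(0) = y_i(0) = z_i$.

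The main obstacle is the bookkeeping required to reconcile Puiseux parametrizations of different ramification degrees $d_i$ at different preimages $z_i$ into one common branched cover $\pi$ of degree $d = \mathrm{lcm}(d_1, \ldots, d_N)$. Care is also needed because the Weierstrass factorizations are only valid on small bidiscs around each $(0, z_i)$, so $V$ and $\epsilon$ must be shrunk to ensure both that $\pi(t)$ lies in the domain of every Weierstrass factorization simultaneously and that each $x_i(t)$ stays within the chart around $z_i$. Nothing conceptually deep is needed beyond Weierstrass preparation and Puiseux's theorem, both of which apply in the present holomorphic setting.
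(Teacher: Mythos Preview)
Your proof is correct and takes a genuinely different route from the paper. You work locally at each preimage $z_i$ via Weierstrass preparation and Puiseux's theorem, obtaining individual parametrizations $\lambda = s^{d_i}$, $z = y_i(s)$, and then reconcile them by composing with a single power map $\pi(t) = \epsilon t^d$, $d = \mathrm{lcm}(d_1,\ldots,d_N)$. The paper instead argues globally: it considers the analytic set
\[
Z = \{(\lambda, y_1,\ldots,y_N) \in S \times U^N : F(\lambda,y_i)=\gamma(\lambda) \text{ for all } i\},
\]
shows that the irreducible component $Z_0$ through $(\lambda_0, z_1,\ldots,z_N)$ is one-dimensional, proves that the first-coordinate projection $\pi_S : Z_0 \to S$ is proper (hence a finite covering) over a punctured disk around $\lambda_0$, and then uniformizes that punctured Riemann surface by $\D^*$, extending across $0$. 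Your argument is more elementary and hands-on, needing only classical one-variable tools; the paper's argument handles all $N$ points simultaneously with less bookkeeping, and the degree of $\pi$ it produces is the intrinsic degree of the projection from $Z_0$ rather than a possibly larger $\mathrm{lcm}$. One minor remark: since $S$ is already assumed to be a Riemann surface, your opening reduction to $\dim S = 1$ is superfluous.
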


\begin{proof}
	Up to restricting $U$, we may assume without loss of generality that $F(\lam_0,\cdot)$ extends holomorphically in a neighborhood of $\overline{U}$ in $X$, and that for all $z \in \partial U$, $F(\la_0,z) \neq \gamma(\la_0)$.
	
	Let 
	$$
	Z:=\{(\lam,(y_1,\ldots, y_N)) \in S \times U^N: F(\lam,y_i)-\gamma(\lam)=0  \text{ for $1 \leq i \leq N$} \}
	$$
	and let $Z_0$ denote the irreducible component of $Z$ containing
	$(\lam_0, z_1, \ldots, z_N)$.
	Let $\pi_S:  (\lam, x_1, \ldots, x_N) \mapsto \lam$ denote the projection from $Z_0$ to $S$.
	 Then $Z_0$ is an analytic subset of $S \times U^N$ of complex dimension one. 
	Indeed, if  $Z_0$ had higher dimension, then $\pi_S^{-1}(\{\la_0\}) \cap Z_0$ would have positive dimension, which would contradict the assumption that $F(\la_0, \cdot)$ is non-constant on $U$.

	 The set of singular points of $Z_0$ is discrete (since it is a codimension at least 1 analytic subset of $Z_0$), and so is the set of critical points of the projection $\pi_S$ restricted to $Z_0$.
	Therefore, there exists a small disk $V \subset S$ containing $\lam_0$ such that 
	$Z_1^*:=Z_0 \cap \pi_S^{-1}(V \setminus \{\lam_0\})$ is smooth and $\pi_S: Z_1^* \to V \setminus \{\lam_0\}$ has no critical points.
	Up to taking a smaller $V$, we may also assume that for all $(\la,z) \in V \times \partial U$, $F(\la,z) \neq \gamma(\la)$. Then  the map ${ \pi_S}: Z_1^* \to V \setminus \{\lam_0\}$ is proper. 
	Indeed, let $K \subset V \setminus \{\la_0\}$ denote a compact set, and let $(\la_n, z_n) \in \pi^{-1}(K)$.
	Up to extracting a subsequence, we may assume that $(\la_n, z_n) \to (\la,z) \in (K \times \overline{U}) \cap Z_1^*$. But by our restriction on $V$, we have $z \in U$. This proves that $\pi_S^{-1}(K)$ is compact in $Z_1^*$, hence that $\pi_S: Z_1^* \to V \setminus \{\la_0\}$ is proper.	
	It is also surjective (because it is open and closed, and $S$ is connected).
	
	Since $\pi_S: Z_1^* \to V \setminus \{\lam_0\}$ is proper, surjective and has no critical points,  it is a covering map. Therefore, there exists a conformal isomorphism $h : \D^*  \to Z_1^*$ and a 
	degree $d \geq 1$ covering map $\pi: \D^* \to V \setminus \{\lam_0\}$ such that 
	$\pi_S \circ  h = \pi$. The map $h$ extends to a holomorphic map $h: \D \to S \times U^N$ 
	such that $h(0) = (\lam_0, z_1, \ldots, z_N)$ and the map $\pi$ extends to a holomorphic map $\pi: \D \to V$ with $\pi(0)=\lam_0$.
	For $1 \leq i \leq N$, let $\pi_i: S \times U^N \to U$ be the projection defined by 
	$\pi_i(\lam, x_1, \ldots, x_N) = x_i$.

	We can then set $x_i:=\pi_i \circ h$; it is straightforward to check that they have the desired properties.
\end{proof}

We now show that if a  singular value is mapped to $\partial  W_{\lambda_0}$ at a parameter $\lambda_0\in M$, the latter parameter can be perturbed in such a way that the singular value has  any prescribed behavior.
\begin{prop}[Shooting Lemma]\label{lem:shooting}
	Let $\{f_\la\}_{\la\in M}$ be a natural family of Ahlfors island maps.
	Let $\la_0\in M$ and $n\geq 0$ be such that a singular value $v_\la$ satisfies $f_{\lam_0}^n(v_{\la_0}) \in \partial W_{\lam_0}$ non persistently. 
	Let $\lambda \mapsto \gamma(\la)$ be a  holomorphic map  such that $\gamma(\la_0)$ is not Picard exceptional. Then we can find $\la'$ arbitrarily close to $\la_0$  such that
	$$f_{\lam'}^{n+2}(v_{\lam'}) = \gamma(\lam').$$
\end{prop}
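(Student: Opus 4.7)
The plan is to combine the Ahlfors island property with a Hurwitz-style limiting argument. Set $g(\la) := f_\la^n(v_\la)$, $p_0 := g(\la_0) \in \partial W_{\la_0}$, and $G(\la) := \psi_\la^{-1} \circ g(\la)$. Because $\psi_\la(\partial W) = \partial W_\la$, the non-persistence hypothesis together with Lemma~\ref{lem:Gct} forces $G$ to be non-constant in every neighborhood of $\la_0$ --- otherwise $g(\la) = \psi_\la(p_0)$ would belong to $\partial W_\la$ persistently, contradicting the hypothesis. Restricting to a one-dimensional sub-disk $D \ni \la_0$ on which $G|_D$ is non-constant, I may assume $\dim M = 1$, and then by Lemma~\ref{lem:qr}, $G : V \to V'$ is a branched cover of some degree $k \geq 1$ onto a neighborhood $V'$ of $p_0$.

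Next I produce approximate second preimages of $\gamma(\la_0)$ accumulating on $p_0$. Since $\gamma(\la_0)$ is not Picard exceptional, $f_{\la_0}^{-1}(\gamma(\la_0))$ is infinite, so I can pick $N$ non-critical preimages $y_1, \dots, y_N$ (with $N$ the island constant for $f_{\la_0}$) and enclose them in pairwise disjoint Jordan domains $\overline{D_i} \subset W$. Applying the $N$-island property to a shrinking sequence of open neighborhoods $U_m \searrow \{p_0\}$ intersecting $\partial W$, I obtain islands $\Omega_m \Subset U_m \cap W$ with $f_{\la_0}: \Omega_m \to D_{i_m}$ a conformal isomorphism; after extracting a subsequence, $i_m = i$ is constant. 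The unique $z_m \in \Omega_m$ with $f_{\la_0}(z_m) = y_i$ satisfies $f_{\la_0}^2(z_m) = \gamma(\la_0)$ and $z_m \to p_0$. Since $z_m$ and $y_i$ are both non-critical, $(f_{\la_0}^2)'(z_m) \neq 0$, so by Lemma~\ref{lem:marked} (in the unramified case $N=1$) each $z_m$ extends to a holomorphic function $z_m(\la)$ satisfying $f_\la^2(z_m(\la)) = \gamma(\la)$. Using the natural family formula $f_\la = \phi_\la \circ f_{\la_0} \circ \psi_\la^{-1}$ one may write $z_m(\la) = \psi_\la \circ (f_{\la_0}|_{\Omega_m})^{-1} \circ \phi_\la^{-1}(y_i(\la))$ where $y_i(\la)$ is the inverse branch of $f_\la$ near the fixed non-critical point $y_i$, so that the only $\la$-dependent restriction is independent of $m$ and all $z_m(\cdot)$ are defined on a common neighborhood $V$ of $\la_0$.

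Finally, normality and Hurwitz conclude. The family $\{z_m\}_m$ of holomorphic maps $V \to X$ is normal by Lemma~\ref{lem:normalf} (as $X$ is compact), so after extraction $z_m \to z_\infty$ uniformly on compacta of $V$, with $z_\infty(\la_0) = p_0 = g(\la_0)$. Working in a chart around $p_0$, consider $h_m(\la) := g(\la) - z_m(\la)$ converging uniformly to $h_\infty(\la) := g(\la) - z_\infty(\la)$, with $h_\infty(\la_0) = 0$. If $h_\infty \not\equiv 0$, Hurwitz's theorem supplies zeros $\la_m' \to \la_0$ of $h_m$, and at each such $\la_m'$ we have $f_{\la_m'}^{n+2}(v_{\la_m'}) = f_{\la_m'}^2(z_m(\la_m')) = \gamma(\la_m')$. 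The degenerate case $h_\infty \equiv 0$ (that is, $z_m \to g$ uniformly) is handled by observing that the discreteness of the fibers $f_\la^{-2}(\gamma(\la))$ forces $z_m(\la) = g(\la)$ eventually at each $\la$ where $g(\la) \in W_\la \cap f_\la^{-1}(W_\la)$, and by non-persistence such $\la$ are dense near $\la_0$, so the conclusion follows for any such $\la'$ close enough to $\la_0$.

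The main obstacle I anticipate is the degenerate case $h_\infty \equiv 0$: one needs to ensure that the open set $\{\la : g(\la) \in W_\la \text{ and } f_\la(g(\la)) \in W_\la\}$ accumulates on $\la_0$. The first condition is dense near $\la_0$ by the non-persistence hypothesis together with Lemma~\ref{lem:Gct}; the second, concerning the $(n+1)$-st iterate, requires an analogous application of Lemma~\ref{lem:qr} on the open set where $g(\la) \in W_\la$, combined with the fact (valid because $W_\la$ is open) that $\partial W_\la$ has empty interior.
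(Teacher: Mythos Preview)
Your overall strategy---produce via the island property a sequence $z_m\to p_0$ of $f_{\la_0}^2$-preimages of $\gamma(\la_0)$, follow them holomorphically as $z_m(\la)$, and apply Hurwitz to $g(\la)-z_m(\la)$---is a legitimate alternative to the paper's argument, which instead uses a \emph{single} island together with the topological lemma~\ref{lem:fp} (an open-mapping/degree argument showing that $H(\la):=f_\la^{n+1}(v_\la)$ must hit the moving preimage $z_0(\la)$ on a small Jordan domain in parameter space). The paper's route is shorter because it avoids any limiting process; your route trades Lemma~\ref{lem:fp} for Hurwitz.

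That said, two steps in your execution are faulty. First, Lemma~\ref{lem:normalf} does not apply: it needs a \emph{hyperbolic} target, while $X$ is compact. You do not actually need abstract normality, though: your own formula $z_m(\la)=\psi_\la\circ(f_{\la_0}|_{\Omega_m})^{-1}\circ\phi_\la^{-1}(y_i(\la))$ shows $\psi_\la^{-1}(z_m(\la))\in\Omega_m$, and since $\Omega_m\searrow\{p_0\}$ this yields $z_m(\la)\to\psi_\la(p_0)$ locally uniformly; the limit is holomorphic because $(\psi_\la)$ is a holomorphic motion. Second, your treatment of the degenerate case $h_\infty\equiv 0$ is broken: $g(\la)=f_\la^n(v_\la)$ has no reason to lie in the fiber $f_\la^{-2}(\gamma(\la))$, so the discreteness argument is circular. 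But once you know $z_\infty(\la)=\psi_\la(p_0)$, the identity $h_\infty\equiv 0$ reads $f_\la^n(v_\la)\equiv\psi_\la(p_0)\in\partial W_\la$, which is precisely the persistence you assumed away---so the degenerate case is vacuous and Hurwitz applies directly. (A minor point: it may be impossible to choose $N$ \emph{non-critical} preimages $y_i$ of $\gamma(\la_0)$; pass once to a branched cover for $y_i(\la)$ via Lemma~\ref{lem:marked}, as the paper does, and then your formula for $z_m(\la)$ still gives a common domain independent of $m$.)
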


Since Picard exceptional values are also asymptotic values, Proposition \ref{lem:shooting} applies in particular whenever $\gamma(\la_0) \notin S(f_{\la_0})$.
To prove Proposition~\ref{lem:shooting} we need the following lemma.

\begin{lem}[\cite{ABF}]\label{lem:fp} Let $V$ be a Jordan domain, and let $f,g$ be holomorphic functions in a neighborhood of $\overline{V}$. Suppose that  $g(\overline{V})\subset f(V)$ and  $g(\partial V)\cap f(\partial V)=\emptyset$. Then there exists $\lambda\in V$ such that $f(\lambda)=g(\lambda)$.
\end{lem}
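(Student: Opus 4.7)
My plan is to use the argument principle together with a homotopy contracting $g$ to a constant, the crucial step being a topological upgrade of the boundary disjointness hypothesis.

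If $f$ (or $g$) is constant, the condition $g(\overline V) \subset f(V)$ forces both to equal the same constant and the conclusion is immediate; assume henceforth that both are non-constant, so that $f(V)$ and $g(V)$ are bounded open subsets of $\mathbb{C}$, with $\partial f(V)$ non-empty, $\partial f(V) \subset f(\partial V)$, and $\partial g(V) \subset g(\partial V)$. Set $F := f - g$, holomorphic in a neighborhood of $\overline V$; the hypothesis $g(\partial V) \cap f(\partial V) = \emptyset$ forces $F \neq 0$ on $\partial V$, so by the argument principle it suffices to show that the winding number of $F|_{\partial V}$ around $0$ is at least $1$.

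The key observation is that the hypotheses in fact imply the stronger disjointness $g(\overline V) \cap f(\partial V) = \emptyset$. Indeed, $\partial g(V) \subset g(\partial V)$ is disjoint from $f(\partial V)$, so the connected set $f(\partial V)$ decomposes as a disjoint union of two relatively open pieces $f(\partial V) \cap g(V)$ and $f(\partial V) \cap (\mathbb{C} \setminus \overline{g(V)})$; by connectedness one of them is empty. The case $f(\partial V) \subset g(V)$ is impossible, for it would give $\partial f(V) \subset f(\partial V) \subset g(V) \subset f(V)$, contradicting $\partial f(V) \cap f(V) = \emptyset$. Hence $f(\partial V) \cap g(V) = \emptyset$, which combined with the hypothesis yields $g(\overline V) \cap f(\partial V) = \emptyset$.

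With this in hand, choose any base point $p_0 \in V$ and, using that $\overline V$ is homeomorphic to $\overline{\mathbb{D}}$, construct a continuous family $\phi_r : \overline V \to \overline V$ with $\phi_0 \equiv p_0$ and $\phi_1 = \id$ (for instance, pull back the radial retraction $z \mapsto rz$ on $\overline{\mathbb{D}}$ via a Carathéodory homeomorphism sending $p_0$ to $0$). Set $F_r := f - g \circ \phi_r$. Since $g(\phi_r(\overline V)) \subset g(\overline V)$ and the latter is disjoint from $f(\partial V)$ by the previous step, $F_r \neq 0$ on $\partial V$ for all $r \in [0,1]$, so the winding number of $F_r|_{\partial V}$ around $0$ is constant in $r$. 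At $r=0$ the holomorphic function $F_0 = f - g(p_0)$ has at least one zero in $V$, since $g(p_0) \in g(V) \subset f(V) \setminus f(\partial V)$ admits at least one preimage under $f$ in $V$. Hence the winding number is positive, and so is that of $F = F_1$; the argument principle then supplies $\lambda \in V$ with $f(\lambda) = g(\lambda)$.

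The heart of the argument is the topological upgrade in the second paragraph: without the stronger disjointness, the homotopy $g \circ \phi_r$ could cross $f$ somewhere on $\partial V$, breaking winding-number continuity. Once that upgrade is secured, the rest is a standard Rouché-style degree computation.
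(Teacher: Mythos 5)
Your proof is correct. Note that this paper does not actually prove the lemma --- it is imported from \cite{ABF} with a citation and no argument --- so there is no in-paper proof to compare with; your homotopy/winding-number argument is a complete self-contained proof of the expected Rouché--argument-principle type, and its genuinely substantive step, the upgrade from $g(\partial V)\cap f(\partial V)=\emptyset$ to $g(\overline V)\cap f(\partial V)=\emptyset$ via $\partial g(V)\subset g(\partial V)$, connectedness of $f(\partial V)$, and the impossibility of $\partial f(V)\subset f(V)$, is exactly what is needed to keep the contraction $F_r=f-g\circ\phi_r$ nonvanishing on $\partial V$.

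Two small points to tidy. First, your dispatch of the degenerate case is misstated: if $g\equiv c$ while $f$ is non-constant, nothing forces $f$ to be constant; but the conclusion is still immediate, since $c\in g(\overline V)\subset f(V)$ already provides $\lambda\in V$ with $f(\lambda)=c=g(\lambda)$ (and if $f$ is constant the two hypotheses are incompatible). Second, since $\partial V$ is only a Jordan curve and may fail to be rectifiable, the two invocations of the argument principle (that $F_0=f-g(p_0)$ having a zero in $V$ and none on $\partial V$ gives nonzero winding, and that nonzero winding of $F_1$ forces a zero in $V$) should be read through the Carathéodory homeomorphism $h:\overline{\mathbb{D}}\to\overline V$ you already introduce: pull back by $h$, compute winding numbers on circles $\{|z|=\rho\}$ with $\rho$ close to $1$ where $F_i\circ h\neq 0$, and for the second direction use that a function nonvanishing on the contractible set $\overline V$ has null-homotopic boundary loop in $\mathbb{C}^*$. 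With these routine adjustments (and a fixed parametrization of $\partial V$ throughout, so orientations are consistent) the argument is complete.
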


\begin{proof}[Proof of Proposition~\ref{lem:shooting}]

	First, we pick an arbitrary one-dimensional slice containing $\lam_0$ in the parameter space $M$ on which $\lam \mapsto f_\lam^n(v(\lam))$ is not constant, and we identify $M$ with $\D(\lam_0,1) \subset \C$ in the rest of the proof.
	By assumption $f_\la=\varphi_\la\circ f\circ \psi_\la^{-1}$ and we may assume without loss of generality that $\varphi_{\la_0}=\psi_{\la_0}={\rm Id}$ and hence $f=f_{\la_0}$. Let $x:=f_{\lam_0}^n(v_{\lam_0})$, and hence, by assumption, 
	$x \in \partial W$.

	Let $N \in \N$ be such that $f$ (and therefore each map $f_\lam$) have the $N$-island property.
	Since by assumption $\gamma(\lam_0)$ has infinitely many preimages, let $z_0, \ldots, z_N$ denote $N+1$ such distinct preimages.
	We apply Lemma \ref{lem:marked} to the map $F(\lam,z):=f_\la(z)$ and to the $z_i$, thus there exists a 
	branched cover $\pi: \D \to V$,  with $\pi(0)=\lambda_0$, where $V$ is a neighborhood of $\lam_0$, and holomorphic maps $t \mapsto x_i(t)$ on $\D$ such that $f_{\pi(t)}(x_i(t))=\gamma \circ \pi(t)$, and $x_i(0)=z_i$.
	In other words, up to replacing the family {$\{f_\lam\}_{\lam \in V}$} by the family $\{f_{\pi( t)}\}_{ t \in \D}$, 
	we may assume that each preimage $z_i(\la)$ moves holomorphically,  satisfying $f_\lam(z_i(\lam))=\gamma(\lam)$. To keep notations light, we will still denote 
	this reparametrized family by $\{f_\lam\}_{\lam \in V}$. Let $D_i$ ($0 \leq i \le N+1$) be Jordan domains with pairwise disjoint closures each containing $z_i$, and let $\delta>0$ be small enough 
	that for all $0 \leq i \leq N$ and $\la \in \D(\lam_0, \delta)$, we have $z_i(\lam) \in D_i$.

	Decreasing $\delta$ if necessary, the function $G(\la):=\psi_\la^{-1} ( f_\la^n(v_\la))$ is open on $\D(\la_0,\delta)$ by Lemma \ref{lem:qr},
	and  $G(\lam_0)=x\in\partial W$.
	It follows that $G(\D(\la_0,\delta))$ contains a disk $\Delta \subset X$ of $d_X$-radius say $\epsilon>0$ centered at $x$.
	By the island property, there exists $0 \leq i \leq N$ and $U \Subset \Delta \cap W$ such that $f: U \to D_i$ is a conformal isomorphism. Up to relabeling, we will assume without loss of generality that $i=0$.

	
	Since $U$ is contained in  the image of $G$, we let $V_1$ denote a connected component of $G^{-1}(U)$ inside $\D(\la_0,\delta)$. If $D_0$
	(and therefore $U$) is small enough, then $V_1$ is a Jordan domain as well.
	Let us now define $H(\la):= f_\la^{n+1}(v_\la)$. Our goal is to show that $\overline{z_0(V_1)} \subset H(V_1) $ so that Lemma \ref{lem:fp} applied to the maps $\lam \mapsto z_0(\lam)$ and $H$ gives the result. 
	
	In order to see this we write
	\[
	f_\la^{n+1}(v_\la) = \varphi_\la \circ f_{\la_0} \circ \psi_\la^{-1} \circ f_\la^n (v_\la) = \varphi_\la \circ f_{\la_0} \circ G(\la),
	\]
	and therefore
	\[
	H(V_1) = \varphi_\la ( f_{\la_0} (G(V_1)))= \varphi_\la(f_{\la_0}(U))=\varphi_\la(D_0).
	\]
	Now since $\delta$ can be taken arbitrarily small, the values of $\la$ can be arbitrarily close to $\la_0$ and therefore $\varphi_\la$ is arbitrarily close to the identity. It follows that $H(V_1)=\varphi_\la(D_0) \simeq D_0$, while $z_0(V_1) \subset z_0(\overline{\D(\la_0,\delta)}) \subset D_0$.  Moreover, $\partial z_0(\overline{\D(\la_0,\delta)}) $ separates the boundaries of these two sets, so the hypotheses of Lemma \ref{lem:fp} can be applied. This gives the existence of $\la'$ arbitrarily close to $\la_0$ such that $f_{\la'}^{n+1}(v_{\la'})=z_0(\la')$, and since $f_{\la'}(z_0({\la'})) = \gamma(\la')$, we do have 
	$f_{\la'}^{n+2}(v_{\la'})=\gamma(\la')$.

\end{proof}

 With the tools above, we can give some additional characterizations of activity of singular values, which will be usefull to prove approximation theorems in the next section.

\begin{prop}[Active singular values]\label{prop:activity}
	A singular value  $v(\lam)$ is active at $\lam_0$ if and only if  one of the following three cases occurs:
	\begin{enumerate}
		\item There exists $n \geq 0$ such that $f_{\la_0}^n(v(\la_0))\in\partial W_{\la_0} $, non persistently.
		\item  There exists an injective sequence of parameters $\lambda_k\ra\lambda_0$,  such that for some sequence of integers $n_k\ra\infty$, 
		$$
		f_{\la_k}^{n_k}(v(\la_k))\in\partial W_{\la_k} .
		$$
		
		\item There exists a neighborhood $U$ of $\lambda_0$ such that the family $(\lam \mapsto f_\la^n(v_\la))_{n \in \N}$ is well defined and not normal in $U$. This case can only occur if $f$ is exceptional.
	\end{enumerate}
\end{prop}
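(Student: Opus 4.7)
The proof splits into the easy direction---each of (1), (2), (3) implies activity---and the harder converse, together with the final claim that (3) forces $f$ to be exceptional. For the easy direction: in case (1) I apply Lemma~\ref{lem:qr} together with Lemma~\ref{lem:Gct} to $G(\la) := \psi_\la^{-1}(f_\la^n(v_\la))$. Non-persistence makes $G$ non-constant, hence open near $\la_0$, so the image of every neighborhood of $\la_0$ meets both $W$ and $X\setminus\overline{W}$; this produces nearby parameters where the orbit continues past step $n$ (defeating case (a) of passivity) and others where it terminates (defeating case (b)). In case (2), the sequence $\la_k \to \la_0$ with $n_k \to \infty$ and $f_{\la_k}^{n_k}(v_{\la_k}) \in \partial W_{\la_k}$ immediately destroys well-definedness on every neighborhood (killing (b)), and since the orbits have length at least $n_k \to \infty$ this also rules out (a). Case (3) is definitional: well-definedness rules out (a) and non-normality rules out (b).

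For the converse, assume $v$ is active and that (1) and (2) both fail; the goal is (3). From failure of (1) and activity, the orbit of $v_{\la_0}$ lies in $W_{\la_0}$ for every $n$: a persistent landing on $\partial W_{\la_0}$ would give case (a), and any escape into $X\setminus\overline{W_{\la_0}}$ would persist in a neighborhood (by continuity of $\la\mapsto f_\la^n(v_\la)$ and of $W_\la$), again yielding case (a). To extend this well-definedness to a whole neighborhood of $\la_0$, I argue by contradiction: if $\la_k \to \la_0$ has its orbit first escaping $W_{\la_k}$ at step $n_k$, then $n_k \to \infty$ (continuity at $\la_0$ rules out bounded $n_k$), and failure of (2) forces $f_{\la_k}^{n_k}(v_{\la_k}) \in X\setminus\overline{W_{\la_k}}$. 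Along a holomorphic path $\gamma$ in $M$ from $\la_0$ to $\la_k$, let $t^*_k := \sup\{t : f_{\gamma(s)}^{n_k}(v_{\gamma(s)}) \in W_{\gamma(s)} \text{ for all } s\leq t\}$; continuity forces $t^*_k < 1$ and $f_{\gamma(t^*_k)}^{n_k}(v_{\gamma(t^*_k)}) \in \partial W_{\gamma(t^*_k)}$, producing parameters $\la^*_k := \gamma(t^*_k) \to \la_0$ with $n_k \to \infty$, contradicting failure of (2). Hence the family is well-defined on a neighborhood $U$ of $\la_0$, and by activity it cannot be normal at $\la_0$, yielding (3).

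Finally, to show (3) implies $f$ is exceptional, I argue by contrapositive. If $f$ is non-exceptional, then by Lemma~\ref{lem:exceptional} the set $W_\infty(f)$ is hyperbolic, and the well-definedness of the family means each $\la \mapsto \psi_\la^{-1}(f_\la^n(v_\la))$ is a uniformly $K$-quasi-regular map from a neighborhood of $\la_0$ into $W$; the quasi-regular version of Montel's theorem (extending Lemma~\ref{lem:normalf}) then gives normality of this family, and composing back with the holomorphic motion $\psi_\la$ yields normality of $(\la\mapsto f_\la^n(v_\la))_n$, contradicting (3). The main obstacles I anticipate are the intermediate-value step (picking the connecting path in $M$ and tracking the domain on which the $n_k$-th iterate is defined, with the supremum argument producing the $\partial W$-crossing) and this final Montel step, which requires a quasi-regular Montel-type statement targeting a hyperbolic Riemann surface and careful verification that the conjugated orbits actually land in such a hyperbolic set.
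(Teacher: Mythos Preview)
Your treatment of the easy direction and of the converse implication (deriving case (3) from activity together with the failure of (1) and (2)) is essentially the paper's argument, reorganized. The path/supremum argument you use to locate a parameter where the orbit touches $\partial W$ is just a connectedness argument in disguise: the paper obtains the same conclusion by noting that $G_n(V)$ is connected, meets $W$ (by the negation of passivity~(a)), and meets $X\setminus W$ (by failure of well-definedness), hence meets $\partial W$. One minor sloppiness: at your supremum time $t_k^*$ the orbit may first touch $\partial W$ at some step $m_k<n_k$ rather than at $n_k$; but since $\la_k^*\to\la_0$ and the orbit at $\la_0$ stays in $W$ forever, continuity still forces $m_k\to\infty$, so case~(2) is produced and the contradiction survives.

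The genuine gap is in your final step, showing that case~(3) forces $f$ to be exceptional. You want to apply a Montel-type theorem to the conjugated maps $g_n(\la):=\psi_\la^{-1}\!\bigl(f_\la^n(v_\la)\bigr)$, using that $W_\infty(f)$ is hyperbolic when $f$ is non-exceptional. But the $g_n$ do \emph{not} take values in $W_\infty(f)$: well-definedness only gives $g_n(\la)\in W$, and a short computation shows $f(g_n(\la))=\varphi_\la^{-1}\!\bigl(f_\la^{n+1}(v_\la)\bigr)$, which is not $g_{n+1}(\la)$, so the forward $f$-orbit of $g_n(\la)$ need not remain in $W$ at all. And $W$ itself is typically \emph{not} hyperbolic for non-exceptional maps: already for a transcendental meromorphic map with a non-omitted pole one has $W=\C$. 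So no target hyperbolic surface is available, and the quasi-regular Montel argument does not get off the ground.

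The paper closes this gap differently. First one observes that $X=\rs$ (if $X$ were hyperbolic the family would be normal by Lemma~\ref{lem:normalf}; tori are excluded since their endomorphisms have no singular values). If $f$ is non-exceptional, Lemma~\ref{lem:boundary} gives three distinct points $z_1,z_2,z_3$ with $f^N(z_i)=y\in\partial W$. Using Lemma~\ref{lem:marked} (passing to a branched cover in parameter space), these can be continued to holomorphic maps $x_i(t)$ with $f_{\pi(t)}^N(x_i(t))=\psi_{\pi(t)}(y)\in\partial W_{\pi(t)}$. Now apply Montel's theorem with three \emph{moving} targets: the non-normal family $\{t\mapsto f_{\pi(t)}^n(v_{\pi(t)})\}$ must hit one of the $x_i$, giving $f_{\la_1}^{N+n}(v_{\la_1})\in\partial W_{\la_1}$ and contradicting well-definedness. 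The key idea you are missing is that the ``avoided set'' depends on $\la$, so one needs moving targets rather than a fixed hyperbolic target.
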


\begin{proof}[Proof of Proposition~\ref{prop:activity}]
	Taking the formal negation of the definition of passivity, we obtain that $v(\lam)$ is active at $\lam_0$ if and only if both of the following conditions are satisfied for all neighborhood $V$ of $\lam_0$: 
	\begin{enumerate}
		\item[(a)] for all $n \geq 0$, there exists $\lam \in V$ such that $f_\lam^n(v_\lam) \in W_\lam$; and
		\item[(b)] the family of holomorphic maps $\{\lam \mapsto f_\lam^n(v_\la) \}$ is either not well-defined on $V$, or it is well-defined but non-normal.
	\end{enumerate}

	It is clear that  condition (3) implies both (a) and (b), and that conditions (1) and (2) each imply (b).
	Let us prove that (1) also implies (a). Assume that $f_{\la_0}^n(v(\la_0)) \in \partial W_{\la_0}$ non-persistently. Let $G(\la):=\psi_\la^{-1} \circ f_{\la}^n(v(\la))$. 
	Since $W_\la:=\psi_\la(W)$, we have $G(\la) \in W \Leftrightarrow  f_{\la}^n(v(\la)) \in W_\la$, 
	and $G(\la) \in \partial W \Leftrightarrow  f_{\la}^n(v(\la)) \in \partial W_\la$.
	By Lemma \ref{lem:qr}, the map $G$ is either open or constant near $\la_0$; and $G(\la)$ is non-constant since by assumption $f_{\la_0}^n(v(\la_0)) \in \partial W_{\la_0}$ non-persistently.
	Therefore, there exists $\la \in V$ such that $G(\la) \in W$, hence $ f_{\la}^n(v(\la)) \in W_\la$.
	Now that we know that (1) implies (a), it is clear that (2) also implies (a). We have therefore proved 
	that $v(\la)$ is active at $\la_0$ if one of the three cases (1), (2) or (3) occurs.  Let us now prove that case (3) can only occur if $f_{\la_0}$ is exceptional.

	Suppose that (3) holds. 	Then  $X$ cannot be hyperbolic by Lemma \ref{lem:normalf}, and therefore  $X=\rs$ or a complex torus. But endomorphisms of complex tori have no singular values by Hurwitz's formula, so this last possibility is in fact excluded; therefore, $X=\rs$. 
	
	Assume for a contradiction that $f_{\la_0}$ is not exceptional. Then in particular $\bigcup_{n \geq 0} f_{\la_0}^{-n}(\partial W)$ is infinite, by Lemma \ref{lem:boundary}, so there exists $z_1, z_2, z_3 \in \rs$ three distinct points such that $f_{\la_0}^N(x_1)=f_{\la_0}^N(x_2)=f_{\la_0}^N(x_3)=:y\in \partial W_{\la_0}$ for some $N \geq 1$. Let $D \subset M$ be a one-dimensional disk passing through $\lam_0$ such that 
	$\{\la \mapsto f_\la^n(v_\lam): n \in \N\}$ is still well-defined but non-normal on $D$.  By Lemma \ref{lem:marked} applied with $F(\lam,z):=f_\lam^N(z)$ and $\gamma(\lam):=\psi_\lam(y)$, there exists a neighborhood $V$ of $\lam_0$ in $D$, a branched cover
	$\pi: \D \to V$ and holomorphic maps $x_i: \D \to \rs$ such that for all $t \in \D$,
	$$f_{\pi(t)}^N(x_i(t)) = \psi_{\pi(t)}(y).$$
	The family $\{t \mapsto f_{\pi(t)}^n(v_{\pi(t)}) : n \in \N \}$ is non-normal on $\D$, 
	so by Montel's theorem it cannot omit the three moving values $x_1(t), x_2(t), x_3(t)$; therefore, there exists $t_1 \in \D$ and $n \in \N$  such that say $f_{\pi(t_1)}^n(v_{\pi(t_1)})=x_1(t_1)$, 
	which means that $f_{\lam_1}^{N+n}(v_{\lam_1}) \in \partial W_{\lam_1}$, where $\lam_1:=\pi(t_1)$.
	But this contradicts the assumption that $\{\lam \mapsto f_\lam^n(v_\lam) : n \in \N\}$ is well-defined on $D$, hence on $V$. Therefore, $f$ is exceptional. 
	
	Conversely, assume that both (a) and (b) hold. There are two possibilities: 
	first, if there exists a neighborhood $V$ such that $\{\lam \mapsto f_\lam^n(v_\la) \}$ is well-defined but not normal, then we are in case (3).
	Assume from now on that this is not the case, and let $G_n(\lam):=\psi_\lam^{-1} \circ f_\la^n(v_\lam)$ as above.  Then,  for all neighborhood $V$ of $\lam_0$, there exists $n \in \N$ such that 
	$G_n(V) \cap W_{\la_0} \neq \emptyset$  (by (a)) and $G_n(V) \cap (X \setminus W_{\la_0}) \neq \emptyset$ (because $\{\lam \mapsto f_\lam^n(v_\la) \}$ is not well defined), so that $G_n(V) \cap \partial W_{\la_0} \neq \emptyset$.  
	 By considering a basis of neighborhoods $(V_k)_{k \in \N}$ of $\lam_0$, we obtain a sequence $\lam_k \to \lam_0$ (not necessarily injective) and a sequence of integers $(n_k)_{k \in \N}$ (not necessarily unbounded)
	such that $f_{\lam_k}^{n_k}(v_{\lam_k}) \notin W_{\lam_k}$.
	If the sequence $(n_k)$ is bounded, then up to extraction it is constant equal to some $N \in \N$; and by continuity we have $f_{\lam_0}^N(v_{\lam_0}) \in \partial W$. By (a) this relation is not persistent on $M$, and so we are in case (1).

	If the sequence $(n_k)$ is unbounded, then up to extraction we can assume that it is strictly increasing. Then the sequence $(\lam_k)$ must be injective, and so we are in case (2).

\end{proof}

\subsection{Density Theorems}
In this section we prove  that given the activity locus $\AA(v_\la)$ of a singular value $v_\la$, parameters for which $v_\la$ has a Misiurewicz relation and parameters  for which the orbit of $v_\la$ lands on the boundary of $W_\la$ are dense in $\AA(v_\la)$. We also show that $\AA(v_\la)$ is nowhere locally  contained in a proper analytic subset of $M$.

\begin{defi}[Misiurewicz relation]
	Let $\{f_\lam\}_{\lam \in M}$ be a natural family of holomorphic maps, and $\lam_0 \in M$.	
	 We say that $f_{\lam_0}$ has a Misiurewicz relation if there exists 
	a singular value $v_{\lam_0}$, $n \in \N$  and a repelling periodic point $z_{\lam_0}$ such that   $f_{\lam_0}^n(v_{\lam_0})=z_{\lambda_0}$.
\end{defi}


We say that a Misiurewicz relation is persistent if it holds on a parameter neighborhood of $\lambda_0$.

%

\begin{lem}[Misiurewicz relations imply activity]\label{lem:misiuimpliesactive}
	Let $\{f_\lam\}_{\lam \in M}$ be a natural family of Ahlfors island  maps.	Let $\lam_0\in M$ be such that $f_{\lam_0}$ has a Misiurewicz relation, i.e. 	there exists $v_{\lam_0} \in S(f_{\lam_0})$ and $n \in \N$ such that 
	$f_{\lam_0}^n(v_{\lam_0})$ is a repelling periodic point, and this relation is not persistent.
	Then $v_\lam$ is active at $\lam_0$.
\end{lem}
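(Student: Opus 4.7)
The plan is to argue by contradiction: assume $v_\lam$ is passive at $\lam_0$ and derive a contradiction with the non-persistent Misiurewicz relation $f_{\lam_0}^n(v_{\lam_0}) = z_{\lam_0}$, where $z_{\lam_0}$ is a repelling periodic point of period $p$ and multiplier $\rho_0 := (f_{\lam_0}^p)'(z_{\lam_0})$, $|\rho_0| > 1$. By Definition~\ref{defn:active singular values}, passivity on a neighborhood $V$ of $\lam_0$ splits into two sub-cases: either (a) some fixed iterate satisfies $f_\lam^m(v_\lam) \in X \setminus W_\lam$ for all $\lam \in V$, or (b) the whole orbit sequence $\{\lam \mapsto f_\lam^k(v_\lam)\}_{k \in \N}$ is well-defined and normal on $V$.

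Case (a) is ruled out directly. Since $z_{\lam_0}$ is a repelling periodic point, the entire forward orbit $\{f_{\lam_0}^j(v_{\lam_0})\}_{j \geq 0}$ is contained in $W_{\lam_0}$: for $j < n$ this is necessary for the relation $f_{\lam_0}^n(v_{\lam_0}) = z_{\lam_0}$ to make sense, and for $j \geq n$ the iterates lie on the periodic cycle. Evaluating the persistence condition at $\lam = \lam_0$ would give $f_{\lam_0}^m(v_{\lam_0}) \in X \setminus W_{\lam_0}$, contradicting the preceding observation.

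For case (b), I would first apply the implicit function theorem to $f_\lam^p(z) = z$ to extend $z_{\lam_0}$ holomorphically to a family of repelling periodic points $z_\lam$ with multipliers $\rho(\lam)$, $|\rho(\lam)| > 1$, on a smaller neighborhood of $\lam_0$. By the parametric version of Koenigs' linearization theorem, there is a neighborhood $\Omega$ of $z_{\lam_0}$ in $X$ and a holomorphic family of linearizers $\phi_\lam : \Omega \to \C$ with $\phi_\lam(z_\lam) = 0$ satisfying the functional equation $\phi_\lam(f_\lam^p(w)) = \rho(\lam)\, \phi_\lam(w)$ whenever both $w$ and $f_\lam^p(w)$ lie in $\Omega$. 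Set $\tilde\Psi(\lam) := \phi_\lam(f_\lam^n(v_\lam))$: this is holomorphic near $\lam_0$, vanishes at $\lam_0$, and is not identically zero by the non-persistence assumption. Since $f_{\lam_0}^{n+kp}(v_{\lam_0}) = z_{\lam_0}$ for every $k$, the equicontinuity at $\lam_0$ provided by normality yields, for every $\eps > 0$, a $\delta > 0$ such that $f_\lam^{n+kp}(v_\lam)$ lies in the ball of radius $\eps$ around $z_{\lam_0}$ for all $k \in \N$ and all $\lam \in D(\lam_0, \delta)$. Choosing $\eps$ so that this ball is compactly contained in $\Omega$, the functional equation can be iterated freely to give
$$\phi_\lam(f_\lam^{n+kp}(v_\lam)) = \rho(\lam)^k\, \tilde\Psi(\lam)$$
for all $k$ and all $\lam \in D(\lam_0, \delta)$. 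The left-hand side is uniformly bounded, but the right-hand side is unbounded at any $\lam$ with $\tilde\Psi(\lam) \neq 0$; such $\lam$ exist arbitrarily close to $\lam_0$ by non-persistence. This contradiction rules out case (b) and completes the proof.

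The main technical obstacle is the parametric Koenigs linearization: one must ensure $\phi_\lam$ is defined on a $\lam$-independent domain $\Omega$ with holomorphic joint dependence on $(\lam, w)$. This is standard via $\phi_\lam(w) = \lim_{k \to \infty} \rho(\lam)^{-k}\,(\xi_\lam \circ f_\lam^{kp}(w))$ in a holomorphically-varying local chart $\xi_\lam$ at $z_\lam$, but it is the place where one uses that $f_\lam^p$ is locally a holomorphic diffeomorphism at $z_\lam$ depending holomorphically on $\lam$. The role of normality is then precisely to confine the full forward orbit of $f_\lam^n(v_\lam)$ under $f_\lam^p$ to $\Omega$, enabling indefinite iteration of the functional equation and producing the desired contradiction between bounded and exponentially large growth.
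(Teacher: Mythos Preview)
Your proposal is correct and follows essentially the same route as the paper's proof: both continue the repelling periodic point holomorphically, invoke the parametric Koenigs linearization, and use the non-persistence of the relation to produce exponential growth $\rho(\lam)^k$ against a quantity that must stay bounded. The paper phrases this as a direct failure of equicontinuity of $\{\lam \mapsto f_\lam^{n+kp}(v_\lam)\}$ at $\lam_0$, whereas you argue by contradiction from assumed normality, but the content is identical; your treatment of case~(a) and of the domain on which the functional equation can be iterated is in fact slightly more explicit than the paper's.
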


\begin{proof}
	By definition of activity, we may assume without loss of generality that 
	there exists a neighborhood $V$ of $\lam_0$ on which $\{\lam \mapsto f_\lam^{ k}(v_\lam): k \in \N\}$
	are well-defined (otherwise, $v_\la$ is active at $\lam_0$ and we are done).
	Then the proof is the same as in the classical case of e.g. rational maps.
	We reproduce it here for the convenience of the reader.
	
	Let $p$ denote the period of the repelling cycle.
	There exists a neighborhood $U$ of $\lam_0$ such that the repelling periodic point $z_{\lam_0}=f_{\lam_0}^n(v_{\lam_0})$ moves holomorphically over $U$ as $\lam \mapsto z_\lam$, and remains repelling. 
	Moreover, there exists $r>0$ such that the cycle of $z_\lam$ is linearizable on $\D(z_\lam,r)$, that is, there exists local biholomorphisms $\zeta_\lam: \D(0,r) \to W_\lam$ depending holomorphically on $\lam$, such that $\zeta_\lam(0)=z_\lam$ and $f_\lam^p \circ \zeta_\lam(z)=\zeta_\lam(\rho_\lam z)$, where $\rho_\lam$ is the multiplier of the repelling cycle.
	Let $u(\lam):=\zeta_\lam^{-1}(f_\lam^n(v_\lam))$.
	Then $f_\lam^{n+kp}(v_\lam)=f_\lam^{kp} \circ \zeta_\lam(u(\lam))=\zeta_\lam( \rho_\lam^k u(\lam) )$.
	But since by assumption, $u(\lam_0)=0$ and $u$ does not vanish identically and $|\rho_\la|>1$, it is clear that the family $\{\lam \mapsto f_\lam^{n+kp}(v_\lam) \}_{k \in \N}$ is not normal at $\lam_0$, hence that $v_{\lam}$ is indeed active at $\lam_0$. 
 \end{proof}

\begin{lem}[Activity loci are not contained in analytic subsets]\label{lem:perturbA}
	Let $\{f_\lam\}_{\lam \in M}$ be a natural family of Ahlfors island  maps, and let $\mathcal{A}(v_\lam)$
	be the activity locus of a singular value $v_\lam$. Then $\mathcal{A}(v_\lam)$ is nowhere locally contained in a proper analytic subset of $M$. More precisely, if $\lam_0 \in \mathcal{A}(v_\lam) \cap H$, where $H \subset M$ is a proper analytic subset, then for every neighborhood $U$ of $\lam_0$ in $M$, $U \cap (\mathcal{A}(v_\lam) \setminus H) \neq \emptyset$.
\end{lem}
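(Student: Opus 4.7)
My plan is to produce a parameter $\la' \in U \setminus H$ satisfying a Misiurewicz relation, then invoke Lemma~\ref{lem:misiuimpliesactive} to conclude that $\la' \in \mathcal{A}(v_\la)$. The main tool is the Shooting Lemma (Proposition~\ref{lem:shooting}), which will be applied on a one-dimensional slice $D \subset M$ chosen generically so as to avoid~$H$. The first step is to apply Proposition~\ref{prop:activity}, which distinguishes three cases for the activity of $v_\la$ at $\la_0$. In cases (1) and (2) one locates $\la_1 \in U$ and $N \in \N$ with $f_{\la_1}^N(v_{\la_1}) \in \partial W_{\la_1}$, the relation being non-persistent on $M$: for case (2) this comes from Lemma~\ref{lem:Gct}, since global persistence of $f_\la^{n_k}(v_\la) \in \partial W_\la$ would place $v_\la$ in case~(i) of passivity at $\la_0$, contradicting activity.

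Next I would choose a one-dimensional complex disk $D \subset M$ through $\la_1$ with two generic properties: (a) $D \not\subset H$, so that $D \cap H$ is discrete in $D$; and (b) the map $G(\la) := \psi_\la^{-1}(f_\la^N(v_\la))$ is non-constant on $D$, so that $f_\la^N(v_\la) \in \partial W_\la$ remains non-persistent along $D$ (which makes sense by Lemma~\ref{lem:qr}). Each condition fails only on a proper Zariski-closed subset of the tangent space at $\la_1$, so both can be enforced simultaneously. I then apply the Shooting Lemma to the restricted natural family $\{f_\la\}_{\la \in D}$ with target $\gamma(\la) = z(\la)$, the local holomorphic motion of a repelling periodic point of $f_{\la_1}$, which exists by Theorem~\ref{th:repdense} and is not Picard-exceptional by Lemma~\ref{lem:densepreimages}. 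This yields $\la' \in D$ arbitrarily close to $\la_1$ with $f_{\la'}^{N+2}(v_{\la'}) = z(\la')$; necessarily $\la' \neq \la_1$, since $f_{\la_1}^{N+1}(v_{\la_1})$ is undefined. The Misiurewicz relation cannot be persistent on $M$ — else the iterate family of $v_\la$ would be eventually periodic and hence normal at $\la_0$, contradicting activity — so Lemma~\ref{lem:misiuimpliesactive} gives $\la' \in \mathcal{A}(v_\la)$. The discreteness of $D \cap H$ near $\la_1$ then lets us choose $\la'$ outside $H$, giving the required parameter.

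Case (3), which only arises for exceptional $f_{\la_0}$ with the iterate family well-defined and non-normal on a neighborhood of $\la_0$, does not admit the Shooting Lemma directly. Here I would instead combine Lemma~\ref{lem:marked} with Montel's theorem, using three holomorphically moving repelling periodic points as forbidden targets for the iterate family on a generically chosen 1D slice $D \ni \la_0$ with $D \not\subset H$. The main obstacle of the proof lies in this case: one must guarantee that the iterate family remains non-normal at $\la_0$ after restriction to $D$, so that Montel's theorem still produces a Misiurewicz parameter on $D$ distinct from $\la_0$. This should follow from a Zalcman-type rescaling argument, which shows that non-normality of a holomorphic family at a point of $M$ is always detected on some complex line through that point, with genericity providing enough flexibility to additionally require $D \not\subset H$.
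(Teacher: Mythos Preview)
Your proposal is correct and follows essentially the same route as the paper: reduce to a one-dimensional slice $D \not\subset H$, apply the Shooting Lemma (Proposition~\ref{lem:shooting}) to land on a moving repelling periodic point, and use discreteness of $D\cap H$ together with Lemma~\ref{lem:misiuimpliesactive} to obtain an active parameter outside $H$; the exceptional case is handled by Montel's theorem.

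The one structural difference is that the paper argues by contradiction, assuming $\mathcal{A}(v_\la)\cap U\subset H$. This buys a small simplification: under that hypothesis, \emph{every} parameter where $f_\la^n(v_\la)\in\partial W_\la$ lies in $H$, so choosing $D\not\subset H$ automatically guarantees non-persistence of the boundary relation along $D$, without the separate genericity argument for your condition~(b). Your direct construction is equally valid, since (as you note) both conditions fail on proper algebraic subsets of the tangent space at $\la_1$, and the equation $f_\la^N(v_\la)=\psi_\la(G(\la_1))$ defining failure of~(b) is holomorphic in~$\la$.

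On case~(3): the paper says only that it ``follows from a similar but more classical application of Montel's theorem,'' without addressing the restriction-to-a-slice issue you raise. Your proposed Zalcman approach is a reasonable way to detect non-normality along some complex line, but note that the paper's contradiction framing again offers a shortcut: under $\mathcal{A}(v_\la)\cap U\subset H$, the iterate family is normal on $U\setminus H$, and one can argue that normality extends across the analytic set $H$ (e.g.\ via Marty's criterion and extension of locally bounded subharmonic functions), contradicting non-normality on $U$. Either way, both the paper and your proposal leave this case as a sketch.
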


\begin{proof}
	Let $\lam_0 \in \mathcal{A}(v_\lam)$, $H$ a proper  analytic subset   of $M$ containing $\lam_0$, and $U$ be a small polydisk centered at $\lam_0$ in $M$. Assume for a contradiction that 
	$\mathcal{A}(v_\lam) \cap U \subset H \cap U$. Let $h_n(\lam):=f_\lam^n(v_\lam)$, wherever this expression is well-defined. Let $z_{\lam_0}$ be a repelling periodic point of period at least 3 for $f_{\lam_0}$ which is not Picard exceptional. Let $z_\lam$ be the corresponding repelling periodic point for $f_\lam$ given by the Implicit Function Theorem. Up to reducing $U$, we may assume that $\lam \mapsto z_\lam$ is 
	defined over $U$.

	Since $\lam_0 \in \acal(v_\lam)$, there is no $N \in \N$ such that $h_N(\lam) \in X \setminus W_\lam$ for all  $\lam \in U$.
	By Lemma \ref{lem:Gct} and the assumption that $v(\la)$ is active at $\la_0$, 
	we cannot have that $f_\la^n(v_\la) \in \partial W_\la$ for all $\la$ in an open subset of $U$.
	Therefore, if $\lam \in U$ and $n \in \N$ are such that $h_n(\lam) \in \partial W_\lam$, then $\lam \in \acal(v_\la) \cap U$, therefore in $H$.
	
	We now distinguish two cases: 
	\begin{enumerate}
		\item either there exists $n_0 \in \N$ and $\lam_1 \in U$ such that $h_{n_0}(\lam_1) \in \partial W_{\lam_1}$ non-persistently;
		\item or for every $n \in \N$, $h_n$ is well-defined over $U$ but not normal.
	\end{enumerate}

	Let us first treat case (1). Let $D$ be a one-dimensional holomorphic disk  passing through $\lam_1$ and not contained in $H$. Then by the choice of $\lam_1$ and  our previous observation, $h_{n_0}(\lam_1)\in \partial W_{\lam_1}$ and there exists $\lam \in D \setminus \{\lam_1\}$ such that $h_n(\lam) \notin \partial W_\lam$. By the Shooting Lemma (Proposition \ref{lem:shooting}) applied with $\gamma(\lam):=z_\lam$ and $M:=D$, we find some $\lam_2 \in D \setminus \{\lam_1\}$ such that 
	$h_{n_0+1}(\lam_2)=z_{\lam_2}$, in other words, $v_{\lam_2}$ is Misiurewicz.  Therefore $\lam_2 \in \acal(v_{\lam_2})$, but $\lam_2 \notin H$, a contradiction.

	Case 2 follows from a similar but more classical application of Montel's theorem.
\end{proof}

\begin{prop}[Density of truncated parameters]\label{prop:trunc} Assume that $f_{\lambda_0}$ is a non-exceptional Ahlfors island map.
	Let $v_\lam$ be a singular value, and assume that it is active at $\lam_0$.
	Then there exists $n_k \to +\infty$ and $\lam_k \to \lam_0$ such that $f^{n_k}(v_{\lam_k})\in\partial W_{\lambda_k}$ non persistently.
\end{prop}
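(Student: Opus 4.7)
The plan is to combine the trichotomy for active singular values (Proposition~\ref{prop:activity}) with the Shooting Lemma (Proposition~\ref{lem:shooting}). Since $f_{\lambda_0}$ is non-exceptional by hypothesis, case (3) of Proposition~\ref{prop:activity} is excluded, so the activity of $v_\lambda$ at $\lambda_0$ leaves us with either: (i) there exists $n_0 \in \N$ with $f_{\lambda_0}^{n_0}(v_{\lambda_0}) \in \partial W_{\lambda_0}$ non persistently; or (ii) there are an injective sequence $\mu_k \to \lambda_0$ and integers $m_k \to +\infty$ with $f_{\mu_k}^{m_k}(v_{\mu_k}) \in \partial W_{\mu_k}$.

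Case (ii) yields the desired sequences immediately with $\lambda_k := \mu_k$ and $n_k := m_k$. The non persistence of each of these relations follows from Lemma~\ref{lem:Gct}: if $f_{\lambda_k}^{n_k}(v_{\lambda_k}) \in \partial W_{\lambda_k}$ held persistently on an open neighborhood of some $\lambda_k$, the lemma would force the relation to hold persistently on all of $M$, and in particular on a neighborhood of $\lambda_0$, making $v_\lambda$ passive at $\lambda_0$ (clause (1) of the definition of passivity) and contradicting our hypothesis.

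Case (i) is the substantive part. The strategy is to use Shooting to redirect the orbit of $v_\lambda$ at time $n_0+2$ onto a deep preimage of a boundary point, thereby producing a collision with $\partial W$ at arbitrarily large time. Fix $N \in \N^*$. Because $f_{\lambda_0}$ is non-exceptional, Lemma~\ref{lem:boundary} gives $J(f_{\lambda_0}) = \overline{\bigcup_n f_{\lambda_0}^{-n}(\partial W_{\lambda_0})}$, which is infinite and perfect, while the set $E$ of Picard exceptional values of $f_{\lambda_0}$ is finite. A short downward induction on $A_n := f_{\lambda_0}^{-n}(\partial W_{\lambda_0})$ (using that if $A_{n+1} = f_{\lambda_0}^{-1}(A_n)$ is finite then every point of $A_n$ is Picard exceptional) shows that the $A_n$ cannot all eventually be contained in $E$, for otherwise $\bigcup_n A_n$ would be finite, contradicting its density in the infinite set $J(f_{\lambda_0})$. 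Hence for arbitrarily large $N$ we may pick $x \in A_N$ which is not Picard exceptional, and set $w := f_{\lambda_0}^N(x) \in \partial W_{\lambda_0}$. By Lemma~\ref{lem:marked}, applied to $F(\lambda,z) := f_\lambda^N(z)$ and to the target $\lambda \mapsto \psi_\lambda(w) \in \partial W_\lambda$, after passing to a branched cover $\pi: \D \to V$ of a neighborhood $V$ of $\lambda_0$ with $\pi(0) = \lambda_0$ we obtain a holomorphic map $t \mapsto x(t)$ on $\D$ with $x(0) = x$ and $f_{\pi(t)}^N(x(t)) = \psi_{\pi(t)}(w) \in \partial W_{\pi(t)}$. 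We now apply the Shooting Lemma in the reparametrized natural family $\{f_{\pi(t)}\}_{t \in \D}$ at the base point $t = 0$ (the non-persistence hypothesis of case (i) transfers to $t=0$ since $\pi$ is open) with $\gamma(t) := x(t)$: as $\gamma(0) = x$ is not Picard exceptional, we find $t'$ arbitrarily close to $0$ with $f_{\pi(t')}^{n_0+2}(v_{\pi(t')}) = x(t')$, and composing with $f_{\pi(t')}^N$ gives
\[
f_{\pi(t')}^{n_0+2+N}(v_{\pi(t')}) \;=\; \psi_{\pi(t')}(w) \;\in\; \partial W_{\pi(t')}.
\]
Setting $\lambda_k := \pi(t'_k)$ and $n_k := n_0 + 2 + N_k$ for $N_k \to +\infty$ produces sequences with the required properties; non-persistence at each $\lambda_k$ is obtained exactly as in case (ii).

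The main obstacle I expect is arranging the target $x$ so that it simultaneously (a) moves holomorphically with the parameter, (b) is not Picard exceptional, and (c) has its iterate at time $N$ landing on $\partial W$ for arbitrarily large $N$. Condition (a) is handled by working on a branched cover via Lemma~\ref{lem:marked}, while (b) and (c) reduce to the observation, recorded above, that $J(f_{\lambda_0})$ is an infinite perfect set equal to the closure of the backward orbit of $\partial W_{\lambda_0}$, whereas Picard exceptional values form only a finite set.
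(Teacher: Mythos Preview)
Your proof is correct and follows essentially the same route as the paper's: reduce via Proposition~\ref{prop:activity} to the case where $f_{\lambda_0}^{n_0}(v_{\lambda_0})\in\partial W_{\lambda_0}$ non-persistently, locate a deep non--Picard-exceptional preimage $x$ of $\partial W$, follow it holomorphically via Lemma~\ref{lem:marked}, and then apply the Shooting Lemma (Proposition~\ref{lem:shooting}) to hit $x_\lambda$ at time $n_0+2$. Your write-up is in fact more explicit than the paper's on two points---the separate handling of case~(2) and the non-persistence of each $f_{\lambda_k}^{n_k}(v_{\lambda_k})\in\partial W_{\lambda_k}$ via Lemma~\ref{lem:Gct}---and your downward-induction argument to find $x\in f^{-N}(\partial W)$ outside the finite set of Picard exceptional values is a correct alternative to the paper's one-line appeal to the island property.
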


\begin{proof}
	Given $N \in \N$, we will construct $\lam$ arbitrarily close to $\lambda_0$ such that $f_{\lam}^n(v_{\lam}) \in \partial W_\lam$, for some $n \geq N$.
	In view of Lemma \ref{lem:densepreimages} and since $f$ is not exceptional, the set $\bigcup_{n \geq 0}  f^{-n}(\partial W)$ is infinite.
By the Ahlfors Island property we may find $n \geq N$ and  $x \in f^{-n}(\partial W)$ with infinitely many preimages.
	By Lemma \ref{lem:marked} applied to $F(\lam, z):=f_\lam^n(z)$
	and $\gamma(\lam):=\psi_\lam \circ f^n(x) \in \partial W_\lam$,
	up to passing to a branched cover in parameter space, 
	we may assume without loss of generality that there is a local holomorphic germ $\lam \mapsto x_\la$, with $x_{\lam_0}=x$ 
	and for all $\lam$ close enough to $\lam_0$, 
	$f_\lam^n(x_\lam) \in \partial W_{\la}$.

	By Proposition ~\ref{prop:activity} and since $v_\lam$ is active at $\lam_0$, we may assume without loss of generality  that there is $n_0 \in \N$ such that 
	$f_{\lam_0}^{n_0}(v_{\lam_0}) \in \partial W_{\lam_1}$ non-persistently. Applying Lemma \ref{lem:shooting}
	with $\gamma(\lam):=x_\la$, we find $\lam_1$ arbitrarily close to $\lam_0$ such that 
	$f_{\lam_1}^{n_0+2}(v_{\lam_1}) = x_{\lam_1}$, which implies $f_{\lam_1}^{n_0+2+n}(v_{\lam_1}) = y_{\lam_1} \in \partial W_{\lam_1}$. 
	
\end{proof}

\begin{prop}[Density of Misiurewicz parameters]\label{prop:misiu}
	Let $v_\lam$ be a singular value, and assume that it is active at $\lam_0$.
	Let $x_{\lam_0}$ be a repelling periodic point for an Ahlfors island map $f_{\lam_0}$ of period at least 3 which is not a Picard exceptional value, and let $x_\la$ be its analytic continuation in some neighborhood of $\lambda_0$. Then there is $n_k \to +\infty$ and   $\lam_k \to \lam_0$ such that 
	$$f_{\lam_k}^{n_k}(v_{\lam_k}) = x_{\lam_k}.$$ 
\end{prop}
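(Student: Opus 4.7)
The goal is, for every $N\in\N$ and every neighborhood $U$ of $\lambda_0$, to produce $\lambda'\in U$ and an integer $n\geq N$ with $f_{\lambda'}^n(v_{\lambda'})=x_{\lambda'}$; a diagonal extraction then supplies the sequences required by the statement. The strategy parallels Proposition~\ref{prop:trunc}: we use the Shooting Lemma to land the orbit of $v_\lambda$ on a prescribed preimage of $x_\lambda$, except that now the target is a deep iterated preimage of the repelling periodic point rather than a boundary point of $W_\lambda$.

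First I would build a holomorphically moving deep preimage of $x_\lambda$. Since $x_{\lambda_0}$ is not Picard exceptional, $f_{\lambda_0}^{-1}(x_{\lambda_0})$ is infinite; iterating and discarding the (globally finitely many, by the $N$-island property) Picard exceptional values encountered at each level, one can pick $y_{\lambda_0}\in f_{\lambda_0}^{-m}(x_{\lambda_0})$ with $m\geq N$ such that $y_{\lambda_0}$ itself is not Picard exceptional. Lemma~\ref{lem:marked}, applied to $F(\lambda,z)=f_\lambda^m(z)$ with marked value $\lambda\mapsto x_\lambda$, then produces, after passing to a branched cover of parameter space, a holomorphic germ $\lambda\mapsto y_\lambda$ near $\lambda_0$ satisfying $f_\lambda^m(y_\lambda)=x_\lambda$.

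Next I would exploit the activity of $v_\lambda$ via the three alternatives of Proposition~\ref{prop:activity}. In case~(1) there is $n_0\in\N$ with $f_{\lambda_0}^{n_0}(v_{\lambda_0})\in\partial W_{\lambda_0}$ non-persistently; the Shooting Lemma (Proposition~\ref{lem:shooting}) with target $\gamma(\lambda)=y_\lambda$ yields $\lambda'\in U$ with $f_{\lambda'}^{n_0+2}(v_{\lambda'})=y_{\lambda'}$, hence $f_{\lambda'}^{n_0+2+m}(v_{\lambda'})=x_{\lambda'}$ with iteration count at least $N$. Case~(2) provides an injective sequence $\lambda_k\to\lambda_0$ and integers $m_k\to\infty$ with $f_{\lambda_k}^{m_k}(v_{\lambda_k})\in\partial W_{\lambda_k}$; Lemma~\ref{lem:Gct} forces each such hit to be non-persistent, so I re-center the case~(1) argument at $\lambda_k$ for $k$ large. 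Case~(3) forces $f_{\lambda_0}$ to be exceptional and makes $\{\lambda\mapsto f_\lambda^n(v_\lambda)\}_{n\geq N}$ well-defined and non-normal on every neighborhood of $\lambda_0$; here the hypothesis that the period is at least $3$ supplies three pairwise distinct moving values $x_\lambda,\,f_\lambda(x_\lambda),\,f_\lambda^2(x_\lambda)$, which cannot all be omitted by a non-normal family in view of Montel's theorem (Lemma~\ref{lem:normalf}), so a pair $(\lambda',n)$ exists with $f_{\lambda'}^n(v_{\lambda'})$ equal to one of these three points, and at most two additional iterations bring us onto $x_{\lambda'}$ itself.

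The main obstacle will be the bookkeeping needed to force $n_k\to+\infty$: in case~(1) this is controlled by choosing the preimage $y_{\lambda_0}$ at depth $m\geq N$ inside the preimage tree of $x_{\lambda_0}$, which is where the non-Picard-exceptional hypothesis on $x_{\lambda_0}$ together with the island property intervene; in case~(3) it follows because removing finitely many maps from a non-normal family leaves it non-normal. A secondary subtlety, inherent to the non-rational setting, is that the Shooting Lemma itself requires a non-Picard-exceptional target, which is why $y_{\lambda_0}$ must also be chosen outside the finite set of Picard exceptional values.
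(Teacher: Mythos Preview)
Your argument is correct, but it is organized differently from the paper's. The paper first disposes of the non-exceptional case in one stroke: Proposition~\ref{prop:trunc} already produces, for any $N$, a parameter $\lambda_1$ arbitrarily close to $\lambda_0$ at which $f_{\lambda_1}^{n_1}(v_{\lambda_1})\in\partial W_{\lambda_1}$ non-persistently with $n_1>N$; a single application of the Shooting Lemma with target $x_\lambda$ itself then gives $f_{\lambda_2}^{n_1+2}(v_{\lambda_2})=x_{\lambda_2}$. The exceptional case is handled by invoking Proposition~\ref{prop:sparsexcep} and Lemma~\ref{lem:perturbA} to perturb off the exceptional locus (or by the classical Montel argument when the whole family is exceptional). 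Thus the paper obtains $n_k\to+\infty$ by making the boundary hit occur late and shooting directly to $x_\lambda$.

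You instead force $n_k\to+\infty$ by shooting to a deep preimage $y_\lambda\in f_\lambda^{-m}(x_\lambda)$ with $m\ge N$, and you organize the proof around the trichotomy of Proposition~\ref{prop:activity}, treating Case~(3) by Montel's theorem with the moving targets $x_\lambda,f_\lambda(x_\lambda),f_\lambda^2(x_\lambda)$. This is a legitimate alternative and has the mild advantage of bypassing the exceptional/non-exceptional split (no appeal to Lemma~\ref{lem:perturbA} is needed). Two small corrections: in Case~(3), if you land on $f_{\lambda'}^{\,j}(x_{\lambda'})$ with $j\in\{0,1,2\}$ you need $p-j$ further iterates (where $p\ge 3$ is the period), not ``at most two''; and in Case~(2) you are applying the Shooting Lemma at $\lambda_k$ rather than at $\lambda_0$, so you should note that $y_{\lambda_k}$ is still not Picard exceptional (in a natural family the Picard exceptional set moves with $\varphi_\lambda$, so this follows by continuity). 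Neither point affects the validity of your plan.
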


\begin{proof}
	Let us first assume that $f$ is not exceptional. Let $\eps>0$ and $N \in \N$, and let $\lam \mapsto x_\lam$ 
	denote the holomorphic motion of $x_{\lam_0}$ as a repelling periodic point, which we may assume to be well-defined for  $\lam \in \B(\lam_0,\eps)$ (up to taking $\eps>0$ small enough).
	By Proposition \ref{prop:trunc}, there exists $\lam_1 \in \B(\lam_0, \frac{\eps}{2})$ and $n_1>N$ such that 
	$f_{\lam_1}^N(v_{\lam_1}) \in \partial W_{\lam_1}$ (non-persistently). By Proposition \ref{lem:shooting},
	there exists $\lam_2 \in \B(\lam_1,\frac{\eps}{2})$ such that $f_{\lam_2}^{n_1+2}(v_{\lam_2}) =x_{\lam_2}$, 
	and we are done.
	
	Asssume now that $f$ is exceptional. By Proposition \ref{prop:sparsexcep}, either for all $\lam \in M$ we have that 
	$f_\lam$ is exceptional, or the set of $\lam \in M$ such that $f_\lam$ is exceptional is a proper analytic subset of $M$. In the latter case, we may use Lemma \ref{lem:perturbA} to perturb slightly $\lam_0$ to remain in the activity locus of $v_{\lam_0}$ but outside this analytic set, thus reducing to the non-exceptional case.
	Finally, if all maps $f_\la$ are exceptional, then we can just apply the classical argument using Montel's theorem.
\end{proof}

\section{Characterization of stability: Proof of Theorem \ref{th:mssahlfors}}

In this section we prove that the backward orbits of repelling periodic points move holomorphically, provided they do not collide with the postsingular set. We then use this fact to prove Theorem~\ref{th:mssahlfors}.
In what follows $M$ always denotes a connected complex manifold.

\subsection{Generalities on $J$-stability: Proof of Proposition \ref{prop:equivalent_definitions}}

\begin{defn}[Holomorphic motions respecting the dynamics]\label{def:holomotion}
A {\em holomorphic motion} of a set $A \subset X$  over an open set $U\subset M$
 with basepoint $\la_0\in U $ is a map  $H: U  \times A \rightarrow X$ given by 
 $(\la,x) \mapsto H_\la(x) $ such that
 \begin{enumerate}
\item  for each $x \in A$ , $\la \mapsto H_\la(x)$ is holomorphic , 
 \item   for each  $\la \in U$,  $H_\la(\cdot) $ is injective on $A$, and,
  \item  $H_ {\la_0} \equiv {\rm Id}$.
  \end{enumerate}

Recall that, by the $\lambda$-lemma \cite{mss}, a holomorphic motion is jointly continuous, and $H_\la$ extends to a continuous map of $\overline{A}$.

A holomorphic motion of a set $X$ {\em respects the dynamics} of the holomorphic  family $\{f_\la\}_{\la\in M}$ if 
 $$
 H_\la \circ f_{\la_0} = f_\la \circ H_{\la}
 $$ whenever both $x$ and $f_{\la_0}(x)$ belong to $A$.
\end{defn}

Recall our definition of $J$-stability.

\begin{defi}\label{def:jstablesection}
	Let $\{f_\la = \phi_\la \circ f_{\la_0} \circ \psi_\la^{-1}\}_{\la \in U}$ be natural family of Ahlfors island maps with basepoint $\la_0 \in U$. We say that $\{f_\la\}_{\la \in U}$ is $J$-stable if there exists a holomorphic motion 
	$H: U \times J(f) \to X$ with basepoint $\la_0$ such that 
	\begin{enumerate}
		\item\label{item1} for all $\la \in U$, $h_\la:= H(\la, \cdot): J(f_{\la_0}) \to J(f_\la)$ and 
		$h_\la \circ f_{\la_0} = f_\la \circ h_\la$
		\item\label{item2} for all $\la \in U$, $h_\la = \phi_\la$ on $S(f_{\la_0}) \cap J(f_{\la_0})$.
	\end{enumerate}
\end{defi} 

Proposition \ref{prop:equivalent_definitions} in the Introduction states that, in the case of finite maps or when $f$ is meromorphic and the set of singular values has no interior, this definition coincides with the classical one. This will be the content of Lemma \ref{lem:compatibilityftm} and Proposition \ref{prop:S(f)nointerior} respectively, which we prove in this section.

\begin{lem}\label{lem:compatibilityftm}
	Let $\{f_\la = \phi_\la \circ f_{\la_0} \circ \psi_\la^{-1}\}_{\la \in U}$ be a natural family of finite type maps.
	Assume that there is a a holomorphic motion 
	$H: U \times J(f) \to X$ with basepoint $\la_0 \in U$ satisfying (\ref{item1}). Then $H$ satisfies (\ref{item2})
	and so the family $\{f_\la\}_{\la \in U}$ is $J$-stable in the sense of Definition \ref{def:jstablesection}.
\end{lem}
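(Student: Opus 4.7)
The plan is to fix $v \in S(f_{\la_0}) \cap J(f_{\la_0})$ and show $h_\la(v) = \phi_\la(v)$ for $\la$ in a neighborhood of $\la_0$, where by Remark~\ref{rem:marking} we may assume $\phi_{\la_0} = \psi_{\la_0} = \mathrm{Id}$, so $h_{\la_0}(v) = v = \phi_{\la_0}(v)$. A preliminary observation is that the conjugacy condition (\ref{item1}) forces $h_\la(\partial W) = \partial W_\la$: indeed $f_\la \circ h_\la = h_\la \circ f_{\la_0}$ is meaningful only when $h_\la(w) \in W_\la$ for $w \in W \cap J(f_{\la_0})$, and the symmetric statement for $h_\la^{-1}$ closes the loop, using that $\partial W \subset J(f_{\la_0})$ and $\partial W_\la \subset J(f_\la)$.

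First, suppose $v$ is a critical value with critical preimage $c \in W$ of local degree $k \geq 2$; by backward invariance of the Julia set, $c \in J(f_{\la_0})$. Pick $z_n \in J(f_{\la_0}) \setminus S(f_{\la_0})$ with $z_n \to v$ (using that $J$ is perfect by Theorem~\ref{th:repdense}), and let $w_n^{(1)}, \ldots, w_n^{(k)}$ be the $k$ distinct preimages of $z_n$ in a small neighborhood of $c$; they lie in $J(f_{\la_0})$ and converge to $c$. Continuity of $h_\la$ on $J$ via the $\la$-lemma sends them to $k$ distinct points converging to $h_\la(c) \in W_\la$ (by the preliminary observation), while $f_\la(h_\la(w_n^{(i)})) = h_\la(z_n) \to h_\la(v)$. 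Hurwitz's theorem then forces the local degree of $f_\la$ at $h_\la(c)$ to be at least $k$, so $h_\la(c)$ is a critical point of $f_\la$. By property~(b) of natural families, the critical points of $f_\la$ form the discrete set $\{\psi_\la(c') : c' \in \mathrm{Crit}(f_{\la_0})\}$, varying holomorphically in $\la$; continuity in $\la$ together with $h_{\la_0}(c) = c = \psi_{\la_0}(c)$ then forces $h_\la(c) = \psi_\la(c)$ near $\la_0$, whence $h_\la(v) = f_\la(\psi_\la(c)) = \phi_\la(v)$ by the natural family relation.

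If instead $v$ is a (possibly purely) asymptotic value, the finite type hypothesis makes $v$ isolated in $S(f_{\la_0})$, so $v$ admits a logarithmic tract $T$ with $f_{\la_0} : T \to D \setminus \{v\}$ an infinite degree universal cover over a small disk $D \ni v$, and correspondingly $T_\la := \psi_\la(T)$ is a logarithmic tract for $f_\la$ over the asymptotic value $\phi_\la(v)$. Pick $z_n \in J(f_{\la_0}) \cap D$ with $z_n \to v$ and choose preimages $p_n \in T$ indexed so as to escape to a boundary end $w_* \in \partial T \cap \partial W$; then $p_n \in J(f_{\la_0})$, $w_* \in \partial W \subset J(f_{\la_0})$, and by continuity $h_\la(p_n) \to h_\la(w_*) \in \partial W_\la$ while $f_\la(h_\la(p_n)) = h_\la(z_n) \to h_\la(v)$. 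Thus $h_\la(v)$ is a cluster value of $f_\la$ at the boundary point $h_\la(w_*)$, reached through an escaping sequence inside $W_\la$; the finite-type structure of $f_\la$ (isolated asymptotic values, and cluster sets at tract ends reducing to single asymptotic values) then forces $h_\la(v)$ to be an asymptotic value of $f_\la$. These form the discrete set $\{\phi_\la(v') : v' \text{ asymptotic value of } f_{\la_0}\}$, and holomorphic dependence on $\la$ combined with agreement at $\la_0$ pins down $h_\la(v) = \phi_\la(v)$.

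The main obstacle is the asymptotic case: rigorously upgrading the discrete escaping sequence $h_\la(p_n) \to h_\la(w_*)$ to the conclusion that $h_\la(v)$ is a genuine asymptotic value of $f_\la$. This is exactly where the finite type hypothesis is essential, both for producing logarithmic tracts over $v$ and for rigidifying the cluster behavior of $f_\la$ at boundary ends. The critical-value case, in contrast, is essentially classical, relying only on the local branched cover structure and Hurwitz's theorem.
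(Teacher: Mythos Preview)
Your treatment of the critical-value case is sound and essentially the same as the paper's, only with more detail: the paper simply asserts that $h_\la(c)$ is a critical point of $f_\la$, whereas you justify this via a Hurwitz-type counting argument. The final identification $h_\la(v)=\phi_\la(v)$ is then obtained in both cases from the discreteness of the relevant set and continuity at $\la_0$.

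The asymptotic-value case, however, has a genuine gap, and it is precisely the one you flag. From the discrete escaping sequence $h_\la(p_n)\to h_\la(w_*)\in\partial W_\la$ with $f_\la(h_\la(p_n))\to h_\la(v)$ you only get that $h_\la(v)$ lies in the cluster set of $f_\la$ at the boundary point $h_\la(w_*)$. For a general finite type map this cluster set can be all of $X$, and a point in it need not be an asymptotic value; your appeal to ``cluster sets at tract ends reducing to single asymptotic values'' is not a property of finite type maps in this generality (and in any case $h_\la(w_*)$ is merely a boundary point of $W_\la$, with no a priori relation to any tract of $f_\la$). Nor can one manufacture an asymptotic \emph{path} from the sequence: if $h_\la(v)$ happened to be a regular value, the points $h_\la(p_n)$ could lie in pairwise distinct sheets of $f_\la^{-1}(D_\la)$ accumulating on $\partial W_\la$, and joining them by arcs inside $W_\la$ gives no control on the $f_\la$-image.

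The paper avoids this by a covering-theoretic argument. One fixes a single $z\in T\cap J(f_{\la_0})$, lets $D_\la$ be a small disk around $h_\la(v)$ with $D_\la^*\cap S(f_\la)=\emptyset$ and $h_\la^{-1}(D_\la\cap J(f_\la))\subset D$, and takes $V_\la$ the connected component of $f_\la^{-1}(D_\la)$ containing $h_\la(z)$. The key point is that $V_\la$ contains \emph{no} preimage of $h_\la(v)$: any such preimage $y$ lies in $J(f_\la)$, and $h_\la^{-1}(y)$ would be a preimage of $v$ lying in the tract $T$, contradicting $f_{\la_0}:T\to D\setminus\{v\}$. The classification of coverings of the punctured disk then forces $f_\la:V_\la\to D_\la^*$ to be the exponential cover, so $h_\la(v)$ is an asymptotic value of $f_\la$. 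Replacing your cluster-set step with this argument closes the gap.
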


\begin{proof}
	Let $f:=f_{\la_0}$, and $v \in S(f) \cap J(f)$. Let $h_\la:=H(\la,\cdot): J(f) \to J(f_\la)$.
	It is enough to prove that for all $\la \in U$, $h_\la(v) \in S(f_\la)$. Indeed, if it is the case, then since $h_{\la_0}(v) = \phi_{\la_0}(v)=v$ and $\phi_\la(v) \in S(f_\la)$ for all $\la \in U$, 	we must have by $h_\la(v)=\phi_\la(v)$ by continuity and finiteness of $S(f_\la)$.
	
	Let us now prove that $h_\la(v) \in S(f_\la)$. We have two (non-mutually exclusive) cases to treat: either $v$ is a critical value or it is an asymptotic value with a logarithmic tract. In the first case, there is a critical point $c \in J(f)$ such that $f(c)=v$. Then $h_\la(c) \in J(f_\la)$ is a critical point for $f_\la$ and $f_\la(c)=h_\la(v)$ is a critical value for $f_\la$.
	
	Let us now treat the case when  $v$ is an asymptotic value: then there exists a topological disk $D \subset X$ containing $v$ and a connected component $V$ of 
	$f^{-1}(D)$ such that $f: V \to D \setminus \{v\}$ is a universal cover. Let $\la \in U$, and let $D_\la$ be a topological disk containing $h_\la(v)$
	chosen small enough that $D_\la^* \cap S(f_\la) = \emptyset$ where $D_\la^*:=D_\la \setminus \{h_\la(v)\}$, and that $h_\la^{-1}(D_\la \cap J(f_\la)) \subset D$.

	Let  $z \in V\cap J(f)$, and let $V_\la$ denote the connected component of $f_\la^{-1}(D_\la)$ which contains $h_\la(z)$. We claim that $V_\la$ contains no preimage of $h_\la(v)$: indeed, if $y$ were such a point, then $h_\la^{-1}(y)$ would be a preimage of $v$ in $V$ for $f$, which is impossible.  By the classification of coverings of the punctured disk $D_\lambda^*$,  it follows that $f_\la: V_\la \to D_\la^*$ is equivalent to the exponential map; therefore $h_\la(v)$ is an asymptotic value, and we are done.

\end{proof}

The following proposition gives the same conclusion but for natural families of meromorphic maps whose singular value set has no interior.

\begin{prop}\label{prop:S(f)nointerior}
	Let $\{f_\la = \phi_\la \circ f_{\la_0} \circ \psi_\la^{-1}\}_{\la \in U}$ be a natural family of \emph{meromorphic}  maps, 
	such that $S(f_\la)$ has no interior.
	Assume that there is a a holomorphic motion 
	$H: U \times J(f) \to X$ with basepoint $\la_0 \in U$ satisfying (\ref{item1}). Then $H$ satisfies (\ref{item2})
	and so the family $\{f_\la\}_{\la \in U}$ is $J$-stable in the sense of Definition \ref{def:jstablesection}.
\end{prop}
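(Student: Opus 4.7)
My plan is to follow the structure of Lemma \ref{lem:compatibilityftm}, replacing the finiteness of $S(f_\la)$ used there by the assumption that $S(f_\la)$ has empty interior. Set $f:=f_{\la_0}$, fix $v\in S(f)\cap J(f)$, and, by choosing $\la_0$ as basepoint (Remark \ref{rem:marking}), assume $\phi_{\la_0}=\psi_{\la_0}=\id$. I want to prove $h_\la(v)=\phi_\la(v)$ for every $\la\in U$.

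\emph{Step 1.} I first show $h_\la(v)\in S(f_\la)$ for all $\la\in U$. If $v=f(c)$ is a critical value, then $c\in J(f)$ (since $J(f)$ is backward invariant and $v\in J(f)$), so $h_\la(c)$ is well-defined; the conjugacy $h_\la\circ f=f_\la\circ h_\la$ combined with preservation of local degree under a continuous conjugacy of dynamics (counting the $d$ distinct $J$-preimages of nearby points) forces $h_\la(c)$ to be a critical point of $f_\la$ of the same local degree, so $h_\la(v)=f_\la(h_\la(c))\in S(f_\la)$. If $v$ is an isolated asymptotic value admitting a logarithmic tract, the covering-map argument from the proof of Lemma \ref{lem:compatibilityftm} transfers verbatim to the meromorphic setting. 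For a general $v\in S(f)\cap J(f)$, I would approximate $v$ by critical or asymptotic values in $J(f)$ and pass to the limit using continuity of $h_\la$ and closedness of $S(f_\la)$.

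\emph{Step 2.} I then deduce $h_\la(v)=\phi_\la(v)$. Define $G\colon U\to X$ by $G(\la):=\phi_\la^{-1}(h_\la(v))$. By Step 1 and the relation $S(f_\la)=\phi_\la(S(f))$, one has $G(\la)\in S(f)$ for every $\la\in U$, and $G(\la_0)=v$. The key point is that $G$ is quasiregular in the parameter $\la$: differentiating the identity $\phi_\la(G(\la))=h_\la(v)$ and using that $\phi_\la$ is a holomorphic motion (hence satisfies a Beltrami equation in the $z$-variable with coefficient holomorphic in $\la$) and that $h_\la(v)$ is holomorphic in $\la$, one obtains a Beltrami-type equation for $G$. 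Restricting to any one-dimensional slice through $\la_0$ and arguing as in Lemma \ref{lem:qr}, $G$ is either constant or open on that slice. Since a non-constant open map cannot take values in the empty-interior set $S(f)$, $G$ must be constant on every slice through $\la_0$, and by connectedness of $U$ we conclude $G\equiv v$, i.e. $h_\la(v)=\phi_\la(v)$.

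\emph{Main obstacle.} The delicate step is Step 1 when $v$ is merely a limit of critical or asymptotic values of $f$: the approximating singular values may well lie in the Fatou set, where the motion $h_\la$ is not a priori defined. To circumvent this one can invoke Lemma \ref{lem:densepreimages} to produce preimages of $v$ in $J(f)$ and infer the singular character of $h_\la(v)$ from the behaviour of $f_\la$ at the $h_\la$-images of these preimages, or alternatively extend $h_\la$ to a larger set via Bers--Royden/Slodkowski before applying the local-degree and tract arguments. The verification of quasiregularity of $G$ in Step 2 is also a point that requires careful bookkeeping, since it involves composing a quasiconformal family in the $z$-variable with a holomorphic family in the $\la$-variable.
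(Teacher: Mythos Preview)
Your Step 2 is correct and matches the paper exactly: with $h_\la(v)\in S(f_\la)$ in hand, set $G(\la):=\phi_\la^{-1}(h_\la(v))$, apply Lemma \ref{lem:qr} on one-dimensional slices (no further bookkeeping is needed, since $\la\mapsto h_\la(v)$ is holomorphic and $\{\phi_\la\}$ is a holomorphic family of quasiconformal homeomorphisms), and use that $S(f)$ has empty interior to force $G\equiv v$.

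Step 1, however, has a genuine gap precisely where you flag the obstacle. Your critical-value argument is fine, and the tract argument of Lemma \ref{lem:compatibilityftm} covers asymptotic values that are isolated in $S(f)$. But for an asymptotic value that is \emph{not} isolated in $S(f)$ (hence admits no logarithmic tract), or for a singular value that is only a limit of critical/asymptotic values, you have no direct argument, and your proposed workarounds do not close the gap: extending $h_\la$ by Slodkowski destroys the conjugacy relation off $J(f)$, while Lemma \ref{lem:densepreimages} produces preimages of $v$ rather than nearby singular values in $J(f)$, so it does not obviously help.

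The paper resolves Step 1 uniformly, with no case analysis and no approximation, via a theorem of Heins (Theorem \ref{th:heins}): for a transcendental meromorphic $f$, a domain $V$ meeting $J(f)$, and a component $U$ of $f^{-1}(V)$, the map $f:U\to V$ is a biholomorphism if and only if $f:U\cap J(f)\to V\cap J(f)$ is a homeomorphism (Lemma \ref{lem:key}). This gives a characterization of membership in $S(f)$ purely in terms of the restriction of $f$ to $J(f)$, which therefore transfers directly under the topological conjugacy $h_\la:J(f)\to J(f_\la)$ (Lemma \ref{lem:Sinvariant}). This is where the meromorphic hypothesis is genuinely used, and it is the idea your proposal is missing.
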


The proof is based on the following result from Heins.

\begin{theo}[\cite{heins}, Theorem 4]\label{th:heins}
	Let $f: \C \to \rs$ be a non-constant meromorphic map, and let $V \subset \rs$ be a domain. Let $U$ be a connected component of $f^{-1}(V)$. Then either $f: U \to V$ has constant finite degree, or there are at most 2 values in $V$ with finitely many preimages in $U$.
\end{theo}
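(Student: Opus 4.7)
The plan is to establish the dichotomy by first isolating when $f : U \to V$ is proper and then handling the remaining case via the essential singularity of $f$ at $\infty$. If $f : U \to V$ is proper, then by classical Riemann surface theory it is a branched covering of constant finite degree, which gives the first alternative. So the whole argument reduces to showing that if $f : U \to V$ is not proper, then at most two values in $V$ have finitely many preimages in $U$.

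I would next observe that non-properness forces a very specific geometric configuration. Since $f$ is meromorphic on all of $\C$, whenever $U$ is bounded in $\C$ the map $f$ extends continuously to $\overline{U}$ and sends $\partial U$ into $\partial V$ (by definition of $U$ as a connected component of $f^{-1}(V)$); consequently $f^{-1}(K) \cap U$ is compact for every compact $K \subset V$, so $f$ is proper. The same argument, now using the continuous extension to $\rs$, handles the case when $f$ is rational. Thus the only situation left to analyze is when $U$ is unbounded in $\C$ and $f$ is transcendental, so that $\infty \in \overline{U}$ and $\infty$ is an essential singularity of $f$.

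In this remaining case, suppose for contradiction that three distinct values $a_1, a_2, a_3 \in V$ each have only finitely many preimages in $U$. Let $E := U \cap \bigcup_i f^{-1}(a_i)$, which is finite, and set $U' := U \setminus E$ and $V' := V \setminus \{a_1, a_2, a_3\}$. Then $f : U' \to V'$ is holomorphic, and $V'$ is hyperbolic since it is a domain in $\rs$ missing at least three points. Lifting to universal covers yields $\tilde f : \widetilde{U'} \to \D$. When $U$ is biholomorphic to $\C$ or $\C \setminus \{\mathrm{pt}\}$, removing two or more points already makes $\widetilde{U'} = \D$ only in borderline subcases, but the Liouville-type argument (a non-constant map $\C \to \D$ is impossible) closes these quickly.

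The main obstacle is the case when $U$ is itself hyperbolic: then $\widetilde{U'} = \D$ as well, and Liouville alone produces no contradiction. To finish this case I would combine Picard's great theorem at the essential singularity $\infty$ with the unboundedness of $U$. At most two of $a_1, a_2, a_3$ can be Picard exceptional values of $f$, so after relabelling $a_3$ is taken infinitely often in every punctured neighborhood of $\infty$. The delicate point is to show that infinitely many of these preimages actually lie in $U$ rather than in some other connected component of $f^{-1}(V)$. This is where Heins' argument does its real work: using that $U \cap \{|z| > R\}$ contains an unbounded component for every large $R$, together with a Montel-type normality argument applied to $f$ as a map into the hyperbolic surface $V'$, one rules out the possibility that all but finitely many preimages of $a_3$ bypass $U$. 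The resulting contradiction gives the second alternative of the theorem.
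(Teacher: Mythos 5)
This statement is quoted in the paper from Heins (\cite{heins}, Theorem 4) without proof, so there is no internal argument to compare against; judged on its own merits, your proposal has a genuine gap at exactly the point where the theorem has content. The reductions you make are fine (properness gives constant finite degree; bounded $U$ or rational $f$ forces $f(\partial U)\subset\partial V$ and hence properness; the cases $U\simeq\C$ or $\C$ minus a point are dispatched by Picard -- note it is the great Picard theorem, not Liouville, that closes them, since after deleting two or more points the universal cover of $U'$ is already $\D$). But in the remaining case ($U$ unbounded and hyperbolic, $f$ transcendental), the step you describe as ``where Heins' argument does its real work'' is precisely the theorem itself, and you do not supply it: Picard gives a value $a_3$ with infinitely many preimages in $\C$ near $\infty$, but nothing you write shows that infinitely many of them lie in the \emph{given} component $U$ rather than in other components of $f^{-1}(V)$. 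An unspecified ``Montel-type normality argument applied to $f$ as a map into $V'$'' does not do this: a holomorphic map from a hyperbolic $U'$ to the hyperbolic surface $V'$ is perfectly possible, so no normality or hyperbolicity obstruction arises, and the boundary condition $f(\partial U\cap\C)\subset\partial V$ must enter in an essential way. Heins' actual proof goes through value-distribution/harmonic-measure (Lindel\"ofian) machinery for the restriction $f|_U$, not through a soft Montel argument.

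A secondary inaccuracy: the auxiliary claim that ``$U\cap\{|z|>R\}$ contains an unbounded component for every large $R$'' is false for general unbounded domains (a comb-like domain whose spokes all emanate from a bounded region has only bounded components outside every large circle), so even the geometric setup of your final step would need repair. As it stands, the proposal proves the easy reductions and leaves the core dichotomy -- non-proper implies at most two values of $V$ with finitely many preimages in $U$ -- unestablished.
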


Before proving Proposition \ref{prop:S(f)nointerior}, we will require the following lemmas.

\begin{lem}\label{lem:key}
	Let $f$ be a transcendental meromorphic map, let $V \subset \rs$ be a domain such that $V \cap J(f) \neq \emptyset$, and let $U$ be a connected component of 
	$f^{-1}(V)$. Then $f: U \to V$ is a biholomorphism if and only if $f: U \cap J(f) \to D \cap J(f)$ is a homeomorphism. 
\end{lem}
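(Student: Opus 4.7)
The forward implication is essentially automatic: if $f\colon U\to V$ is a biholomorphism, then since $J(f)$ is completely invariant under $f$ restricted to $W=\C$ (from the definition of the Fatou set, any $z\in\C$ with $f(z)\in F(f)$ must itself lie in $F(f)$, and conversely $f(F(f)\cap\C)\subset F(f)$), we obtain $f(U\cap J(f))=V\cap J(f)$; the restriction of $f$ together with the restriction of its holomorphic inverse yields a homeomorphism.

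For the nontrivial direction, my plan is to assume that $f\colon U\cap J(f)\to V\cap J(f)$ is a homeomorphism and apply Heins's theorem (Theorem~\ref{th:heins}) to $(U,V)$. The first step is to rule out the alternative where $f\colon U\to V$ fails to have constant finite degree. In that case, at most two points of $V$ have finitely many $f$-preimages in $U$. Since $J(f)$ is perfect (Theorem~\ref{th:repdense}) and $V$ is an open neighborhood of any point of $V\cap J(f)\neq\emptyset$, the set $V\cap J(f)$ is infinite. I therefore pick $w\in V\cap J(f)$ avoiding those at most two exceptional values. Then $w$ has infinitely many preimages in $U$, each of which lies in $J(f)\cap U$ by backward invariance, contradicting injectivity of $f$ on $U\cap J(f)$.

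Thus we are in the other alternative: $f\colon U\to V$ is a branched cover of constant finite degree $d\ge 1$. To finish I will show $d=1$. A degree-$d$ branched cover has only finitely many critical values in $V$, whereas $V\cap J(f)$ is infinite, so I can choose $w\in V\cap J(f)$ which is not a critical value of $f|_U$. Such a $w$ has exactly $d$ distinct preimages in $U$, all lying in $J(f)\cap U$ by backward invariance, and injectivity on the Julia set forces $d=1$. Moreover, $d=1$ precludes any critical point of $f$ in $U$ (a critical point would contribute local multiplicity $\ge 2$), so $f\colon U\to V$ is an unramified holomorphic bijection, hence a biholomorphism.

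The main delicate point I anticipate is the uniform verification of backward invariance of $J(f)$: if $\infty\in V$, its preimages are poles of $f$ rather than honest points of $\C$. However, poles of a transcendental meromorphic map always belong to $J(f)$ (no iterate of $f$ past a pole is even defined, so a pole cannot lie in the Fatou set), so the backward-invariance step goes through uniformly in all the cases above.
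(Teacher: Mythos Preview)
Your proof is correct and follows essentially the same approach as the paper's: both use Heins's theorem together with the backward invariance of $J(f)$ and the fact that $V\cap J(f)$ is infinite (perfectness) to force constant finite degree equal to $1$. The paper compresses your two steps into one by observing directly that \emph{every} point of $V\cap J(f)$ has exactly one preimage in $U$, which simultaneously rules out the infinite-degree alternative in Heins's theorem and pins the degree to $1$; your version separates these, which is fine. One small imprecision: a finite-degree branched cover $f\colon U\to V$ need not have only \emph{finitely} many critical values when $V$ is non-compact, but the critical value set is discrete in $V$, and that is all you need to choose $w\in V\cap J(f)$ avoiding it.
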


\begin{proof}
	Clearly, if $f: U \to V$ is a biholomorphism then  $f: U \cap J(f) \to D \cap J(f)$ is a homeomorphism. Conversely, let us assume that  $f: U \cap J(f) \to V \cap J(f)$ is a homeomorphism. Since $J(f)$ is perfect, $V \cap J(f)$ is infinite;  then every $z \in J(f) \cap V$ has exactly one preimage in $U$ by assumption and by invariance of the Julia set, so $f: U \to V$ must have constant finite degree by Theorem \ref{th:heins}. That degree is then 1 and hence $f: U \to V$ is biholomorphic.
\end{proof}

\begin{lem} \label{lem:Sinvariant}
	Let $\{f_\la\}_{\la \in M}$ be a natural family of meromorphic maps. 
		Assume that there is a a holomorphic motion 
	$H: U \times J(f) \to X$ with basepoint $\la_0 \in U$ satisfying (\ref{item1}). 
	Then for all $\la \in M$, $h_\la$ maps $S(f) \cap J(f)$ to $S(f_\la) \cap J(f_\la)$.
\end{lem}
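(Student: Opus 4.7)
The plan is to argue by contradiction: fix $\lambda \in M$ and $v \in S(f) \cap J(f)$, and suppose $v_\lambda := h_\lambda(v) \notin S(f_\lambda)$. Since $S(f_\lambda)$ is closed, I would pick a disk $V_\lambda$ around $v_\lambda$ with $\overline{V_\lambda} \cap S(f_\lambda) = \emptyset$, so that every connected component $U_\lambda^{(i)}$ of $f_\lambda^{-1}(V_\lambda)$ is mapped biholomorphically onto $V_\lambda$ by $f_\lambda$. Using Slodkowski's extension of $h_\lambda$ to a quasiconformal homeomorphism $H_\lambda : X \to X$, I would set $V := H_\lambda^{-1}(V_\lambda)$, a topological disk around $v$ satisfying $V \cap J(f) = h_\lambda^{-1}(V_\lambda \cap J(f_\lambda))$, so the conjugacy identifies preimages on the Julia set cleanly.

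Next I would establish a preimage-counting identity: for any $z \in V \cap J(f)$, the conjugacy $h_\lambda \circ f = f_\lambda \circ h_\lambda$ on $J(f)$ together with the injectivity of $h_\lambda$ yields a bijection between $f^{-1}(z) \cap J(f)$ and $f_\lambda^{-1}(h_\lambda(z)) \cap J(f_\lambda)$. Complete invariance of the Julia set gives that both sides count all preimages, and on the $f_\lambda$ side each $U_\lambda^{(i)}$ contributes exactly one preimage. Hence $|f^{-1}(z)|$ equals a constant $N_\lambda$ (possibly infinite) for all $z \in V \cap J(f)$, and in particular $|f^{-1}(v)| = N_\lambda$.

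If $v$ is a critical value, pick a critical point $c$ with $f(c) = v$ of local degree $k \geq 2$; by complete invariance $c \in J(f)$. Shrinking neighborhoods so that $h_\lambda$ maps a small $J(f)$-ball around $c$ inside a single component $U_\lambda^{(i_0)} \ni h_\lambda(c)$, the $k$ distinct $J(f)$-preimages of any $z \in V \cap J(f) \setminus \{v\}$ near $c$ produce $k$ distinct preimages of $h_\lambda(z)$ inside $U_\lambda^{(i_0)}$, contradicting biholomorphicity of $f_\lambda$ on $U_\lambda^{(i_0)}$. The case where $v$ is only a limit of singular values lying in $J(f)$ reduces to this via continuity of $h_\lambda$ and closedness of $S(f_\lambda)$.

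The main obstacle is the asymptotic value case, where $v$ has a logarithmic tract $T$ that need not lie in $J(f)$, so the holomorphic motion cannot be applied to an asymptotic curve in $T$ directly. The plan is to work with the countably infinite set of $J(f)$-preimages inside $T$ of any $z \in V \cap J(f) \setminus \{v\}$; these accumulate at $\infty$, and their $h_\lambda$-images are $J(f_\lambda)$-preimages of $h_\lambda(z)$ accumulating at $h_\lambda(\infty)$. A short Picard-type argument identifies $h_\lambda(\infty)$ with the essential singularity $\psi_\lambda(\infty)$ of $f_\lambda$ (otherwise $f_\lambda$ would be forced to be locally constant at $h_\lambda(\infty)$ or have a pole incompatible with the bounded values taken there), producing infinitely many components $U_\lambda^{(i_n)}$ of $f_\lambda^{-1}(V_\lambda)$ accumulating at $\psi_\lambda(\infty)$, each biholomorphic to $V_\lambda$ with diameter going to zero by normality of the inverse biholomorphisms. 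The technically most delicate step is to turn this accumulation of shrinking biholomorphic components into a continuous asymptotic curve of $f_\lambda$ reaching $\psi_\lambda(\infty)$ whose image tends to $v_\lambda$, by chaining together small arcs inside each $U_\lambda^{(i_n)}$ with short bridges through the ambient space whose images under $f_\lambda$ stay close to $v_\lambda$ thanks to the uniform shrinking; this would give $v_\lambda$ as an asymptotic value of $f_\lambda$, hence in $S(f_\lambda)$, the desired contradiction.
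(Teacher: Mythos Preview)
Your proof has a genuine gap in the asymptotic-value case. The chaining construction cannot work: the ``bridges'' between the biholomorphic components $U_\lambda^{(i_n)}$ lie \emph{outside} $f_\lambda^{-1}(V_\lambda)$ by definition, so $f_\lambda$ maps each bridge outside $V_\lambda$, and the image of the chained curve oscillates rather than converging to $v_\lambda$. More fundamentally, the conclusion you are aiming for --- that an infinite collection of disjoint biholomorphic components of $f_\lambda^{-1}(V_\lambda)$ accumulating at $\infty$ forces $v_\lambda$ to be an asymptotic value --- is simply false: take $f_\lambda(z)=e^z$ and any $v_\lambda\notin\{0,\infty\}$. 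The preimage of a small disk around $v_\lambda$ is an infinite union of biholomorphic disks marching off to $\infty$, yet $v_\lambda$ is not an asymptotic value. So no refinement of the bridging idea can succeed. (A smaller but related issue: your reduction for the ``limit of singular values'' case assumes the approximating singular values lie in $J(f)$, which is not guaranteed.)

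The paper avoids the case split altogether by proving Lemma~\ref{lem:key}: for a transcendental meromorphic $f$ and a component $U$ of $f^{-1}(V)$ with $V\cap J(f)\neq\emptyset$, the map $f:U\to V$ is a biholomorphism if and only if $f:U\cap J(f)\to V\cap J(f)$ is a homeomorphism. The non-trivial direction rests on Heins' theorem (Theorem~\ref{th:heins}): $f|_U$ either has constant finite degree, or all but at most two values in $V$ have infinitely many preimages in $U$. Since $V\cap J(f)$ is infinite and each of its points has exactly one $U$-preimage under the homeomorphism hypothesis, Heins forces degree one. With this lemma in hand, the proof is a direct transfer: $v\in S(f)$ gives a component $U$ of $f^{-1}(D)$ on which $f$ is not biholomorphic, hence not a Julia-set homeomorphism; the conjugacy $h_\lambda$ transports this failure to the $f_\lambda$-side; and Lemma~\ref{lem:key} applied to $f_\lambda$ yields a non-biholomorphic component over $h_\lambda(v)$, so $h_\lambda(v)\in S(f_\lambda)$. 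Heins' theorem is exactly the missing ingredient that makes the argument work uniformly across critical, asymptotic, and accumulation-type singular values.
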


\begin{proof}
	Let $v \in J(f) \cap S(f)$. By definition, $h_\la(v) \in J(f_\la)$, so we must prove that $h_\la(v) \in S(f_\la)$.
	Let $D$ be a topological disk containing $v$ and $U$ be a connected component of $f^{-1}(D)$ such that $f: U \to D$ is not a biholomorphism.
	
	Let $D_\la$ be a topological disk containing $h_\la(v)$, chosen small enough that $h_\la^{-1}(D_\la) \subset D$.
	Let $z \in U \cap J(f)$ chosen such that $h_\la \circ f(z) \in D_\la$ (this is possible since $J(f)$ is perfect and since by Theorem \ref{th:heins}, there are only finitely many $y \in D$ with no preimage in $U$). Finally, let $U_\la$ denote the connected component of $f_\la^{-1}(D_\la)$ containing $h_\la \circ f(z)$.
	By Lemma \ref{lem:key}, $f: U \cap J(f) \to D \cap J(f)$ is not a homeomorphism. 
	Since $h_\la: J(f) \to J(f_\la)$ is a topological conjugacy, $f_\la: h_\la(U) \cap J(f_\la) \to D_\la \cap J(f_\la)$ 
	is not a homeomorphism either.
	Again by the lemma above, $f_\la:U_\la \to D_\la$ is not a biholomorphism.
	
	We have proved that for any small enough disk $D_\la$ around $h_\la(v)$, there exists a connected component $U_\la$ 
	such that $f_\la: U_\la \to D_\la$ is not a biholomorphism; therefore, $h_\la(v) \in S(f_\la)$.
\end{proof}

\begin{proof}[Proof of Proposition \ref{prop:S(f)nointerior}]
Let $v\in S(f)\cap J(f)$ and define $g(\la):=\varphi_\la^{-1} \circ h_\la(v)$ which maps into $S(f)$ by Lemma \ref{lem:Sinvariant}. Let $U'\subset U$ be an arbitrary one-dimensional section of $U$ containing $\la_0$. Then, by Lemma \ref{lem:qr}, $g|_{U'}$ is open or constant. But  $S(f)$ has empty interior, hence $g(\la)\equiv v$ for all $\la\in U'$, since $g(\la_0)=v$.
Since $U'$ was arbitrary, the claim follows.
\end{proof}

\subsection{Independence of the notion of $J$-stability from the choice of $\phi_\lambda$: Proof of Proposition \ref{prop:indepe}}\label{sec:independence on phi}
\

 The following is a restatement of Proposition \ref{prop:indepe} in the introduction.

\begin{prop}[The notion of $J$-stability does not depend on  $\phi_\lambda$]\label{prop:independence on phi}
	Let $\{f_\la\}_{\la\in M}$ be a natural family of Ahlfors island maps, and assume that 
	$$f_\la = \phi_\la \circ f_{\la_0} \circ \psi_\la^{-1} = \tilde \phi_\la \circ f_{\la_0} \circ \tilde \psi_\la^{-1}$$
	where $f_{\la_0}: W \to X$ and $\partial W$ is totally disconnnected.
	Then $\tilde \phi_\la(v) = \phi_\la(v)$ for  all $\la\in M $ and for all $v \in S(f)$ .
\end{prop}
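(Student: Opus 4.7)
The plan is to track each type of singular value under the self-conjugation induced by comparing the two factorizations. Using the marking convention of Remark \ref{rem:marking}, normalize so that $\phi_{\la_0}=\tilde\phi_{\la_0}=\psi_{\la_0}=\tilde\psi_{\la_0}=\id$, and set
\[
A_\la := \tilde\phi_\la^{-1}\circ\phi_\la, \qquad B_\la := \tilde\psi_\la^{-1}\circ\psi_\la.
\]
These are quasiconformal self-homeomorphisms of $X$, holomorphic in $\la$, with $A_{\la_0}=B_{\la_0}=\id$. Equating the two factorizations of $f_\la$ gives $A_\la\circ f = f\circ B_\la$ on $W$; in particular $B_\la(W)=W$ and $B_\la(\partial W)=\partial W$. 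The goal is to show $A_\la(v)=v$ for every $v\in S(f)$ and every $\la\in M$.

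For critical values, use that $\psi_\la$ and $\tilde\psi_\la$ each send the discrete critical set of $f$ to the discrete critical set of $f_\la$ (property (b)), agree at $\la_0$, and are holomorphic in $\la$. For a critical point $c$, the two holomorphic maps $\la\mapsto\psi_\la(c)$ and $\la\mapsto\tilde\psi_\la(c)$ take values in the discrete set $\mathrm{Crit}(f_\la)$ and coincide near $\la_0$ (as they both sit in the same isolating neighborhood of $c$), so they agree on $M$ by the identity principle. Thus $B_\la(c)=c$, and $A_\la(f(c))=f(B_\la(c))=f(c)$: critical values are fixed.

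For an asymptotic value $v$ with path $\gamma:[0,1)\to W$ satisfying $\gamma(t)\to\partial W$ and $f\circ\gamma\to v$, the total disconnectedness of $\partial W$ forces the $\omega$-limit of $\gamma$, a nested intersection of compact connected sets in $X$, to reduce to a single point $\zeta\in\partial W$; hence $\gamma(t)\to\zeta$. The map $\la\mapsto B_\la(\zeta)$ is holomorphic in $\la$ with values in $\partial W$, so its image, being the continuous image of the connected manifold $M$ in a totally disconnected set, is the single point $\zeta$. Therefore $B_\la\circ\gamma$ also lands at $\zeta$, and from $f(B_\la\circ\gamma(t))=A_\la(f\circ\gamma(t))\to A_\la(v)$ we deduce that $A_\la(v)$ is an asymptotic value of $f$ at the \emph{same} boundary point $\zeta$, realized along the slightly deformed path $B_\la\circ\gamma$.

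For $\la$ close to $\la_0$, the deformed path $B_\la\circ\gamma$ is a small perturbation of $\gamma$, and for any sufficiently small topological disk $V\ni v$, its tail lies in the same connected component of $f^{-1}(V)\cap W$ as a tail of $\gamma$. Invoking the Lehto-Virtanen theorem on this component (uniformized to the disk), the asymptotic value of $f$ at $\zeta$ along any curve homotopic to $\gamma$ within this component is uniquely determined by the tract, which forces $A_\la(v)=v$ for $\la$ near $\la_0$; since $\la\mapsto A_\la(v)$ is holomorphic on the connected manifold $M$, the identity principle extends this globally. Finally, any singular value that is neither critical nor asymptotic is by definition a limit of such, and the continuity of $A_\la$ completes the proof. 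The main obstacle is the Lehto-Virtanen step: in the Ahlfors island setting, $S(f)$ can be large and tracts need not be isolated, so care is required in choosing the simply connected subdomain adjacent to $\zeta$ on which Lehto-Virtanen is invoked, and in checking that $B_\la\circ\gamma$ really does stay in the same tract as $\gamma$ for $\la$ near $\la_0$.
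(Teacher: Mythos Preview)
Your approach is essentially the same as the paper's: pass to the self-conjugation $A_\la\circ f=f\circ B_\la$, handle critical values by discreteness of the critical set, handle asymptotic values by showing the asymptotic path lands at a single point of $\partial W$ (using total disconnectedness), that $B_\la$ fixes $\partial W$ pointwise, and then invoke Lehto--Virtanen; finally pass to limits for general singular values. Two points require correction.

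First, a minor one: the map $\la\mapsto A_\la(v)=\tilde\phi_\la^{-1}\circ\phi_\la(v)$ is \emph{not} holomorphic in general, since inverses of holomorphic motions need not be holomorphic motions. The identity principle still applies, however, because $A_\la(v)=v$ is equivalent to $\phi_\la(v)=\tilde\phi_\la(v)$, and both sides of the latter are genuinely holomorphic in $\la$.

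Second, the step you flag yourself as ``the main obstacle'' is indeed the crux, and your treatment leaves a genuine gap. The relevant connected component $U$ of $f^{-1}(V)$ need not be simply connected in this generality, so ``uniformize to the disk'' must mean the universal cover $\pi:\D\to U$. Lehto--Virtanen then applies to $f\circ\pi$, but only once you know that suitable lifts of $\gamma$ and $B_\la\circ\gamma$ land at the \emph{same} point of $\partial\D$; this requires showing the two curves lie in the same \emph{access} to $\zeta$ in $U$, i.e.\ are homotopic in $U$ with fixed endpoint $\zeta$. The paper supplies exactly this via a correspondence theorem for accesses in multiply connected domains (Theorem~\ref{thm:correspondence FerreiraJove}), packaged as Lemma~\ref{lem:accesses}. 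It also takes care of the ``same tract'' issue you raise, by shrinking the target disk so that the deformed tracts sit compactly inside the original one, and by running the argument along a continuous path from $\la_0$ to an arbitrary $\la_1\in M$ (so no identity principle is needed at all). Your sketch does not provide a substitute for this homotopy/access step, and without it the Lehto--Virtanen invocation is incomplete.
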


 Observe that $\partial W$ is totally disconnected for all transcendental meromorphic maps ($\partial W=\{\infty\}$) or iterates of such, which belong to the class $\mathbb{K}$ of maps that  are holomorphic outside a countable set of points (see \cite{bolsch, bolsch2}). See also  \cite{her98,BDH,BDH2}.

Proposition~\ref{prop:independence on phi} is a consequence of the following Lemma.

\begin{lem}\label{lem:accesses}
	Let $f: W \to X$ be an Ahlfors isand map with $\partial W$ totally disconnected.
	Let $D \subset X$ be a topological disk, and $U$ a connected component of $f^{-1}(D)$. 
	Assume that there is an asymptotic path  $\gamma:  [0,1) \ra  U$ such that $f\circ \gamma(s) \to v \in D$ as $s\ra 1$, 
	and an isotopy $h_t: X \to X$ of homeomorphisms depending continuously on $t \in [0,1]$ such that 
	\begin{enumerate}
		\item $h_0 = \id$
		\item $h_t \circ \gamma(s) \in U$ for all $t,s \in [0,1]$
		\item $h_t$ is the identity on $\partial W$ for all $t$
		\item $f\circ h_t \circ \gamma(s) \to v_t$ as $s \to  1$, for all $t$.
	\end{enumerate}
	Then $v_1=v$.
\end{lem}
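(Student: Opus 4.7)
The strategy is to first show that both $\gamma$ and $h_1 \circ \gamma$ land at a common point of $\partial W$, using the total disconnectedness hypothesis, and then apply the Lehto-Virtanen theorem after uniformizing a suitable neighborhood of that point inside $U$.

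First, I would identify the landing point. The cluster set $\bigcap_{s < 1} \overline{\gamma([s,1))}$ is a compact connected subset of $X$ (as a decreasing intersection of continua in the compact space $X$), and since $\gamma$ is an asymptotic path it is contained in $\partial W$. Total disconnectedness then forces it to be a single point $p \in \partial W$. Because $h_t \equiv \mathrm{id}$ on $\partial W$ and $h$ is jointly continuous on the compact product $[0,1] \times X$, $h$ is uniformly continuous, and one obtains $h_t(\gamma(s)) \to p$ as $s \to 1$ for every $t \in [0,1]$, uniformly in $t$. In particular, both $v_0 = v$ and $v_1$ belong to the cluster set of $f$ at $p$ taken from within $U$.

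Next, I would localize. By the uniformity just established, for every $\epsilon > 0$ there is $s_\epsilon$ such that the rectangle $[0,1] \times [s_\epsilon,1)$ is mapped by $\Gamma(t,s) := h_t(\gamma(s))$ into $B_\epsilon(p) \cap U$. For each such $s$, the connected path $t \mapsto \Gamma(t,s)$ therefore lies in a single connected component $U_\epsilon$ of $B_\epsilon(p) \cap U$, necessarily the same for both endpoints $\gamma(s)$ and $h_1(\gamma(s))$. Assuming $U_\epsilon$ is hyperbolic (which holds for $\epsilon$ small outside the exceptional cases of Lemma \ref{lem:exceptional}, which admit separate direct arguments), let $\pi : \D \to U_\epsilon$ be a universal cover and set $\tilde f := f \circ \pi$. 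Since $D$ is a topological disk contained in a chart, $\tilde f$ is bounded holomorphic, hence a normal function.

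Finally, I would lift the localized rectangle to a continuous map $\tilde H : [0,1] \times [s_\epsilon,1) \to \D$, producing edge curves $\tilde \gamma := \tilde H(0,\cdot)$ and $\tilde \eta := \tilde H(1,\cdot)$ that both escape every compact in $\D$ as $s \to 1$, and along which $\tilde f$ has asymptotic values $v$ and $v_1$ respectively. The cluster set on $\partial \D$ of the full lift $\tilde H$ is a compact connected arc containing the cluster sets of $\tilde \gamma$ and $\tilde \eta$. Applying the Lehto-Virtanen theorem to the normal function $\tilde f$ along these two curves, which are joined inside $\D$ by a homotopy with controlled boundary behavior, forces the two asymptotic values to coincide, i.e.\ $v_1 = v$.

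The main obstacle I anticipate is this last step: passing from the homotopy-theoretic link between the lifts to the equality of asymptotic values of $\tilde f$. The clean case is when $\tilde \gamma$ and $\tilde \eta$ land at a single common boundary point $\zeta \in \partial \D$, in which case Lehto-Virtanen gives equality directly from the uniqueness of asymptotic values of a bounded holomorphic function at a single prime end. In general, one must either show that the homotopy $\tilde H$ forces a common landing, or apply a cluster-set form of Lehto-Virtanen to rule out the possibility that $\tilde \gamma$ and $\tilde \eta$ accumulate on disjoint subarcs of the common cluster arc.
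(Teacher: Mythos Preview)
Your strategy—land both curves at a common $p \in \partial W$, pass to a universal cover, and invoke Lehto–Virtanen—is exactly the paper's. The gap you flag at the end is genuine, and the paper closes it with an ingredient you are missing: a \emph{Correspondence Theorem for accesses} in multiply connected hyperbolic domains (due to Ferreira–Jov\'e, extending the classical simply connected version). It states that if $\pi:\D \to U$ is a universal cover and $A$ is an access to $p \in \partial U$, then every curve representing $A$ has a lift to $\D$ landing at one common point $\zeta \in \partial\D$. The paper works with the universal cover of $U$ itself, not of a localized $U_\epsilon$: since the isotopy $h_t$ keeps the curves inside $U$ and fixes their common endpoint $p$, all the curves $\gamma_t := h_t\circ\gamma$ lie in the \emph{same access} to $p$ in $U$; hence their lifts all land at a common $\zeta$, and Lehto–Virtanen applied to $g := f\circ\pi:\D\to D$ gives $v_1=v$ at once.

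Your localization to $U_\epsilon \subset B_\epsilon(p)\cap U$ does not help with the landing problem and is unnecessary: the cluster set on $\partial\D$ of the lifted rectangle can still be a nontrivial arc, and there is no off-the-shelf ``cluster-set Lehto–Virtanen'' that converts a homotopy between curves with arc-type boundary cluster sets into equality of asymptotic values. The access correspondence is precisely the device that collapses the situation to the single-boundary-point case in which Lehto–Virtanen applies cleanly.
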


We delay its proof for the end of the section.

\begin{proof}[Proof of Proposition~\ref{prop:independence on phi}, assuming Lemma \ref{lem:accesses}]
	Let $f:=f_{\la_0}$. We may assume without loss of generality that $\phi_{\la_0}=\psi_{\la_0}=\tilde \phi_{\la_0} = \tilde \psi_{\la_0}=\id$ (see Remark \ref{rem:marking}).
Let $\la_1 \in M$ be any parameter, and let $t \mapsto \la_t$ be a continuous path in $M$ joining $\la_0$ to $\la_1$. Let  $\phi_t:=\phi_{\la_t}^{-1} \circ \tilde \phi_{\la_t}$ and $\psi_t:=\psi_t^{-1} \circ \tilde \psi_{\la_t}$. Then $\phi_t$ and $\psi_t$ depend continuously on $t \in [0,1]$, and 
we have $f = \phi_t \circ f \circ \psi_t^{-1}$. We must prove that for all $v \in S(f)$ and all $t \in [0,1]$, $\phi_t(v)=v$.

 For each $t \in [0,1]$, $\phi_t$ maps $S(f)$ to $S(f)$, and $\psi_t$ maps critical points of $f$ to critical points of $f$. Since the set of critical points is discrete, we must have $\psi_t=\id$ on the set of critical points of $f$ for all $t$, and therefore $\phi_t(v)=v$ for all $t \in [0,1]$ if $v$ is a critical value.
It remains to treat the case where $v$ is an asymptotic value.

Let $v$ be an asymptotic value for $f$, $D$ be a small disk near $v$ and $\gamma$ be an asymptotic path for $v$ under $f$ contained in some $U$ unbounded connected component of $f^{-1}(D)$.
By definition, any accumulation point of $\gamma$ belongs to $\partial W$; and since by assumption $\partial W$ is totally disconnected, $\gamma$ must in fact converge in $X$ to a limit point $x_0 \in \partial W$.
We claim that $\psi_t=\id$ on $\partial W$. Indeed, for all $\la \in M$, $\psi_\la^{-1} \circ \tilde \psi_\la$ maps $\partial W$ to $\partial W$; 
and by Lemma \ref{lem:qr}, for any $z \in \partial W$, the map $\la \mapsto \psi_\la \circ \tilde \psi_\la^{-1}$ is either open or constant.
But it cannot be open since $\partial W$ has no interior. Therefore we have  $\psi_\la^{-1} \circ \tilde \psi_\la(z)=z$ for all $z \in \partial W$ and all $\la \in M$,
and in particular $\psi_t=\id$ on $\partial W$ for all $t \in [0,1]$.  In particular this implies that $\psi_t(\gamma(s)) \to x_0$ when $s\to 1$.

Additionally, since $f \circ \gamma(s) \to v$ and $f=\phi_t^{-1} \circ f \circ \psi_t$, we also have $f \circ \psi_t \circ \gamma(s)  \to \phi_t(v)$  as $s\to 1$, for all $t \in [0,1]$.

Now we want to apply  \ref{lem:accesses} with $h_t:=\psi_t$ and $v_t:=\phi_t(v)$. To do so, we need to show that the isotopic curves $\psi_t(\gamma)$ belong to $U$, which is not obvious, since $\psi_t$ is the identity on $\partial W$ but not on $\partial U$. To fix this, choose $D_0\Subset D$ small enough so that $\cup_{t\in[0,1]} \overline{D_t} \Subset D_0$, where $D_t:=\varphi_t(D_0)$.   Let $U_t$ be the connected component of $f^{-1}(D_t)$ contained in $U_0$ and observe that necessarily 
\[
\bigcup_{t\in[0,1]} \left(\overline{U_t}\setminus \{x_0\}\right) \Subset U.
\]
We now restrict the curves, that is, choose  $\epsilon >0$ small enough such that $f\circ \psi_t \circ \gamma[\epsilon,0) \subset D_t$, which is possible since the curves converge to $v_t$. It follows that 
\[
\psi_t(\gamma[\epsilon,0)\subset U_t\subset U,
\]
and hence the curves $\psi_t(\gamma(s))$ are isotopic in $U$, with fixed endpoint at $x_0$.

We can now apply Lemma \ref{lem:accesses} applied with $h_t:=\psi_t$ and $v_t:=\phi_t(v)$, obtaining that  $\phi_t(v)=v$ for all $t \in [0,1]$, as we wanted to show.

\end{proof}

To prove Lemma~\ref{lem:accesses} we use two results. The first one is by Lehto and Virtanen and can be found in  \cite{LehtoVirtanen}, or in \cite[Sect.~ 4.1]{Pom92}.

\begin{theo}[Lehto-Virtanen]
Let $g$ be a holomorphic  map  from $\D$ to  $\chat$ omitting three points. Let $\gamma$ be a curve in $\D$ landing at $\zeta\in\partial \D$. If $g(\gamma)$ lands at $p\in\C$, then $g$ has angular limit (and in particular radial limit) at $\zeta$ equal to $p$. It follows that, if two curves in $\D$ land at the same point in $\partial \D$, their images under $g$ cannot land at different points of $\C$.
\end{theo}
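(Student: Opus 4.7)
The plan is to reduce the Lehto--Virtanen theorem to the classical Lindel\"of theorem for bounded holomorphic functions on the disk, by lifting $g$ through the universal cover of the triply punctured sphere. After a M\"obius change of coordinates on the target, we may assume the three omitted values are $\{0,1,\infty\}$. Let $\pi\colon\D\to\chat\setminus\{0,1,\infty\}$ denote the universal covering. Since $\D$ is simply connected, $g$ admits a holomorphic lift $\tilde g\colon\D\to\D$ with $\pi\circ\tilde g = g$; in particular $\tilde g$ is bounded.

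First I would treat the case $p\notin\{0,1,\infty\}$. A small topological disk $V\ni p$ contained in $\chat\setminus\{0,1,\infty\}$ pulls back under $\pi$ to a disjoint union of sheets, each mapped biholomorphically onto $V$. Since $g(\gamma(t))\to p$, the connected lifted curve $\tilde g\circ\gamma$ must eventually enter one such sheet and converge to the corresponding preimage $\tilde p\in\pi^{-1}(p)\subset\D$. Applying Lindel\"of's theorem to the bounded holomorphic function $\tilde g$ yields the angular limit $\tilde p$ for $\tilde g$ at $\zeta$; post-composing with $\pi$ gives the desired angular limit $p$ for $g$ at $\zeta$.

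Next I would treat the case $p\in\{0,1,\infty\}$. Here $p$ is a puncture, and the preimage under $\pi$ of a small punctured neighborhood of $p$ is a disjoint union of horoball components in $\D$, each tangent to $\partial\D$ at a single cusp point. The connected curve $\tilde g\circ\gamma$ must eventually lie in a single such horoball, which forces $\tilde g(\gamma(t))\to\eta\in\partial\D$, where $\eta$ is the associated cusp. Lindel\"of's theorem, valid also for boundary limiting values of bounded functions, provides the angular limit $\eta$ for $\tilde g$ at $\zeta$. Since $\pi$ extends continuously to the cusps by sending $\eta$ to $p$, composition yields the angular limit $p$ for $g$ at $\zeta$. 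The second assertion, about two curves landing at the same $\zeta$, is then immediate: the unique angular limit forces the two terminal values of $g$ to coincide.

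The main obstacle is the second case: the lifted curve exits every compact subset of $\D$, so one cannot directly invoke the interior form of Lindel\"of's theorem. Overcoming this requires the horoball structure near the cusps of the Fuchsian group uniformizing $\chat\setminus\{0,1,\infty\}$, together with the continuous extension of $\pi$ to the cusps. Once these facts are in place, the argument proceeds in parallel with the regular case, and the rest of the proof is a direct application of Lindel\"of for bounded holomorphic maps of the disk.
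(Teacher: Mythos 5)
The paper itself does not prove this statement---it is quoted from Lehto--Virtanen and from \cite[Sect.~4.1]{Pom92}, where it is obtained for normal functions by a Montel/normal-families argument---so your covering-space reduction to Lindel\"of's theorem is a genuinely different route. Your first case ($p$ not one of the omitted values) is correct. The gap is in the second case, at the sentence ``since $\pi$ extends continuously to the cusps by sending $\eta$ to $p$''. The covering $\pi:\D\to\chat\setminus\{0,1,\infty\}$ does \emph{not} extend continuously at the cusp $\eta$: every neighbourhood of $\eta$ in $\D$ contains infinitely many translates of a fundamental domain of the deck group, hence is mapped by $\pi$ onto all of $\chat\setminus\{0,1,\infty\}$. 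One has $\pi(w)\to p$ only when $w\to\eta$ inside a fixed horoball at $\eta$, and the information available at that stage---that $\tilde g$ has angular limit $\eta$ at $\zeta$---does not guarantee that $\tilde g$ maps Stolz angles at $\zeta$ into horoballs at $\eta$. Indeed the implication ``$\tilde g$ has angular limit $\eta\in\partial\D$ at $\zeta$ $\Rightarrow$ $\pi\circ\tilde g$ has angular limit $p$ at $\zeta$'' is false as a general statement: in the half-plane model of the cover, with covering map $\lambda:\H\to\C\setminus\{0,1\}$ and cusp at $\infty$, take for $\tilde g$ a Riemann map of $\D$ onto the half-strip $\{0<\mathrm{Im}\,w<1,\ \mathrm{Re}\,w>0\}$; it extends continuously to a point $\zeta\in\partial\D$ with value the cusp, yet $\lambda\circ\tilde g$ has no radial limit there, because $\mathrm{Im}\,\tilde g$ stays bounded while $\lambda$ is periodic in $\mathrm{Re}\,w$. (A smaller imprecision occurs just before: eventually lying in one horoball does not by itself force $\tilde g\circ\gamma$ to land at the cusp; for that you need the nesting of the horoballs obtained from shrinking punctured neighbourhoods of $p$, or the fact that $\pi$ of the lifted curve tends to the puncture.)

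The argument can be repaired, but the repair is exactly the missing idea: in the cusp case one must apply Lindel\"of not to $\tilde g$ but to a bounded function adapted to the cusp. Work in half-plane coordinates, normalize so that $p=0$ is the puncture under the cusp at $\infty$, whose stabilizer in the deck group $\Gamma(2)$ is generated by $w\mapsto w+2$, and post-compose the lift with a deck transformation so that $\tilde g_{\H}\circ\gamma$ lands at $\infty$; recall that $\lambda(w)\to 0$ as $\mathrm{Im}\,w\to\infty$ \emph{uniformly} in $\mathrm{Re}\,w$ (by the $q$-expansion). Set $q(z):=e^{i\pi\tilde g_{\H}(z)}$, so that $|q|=e^{-\pi\,\mathrm{Im}\,\tilde g_{\H}}<1$. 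Since $g\circ\gamma\to 0$, the lifted curve eventually enters every horoball $\{\mathrm{Im}\,w>c\}$, hence $q\circ\gamma\to 0$; Lindel\"of's theorem for the bounded function $q$ gives $q\to 0$, i.e.\ $\mathrm{Im}\,\tilde g_{\H}\to+\infty$, on Stolz angles at $\zeta$, and the uniform convergence of $\lambda$ then yields the angular limit $p$ for $g$. With this modification your proof closes, and it is a legitimate alternative to the classical argument cited in the paper, which instead uses normality of $\{g\circ\phi:\phi\in\Aut(\D)\}$ (Montel, three omitted values) and a rescaling argument that treats both cases at once.
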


The second is part of a correspondence theorem between accesses to points in the boundary of a (possibly multiply connected) domain, and points in the boundary of $\D$ under the universal covering map. It can be found \cite[Theorem 5.9]{FerreiraJove} and is a generalization of the correspondence theorem for simply connected domains in \cite{bfjk_accesses}. We first give the definition of {\em access} (c.f. \cite[Definition 5.7]{FerreiraJove}).

\begin{defn}[Access] Let  $U\subset X$ be a domain, $z_0\in U$ and  $p\in\partial U$. A homotopy class  of curves  $\eta: [0, 1] \ra X$,   fixing the endpoint $\eta(1)$,  such that  $\eta([0,1))\subset U$ and  $\eta(1)=p$  is called an access to $p$ in $U$. 
\end{defn}

\begin{theo}[Correspondence Theorem for multiply connected domains]\label{thm:correspondence FerreiraJove}
Let $U \subset X$ be a hyperbolic  domain and $\pi\colon\D\to U$ a universal covering map. Suppose $p\in \partial U$ and let $A$ be an access to $p$ in $U$.  Then  there exists $\zeta\in \partial \D$, such that  any curve $\eta\in A$, has a  lift $\tilde\eta\subset\pi^{-1}(\eta)$  landing at $\zeta$.
\end{theo}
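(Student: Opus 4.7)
The plan is to prove the theorem in two stages: first, show that for a single curve $\eta \in A$, any chosen lift $\tilde\eta \subset \D$ lands at a unique point $\zeta \in \partial\D$; second, show that the landing point depends only on the access $A$, not on the chosen representative.

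For the first stage, fix $\eta \in A$ and a starting lift $\tilde z_0 \in \pi^{-1}(\eta(0))$, and let $\tilde\eta \colon [0,1) \to \D$ be the unique lift of $\eta$ starting at $\tilde z_0$. Since $\pi$ is a covering map and $\eta(t) \to p \in \partial U$, the lift $\tilde\eta(t)$ must eventually exit every compact subset of $\D$, for otherwise any subsequential limit in $\D$ would project to $p \in U$. Hence the cluster set $E$ of $\tilde\eta$ at $t=1$ is a nonempty, compact, connected subset of $\partial \D$. To show $E$ is a singleton, I argue by contradiction. If $E$ contained two distinct points $\zeta_1 \neq \zeta_2$, one can produce a sequence of crosscuts $C_n \subset \D$ (Jordan arcs with endpoints on $\partial \D$) separating $\zeta_1$ from $\zeta_2$ and shrinking toward $\partial\D$, such that $\tilde\eta$ crosses each $C_n$ infinitely often. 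The piece of $\tilde\eta$ between two consecutive crossings of a given $C_n$ projects under $\pi$ to a closed curve in $U$ based near $p$. Since these loops lie in arbitrarily small neighborhoods $V$ of $p$ in $X$ (by continuity of $\eta$ at $t=1$), and the connected components of $\pi^{-1}(V)$ for sufficiently small $V$ are topologically controlled, the loops must be null-homotopic, forcing their lifts to close up in $\D$---contradicting the fact that the corresponding arcs of $\tilde\eta$ have endpoints on opposite sides of $C_n$.

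For the second stage, given $\eta_0, \eta_1 \in A$, choose a homotopy $H \colon [0,1] \times [0,1) \to U$ through curves in $A$, with continuous extension $H(\cdot, 1) \equiv p$. Starting from the chosen lift of $\eta_0$ at parameter $s=0$, lift $H$ to $\tilde H \colon [0,1] \times [0,1) \to \D$ by the homotopy-lifting property. For each $s$, the slice $\tilde H(s, \cdot)$ is a lift of $H(s,\cdot) \in A$, which by Stage 1 lands at some $\zeta(s) \in \partial \D$. Because $H$ extends continuously to the compact square $[0,1]^2$ with $H(s,1) \equiv p$, the convergence $H(s,t) \to p$ is uniform in $s$. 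For any sufficiently small neighborhood $V$ of $p$ in $X$, the preimage $\pi^{-1}(V)$ decomposes into disjoint connected components clustering near $\partial \D$, and once $t$ is close enough to $1$ the continuous curve $s \mapsto \tilde H(s,t)$ lies entirely in one such component. Shrinking $V$ forces $s \mapsto \zeta(s)$ to be locally constant, and hence constant on $[0,1]$, so $\zeta(1) = \zeta(0) = \zeta$.

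The principal obstacle will be Stage 1: ruling out that the cluster set of a single lift is more than a point. The argument requires translating the topological fact that $\eta$ approaches $p$ at the end into a statement about crosscuts in $\D$, then using the covering-space structure to convert near-$p$ loops in $U$ into contractible loops whose lifts cannot skip between sides of a crosscut. A subsidiary care is needed in Stage 2 to ensure $\zeta(s)$ is genuinely locally constant rather than merely continuous as a map into $\partial \D$; the uniform convergence $H(s,t) \to p$ together with the discrete local sheet structure of $\pi$ near $\partial\D$ is what delivers this.
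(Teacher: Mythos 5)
This statement is not proved in the paper at all: it is quoted verbatim from \cite[Theorem 5.9]{FerreiraJove} (as a generalization of the correspondence theorem of \cite{bfjk_accesses}), so your argument can only be judged on its own merits — and Stage 1, which you yourself identify as the crux, has a genuine gap. The concrete false step is the assertion that ``the piece of $\tilde\eta$ between two consecutive crossings of a given $C_n$ projects under $\pi$ to a closed curve in $U$ based near $p$'': since $\pi\circ\tilde\eta=\eta$, the projection of a subarc of the lift is just the subarc $\eta|_{[t_1,t_2]}$, not a loop, so there is nothing to which null-homotopy or ``lifts must close up'' can be applied. Even if you repair this by closing the subarc with a short arc inside the component of $B(p,r)\cap U$ containing the tail of $\eta$, the resulting loop need not be null-homotopic in $U$ (near $p$ the domain may have infinitely generated fundamental group, e.g. $U=\D\setminus(\{0\}\cup\{1/n\})$ with $p=0$), and even when it is, its lift closing up yields no contradiction with the crosscut picture, because the connecting arc's lift may cross $C_n$ as well.

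More fundamentally, the strategy cannot work because it uses only covering-space topology, whereas landing of the lift is an analytic fact about holomorphic coverings. A radial twist homeomorphism $h(re^{i\theta})=re^{i(\theta+g(r))}$ of $\D$ with $g(r)\to\infty$ is a (trivial) covering of the simply connected domain $U=\D$, every loop is null-homotopic, and yet the preimage of the radius landing at $p=1$ clusters on all of $\partial\D$; so any proof ignoring conformality proves a false statement. Already in the simply connected case (Pommerenke, Prop.~2.14) the landing of $\pi^{-1}(\eta)$ is obtained from existence of radial limits a.e.\ together with the Riesz uniqueness theorem (a conformal map cannot have the same radial limit on a set of positive measure), and the multiply connected case in \cite{FerreiraJove} requires further work since universal coverings (e.g.\ the modular function) need not have radial limits a.e. Your Stage 2 inherits the same problem: the claim that $s\mapsto\zeta(s)$ is locally constant because the curves $s\mapsto\tilde H(s,t)$ eventually lie in one component of $\pi^{-1}(V)$ tacitly assumes that all curves tailing into a fixed small preimage component land at a single boundary point — which is exactly the unproved content of Stage 1 in uniform form. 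The homotopy-lifting skeleton of Stage 2 is the right bookkeeping, but the analytic heart of the theorem is missing.
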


 We are now ready to proof Lemma~\ref{lem:accesses}.

\begin{proof}[Proof of  Lemma~\ref{lem:accesses}]
Let $\pi:\D \to U$ be a universal covering map  and let $\gamma$ and  $h_t$ be as in the statement. Asymptotic paths  can only accumulate at $\partial W$ which is totally disconnected, hence there exists $p\in\partial W$ such that $\gamma(s)\to p$ as $s\to 1$.  Likewise, $\gamma_t(s)=h_t(\gamma(s))\to p$ as $s\to 1$ since $h_t$ is the identity on $\partial W$. Hence the curves are isotopic in $U$ through an isotopy that fixes the endpoint $p$, and therefore they belong to the same access to $p$ in $U$. 

 By Theorem~\ref{thm:correspondence FerreiraJove},  for every $t\in [0,1]$ there is lift  $\tilde \gamma_t \subset \D$ under $\pi$ which lands at some point $\zeta\in \partial \D$, independent of $t$.  Consider $g:=f\circ \pi:\D \to D$. By assumption  we have that $g\circ\tilde\gamma_t=f(\gamma_t)$ lands for every $t$,   hence they must do so at the same point by the Lehto-Virtanen Theorem.
\end{proof}

\subsection{Holomorphic motion of backward orbits}

Let 
$$
\mathcal{O}^-({w,g}):=\bigcup_{n\geq0}g^{-n}(\{w\})
$$
 denote the backwards orbit of $w$ under the map $g$.

We define the postsingular set of $f_\lam$ as $\bigcup_{n \geq 0} f_\lam^n(S(f_\lam))$
(without taking the closure).

\begin{prop}[Holomorphic motion of backward orbits] \label{prop:motiongo}
	Let $\{f_\la:W_\la \to X\}_{\la\in M}$ be a natural family of Ahlfors island maps. Let $z_0$ be a repelling periodic point of period $p\geq 1$  for $f:=f_{\la_0}$. Let $U$ be a simply connected neighborhood of $\la_0$ over which the analytic continuation of $z_0$, denoted by $z_0(\la)$, remains repelling, and suppose that  for all $\lam \in U$, $f_\lam$ has no non-persistent Misiurewicz relations of the form $f_\lam^n(v(\lam)) =z_0(\lam)$, where $v(\la)$ is either a critical or asymptotic value.  Then, there is a holomorphic motion 
	\[
	\begin{array}{rccc}
		H:&U \times \mathcal{O}^-(z_0,f^p) &\longrightarrow &\mathcal{O}^-(z_0(\la),f_\la^p)\\
		& (\la, z)&\longmapsto & z(\la)
	\end{array}
	\]
	preserving the dynamics of $f_\lam^p$.
\end{prop}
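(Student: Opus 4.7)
My plan is to construct the motion level by level along the backward orbit, working with $f$-preimages and restricting to multiples of $p$ at the end. After normalizing $\phi_{\lambda_0}=\psi_{\lambda_0}=\mathrm{id}$ via Remark~\ref{rem:marking}, the goal is to produce, for each $w\in\bigcup_{k\ge 0}f^{-k}(z_0)$, a holomorphic map $\lambda\mapsto w(\lambda)$ on $U$ with $w(\lambda_0)=w$ and $f_\lambda(w(\lambda))=(f(w))(\lambda)$ for $k\ge 1$; the desired motion on $\mathcal{O}^-(z_0,f^p)$ is then obtained by restriction, and compatibility with $f_\lambda^p$ is automatic. The base case $k=0$, $w=z_0$, is given by the hypothesis on the analytic continuation of $z_0$.

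For the inductive step, with $w'(\lambda)$ already defined on $U$ and $w\in f^{-1}(w')$, the implicit function theorem applied to $f_\lambda=\phi_\lambda\circ f\circ\psi_\lambda^{-1}$ produces a unique holomorphic germ $w(\lambda)$ near $\lambda_0$ solving $f_\lambda(w(\lambda))=w'(\lambda)$. Because $U$ is simply connected, the analytic continuation of this germ along paths has trivial monodromy, and it suffices to show that the continuation does not break down at any $\lambda^\ast\in U$. Compactness of $X$ and continuity of $f_\lambda$ imply that a breakdown forces the limiting value $y=\lim w(\lambda)$ either (i) to be a critical point of $f_{\lambda^\ast}$, or (ii) to lie on $\partial W_{\lambda^\ast}$; in both cases $w'(\lambda^\ast)=\lim f_\lambda(w(\lambda))$ is forced to be a critical or asymptotic value, hence $w'(\lambda^\ast)\in S(f_{\lambda^\ast})=\phi_{\lambda^\ast}(S(f))$, say $w'(\lambda^\ast)=\phi_{\lambda^\ast}(v_0)$ for some $v_0\in S(f)$.

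Combining this with the inductive identity $f_\lambda^{k-1}(w'(\lambda))=z_0(\lambda)$ evaluated at $\lambda=\lambda^\ast$, one obtains a Misiurewicz relation $f_{\lambda^\ast}^{k-1}(\phi_{\lambda^\ast}(v_0))=z_0(\lambda^\ast)$ for the singular-value motion $v(\lambda)=\phi_\lambda(v_0)$, which by our standing hypothesis must be persistent on all of $U$. The extension past $\lambda^\ast$ is then provided by the rigid formula $w(\lambda):=\psi_\lambda(w)$: it satisfies $f_\lambda(\psi_\lambda(w))=\phi_\lambda(f(w))=\phi_\lambda(v_0)$, which by the persistent Misiurewicz relation equals $w'(\lambda)$ throughout $U$ in the principal case $w'(\lambda)\equiv\phi_\lambda(v_0)$, and agrees with the IFT germ near $\lambda_0$ by local uniqueness. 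In the remaining degenerate subcase where the two sections $w'(\lambda)$ and $\phi_\lambda(v_0)$ of the preimage variety cross only at isolated parameters, case (ii) cannot arise---away from $\lambda^\ast$, $w'(\lambda)$ avoids all singular values, so $w(\lambda)$ remains a regular bounded preimage---and holomorphy of the extension at $\lambda^\ast$ is restored by Riemann's removable-singularity theorem.

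Injectivity of $H_\lambda$ follows from a parallel argument: any collision $w_1(\lambda^\ast)=w_2(\lambda^\ast)$ of distinct $w_1,w_2\in f^{-1}(w')$ forces $w'(\lambda^\ast)$ to be a critical value where two inverse branches merge; the persistence argument then identifies both motions with $\psi_\lambda(w_i)$, contradicting injectivity of the quasiconformal map $\psi_{\lambda^\ast}$. The dynamical compatibility $H_\lambda\circ f^p = f_\lambda^p\circ H_\lambda$ is built into the inductive relation. The main technical obstacle, and the reason the paper works with backward orbits rather than cycles in the Ahlfors-island setting, is case (ii) above: in general $\partial W_\lambda$ can contain a continuum on which logarithmic tracts accumulate in an uncontrolled way, and it is crucial to track a \emph{specific} preimage $w$ so that the rigid substitution $\psi_\lambda(w)$ can bypass any detailed analysis of tract geometry, exploiting only that $\psi_\lambda$ is a global quasiconformal homeomorphism of $X$.
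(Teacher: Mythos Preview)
Your overall strategy mirrors the paper's, but there is a genuine gap at the start of the inductive step: you invoke the implicit function theorem at $\lambda_0$ to produce the germ $w(\lambda)$, yet this fails precisely when $w$ is a critical point of $f_{\lambda_0}$, and that case is never addressed. The paper avoids this by an upfront dichotomy on the \emph{target} $w'$ rather than on the preimage $w$. If $w'\in S(f)$ at the basepoint, then the persistence hypothesis (applied with $n=k-1$ and $v=w'$), combined with the graph structure already established at level $k-1$, forces $w'(\lambda)\equiv\phi_\lambda(w')$; every preimage is then handled globally by the rigid formula $w(\lambda):=\psi_\lambda(w)$, with no appeal to the implicit function theorem at all. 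If instead $w'\notin S(f)$, the paper shows that $w'(\lambda)$ is \emph{never} a critical or asymptotic value of $f_\lambda$ for any $\lambda\in U$; consequently no preimage is ever critical, the implicit function theorem applies at every parameter, and the projection from the relevant irreducible component of $\{(\lambda,z):f_\lambda(z)=w'(\lambda)\}$ to $U$ is an unbranched covering of a simply connected domain, hence a biholomorphism.

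This upfront exclusion also shows that your ``degenerate subcase'' is vacuous. A crossing $w'(\lambda^*)=\phi_{\lambda^*}(v_0)$ would, by persistence, produce a second global section $\phi_\lambda(v_0)$ of the level-$(k-1)$ preimage variety; since $f^{k-1}(v_0)=z_0$, the point $v_0$ already has a motion at level $k-1$, and injectivity there forces $v_0=w'$, contradicting $w'\notin S(f)$. So the removable-singularity patch you propose is unnecessary, and as written it is incomplete: the assertion that ``away from $\lambda^*$, $w'(\lambda)$ avoids all singular values'' only excludes the single value $\phi_\lambda(v_0)$, not all of $S(f_\lambda)$.
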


\begin{proof} 
	 Let $\varphi_\la, \psi_\la:X\to X$ be the quasiconformal homeomorphisms such that $f_\la=\varphi_\la\circ f \circ \psi_\la^{-1}$ (see Definition \ref{def:natural}). We will prove the statement in several steps.
	We shall first show that for every  choice of $n\geq 1$, the set  $f_\lam^{-n}(\{z_0\})$ moves holomorphically with $\la \in U$. Then, we will prove that there are no collisions between the motion
	of points belonging to $f_\lam^{-m}(\{z_0\})$ and $f_\lam^{-k}(\{z_0\})$   when $k\neq m$ provided both are multiples of $p$. 
	For $n=1$ consider the set 
	\[
	Z_1=\{(\la,z) \mid f_\la(z)=z_0(\la)\},
	\]
	which is an analytic hypersurface of $U\times X$. Let 
	\[
	\pi_1: Z_1\to M
	\]
	denote the projection onto the first coordinate. \\

	\textbf{Step 1.} We first claim that  every irreducible component of $Z_1$ is the graph of a holomorphic map from $U$ to $X$. \\
	
 We will first treat the case where $z_0 \in S(f)$. Let $(\la_1,z_1) \in Z_1$. By assumption, Misiurewicz relations (if there are any) are persistent, and $f_{\la_0}^p(z_0)=z_0$ is such a relation. Therefore for all $\la \in U$, $z_0(\la) \in S(f_\la)$ and,  since $f_\la$ is a natural family,  $z_0(\la) = \varphi_\la(z_0 )$.
	Let $z_1(\la):=\psi_\la \circ \psi_{\la_1}^{-1}( z_1)$. Then, for all $\la \in U$: 
	\begin{align*}
		f_\la(z_1(\la)) &= \varphi_\la \circ f  \circ \psi_\la^{-1} \circ \psi_\la \circ  \psi_{\la_1}^{-1}(z_1)\\
		&= \varphi_\la \circ \varphi_{\la_1}^{-1} \circ \varphi_{\la_1} \circ f \circ \psi_{\la_1}^{-1}(z_1 ) \\
		&= \varphi_{\la}  \circ \varphi_{\la_1}^{-1}  \circ f_{\la_1}(z_1) \\
		&= \varphi_{\la}  \circ \varphi_{\la_1}^{-1} (z_0(\la_1) ) \\
		&= \varphi_\la( z_0 ) = z_0(\la). 
	\end{align*}
	Therefore, for all $\la \in U$, $(\la, z_1(\la)) \in Z_1$, which proves Step 1 in the case where $z_0 \in S(f)$.
	
	We now assume (in the rest of the proof of Step 1) that $z_0 \notin S(f)$. Let $(\lam_1,z_1) \in Z_1$ and let $Z$ denote the irreducible component of $Z_1$ containing $(\lam_1,z_1)$.
	Again by the persistence assumption, $z_0(\la_1)$ is not a critical value, and hence $z_1$ is not a critical point of $f_{\la_1}$.
	By the Implicit Function Theorem, $Z$ is then a complex manifold and $\pi_1: Z \to U$ is locally invertible. It is then well known that $\pi_1$ is a covering unless it has as asymptotic value.

	So suppose that $\la^*$  is an asymptotic value of $\pi_1$, i.e. there exists a path $(\la_t,z_t) \in Z_1$, $t\in [0,1)$ such that $\la_t\to \la^*$ while $z_t\to \partial W_{\la^*}$, as $t\to 1$.  Let $\varphi_t,\psi_t,f_t$ denote respectively $\varphi_{\la(t)}, \psi_{\la(t)}, f_{\la(t)}$. Then, by definition of $Z_1$,
	\[
	f_t(z_t)=(\varphi_t \circ f \circ \psi_t^{-1} ) (z_t)= z_0(\la(t)),
	\]
	and hence
	\begin{equation}\label{defeq}
		f(\psi_t^{-1} (z_t))=\varphi_t^{-1}(z_0(\la(t))).
	\end{equation}
	Now, when $t \to 1$ we have that $z_t\to \partial W_{\la^*}$, hence $\psi_t^{-1} (z_t)\to \partial W_{\la_0}$, since $\psi_{\la^*}(W_0)=W_{\la^*}$. On the other hand, since $z_0(\la(t))\to z_0(\la^*)$ it follows that $\varphi_t^{-1}(z_0(\la(t))) \to \varphi_{\la^*}^{-1}(z_0(\la^*))$, which makes this point an asymptotic value of $f$ by (\ref{defeq}), because $t \mapsto \psi_t^{-1} (z_t))$ is a curve converging to $\partial W_{0}$. Hence $z_0(\la^*)$ is an asymptotic value for $f_{\la^*}$, contradiction.
	Thus $\pi_1: Z \to U$ is a covering map, and since $U$ is simply connected, it is invertible, which implies that $Z$ is a holomorphic graph above $U$. This concludes the proof of Step 1. \\

	\textbf{Step 2.} The irreducible components of $Z_1$ are pairwise disjoint. \\
	 
	Indeed, let us assume for a contradiction that  $Z$ and $Z'$ are two distinct irreducible components of $Z_1$ and $(\la_1, z_1) \in Z \cap Z'$. Then $z_1$ must be a critical point for $f_{\la_1}$, and $z_0(\la_1)$ must be a critical value. But then by the proof of Step 1, both $Z$ and $Z'$ are the graph of the same map $\la \mapsto \psi_\la \circ \psi_{\la_1}^{-1}(z_1)$, so $Z=Z'$, a contradiction.\\

	\textbf{Step 3: Conclusion.} The set $\ocal^-(z_0(\la), f_\la^{p})=\bigcup_{n \geq 0} f_\la^{-np}(\{z_0(\la)\})$ moves holomophically over $U$.\\

	Steps 1 and 2 prove that the set $Z_1$ is a disjoint union of holomorphic graphs $\Gamma_i= \{(\la,z_i(\la)), \la \in U\}$ over $U$, i.e. that the set   $f_\la^{-1}(\{z_0(\la)\})$ moves holomorphically over $U$, with $H_\la(z_i(\la_0)):=z_i(\la)$.

	Now assume we have proven the existence of a holomorphic motion of   $f_\la^{-n+1}(\{z_0(\la)\})$. By considering $ z_n\in f^{-n}(\{z_0\})$ and applying the same arguments, we obtain  by induction that for every $n \in \N$,
	\[
	Z_n:=\{(\la,z)\mid f_\la^n(z)=z_0(\la), \text{but $f_\la^j(z) \neq z_0(\la)$ for any $j<n$}\}
	\]
	is a disjoint union of graphs over $U$, hence also moves holomorphically over $U$. Observe that by construction, 
	this holomorphic motion preserves the dynamics.
	
	To end the proof we need to show that  if $n_1\neq n_2$ then $Z_{n_1 p}$ and $Z_{n_2 p}$ are disjoint sets.  Let $g_\la:=f_\la^p$ so that $z_0(\la)$ is a fixed point of $g_\la$.
	
	So suppose $z(\la)$ and $\tilde{z}(\la)$ satisfy the defining equation
	\[
	g_\la^{n_1}(z(\la)) = g_\la^{n_2}(\tilde{z}(\la)) = z_0(\la),
	\]
	for some $n_1, n_2\in \N$  and assume they coincide at  some $\la=\la^*$, i.e. $z(\la^*)=\tilde{z}(\la^*)=z^*$. 
	
	Let $n=\max(n_1,n_2)$. Since $z_0(\la)$ is fixed for $g_\la$, we have that 
	\[
	g_\la^{n}(z(\la)) = g_\la^{n}(\tilde{z}(\la)) = z_0(\la),
	\]
	for all $\la\in U$. Then, either $z(\la)\equiv \tilde{z}(\la)$ in $U$ and we are done, or $z^*$ is a critical point of $g_{\la^*}^n$,  in which case we would have a non-persistent Misiurewicz relation, impossible by assumption.

\end{proof}

\begin{coro}[$J-$stability] \label{coro:jstable}
	Let $\{ f_\lam\}_{\la \in M}$ be a natural family of Ahlfors island map.
	Let $U \subset M$ be a simply connected domain over which a repelling periodic point $z(\la)$ moves holomorphically, and assume that there are no non-persistent Misiurewicz relations on $U$. 	
	Then the family $\{ f_\lam\}_{\la \in M}$ is $J$-stable on a neighborhood of $\lambda_0$.
\end{coro}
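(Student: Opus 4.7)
The proof plan is to build a holomorphic motion of $J(f_{\la_0})$ by extending the motion of a dense backward orbit, then verify both requirements of $J$-stability from Definition \ref{defi:jstab}.

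First, I would choose a repelling periodic point $z(\la_0)$ of some period $p$ for $f:=f_{\la_0}$ that is not a Picard exceptional value. This is possible: repelling cycles are dense in $J(f)$ by Theorem \ref{th:repdense}, while there are at most $N-1$ Picard exceptional values by the $N$-island property. Shrink $U$ so that $z(\la)$ remains repelling throughout; by hypothesis every Misiurewicz relation of the form $f_\la^n(v(\la))=z(\la)$ is persistent on $U$. Proposition \ref{prop:motiongo} then supplies a holomorphic motion
\[
H \colon U \times \mathcal{O}^-(z(\la_0),f^p) \longrightarrow X
\]
conjugating the dynamics of $f^p$ with that of $f_\la^p$.

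Second, I would extend $H$ to all of $J(f_{\la_0})$. In the non-exceptional case, $J(f)=J(f^p)$ by Theorem \ref{th:repdense} and Lemma \ref{lem:densepreimages} ensures that $\mathcal{O}^-(z(\la_0),f^p)$ is dense in $J(f_{\la_0})$; the exceptional cases can either be reduced to the non-exceptional one via Proposition \ref{prop:sparsexcep}, or handled by the classical arguments available for rational, entire, or torus endomorphisms. The $\lambda$-lemma then extends $H$ uniquely to a holomorphic motion $\bar H \colon U \times J(f_{\la_0}) \to X$, with each $\bar h_\la$ continuous and injective, and conjugating $f$ with $f_\la$ by continuity from the semi-conjugacy on the dense backward orbit. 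Applying Lemma \ref{lem:densepreimages} now to $f_\la$ identifies the image $\bar h_\la(J(f_{\la_0}))$ with $J(f_\la)$. This establishes item \eqref{i:item1} of Definition \ref{defi:jstab}.

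Third, I would verify item \eqref{i:item2}, which requires $\bar h_\la(v) = \phi_\la(v)$ for every $v \in S(f_{\la_0}) \cap J(f_{\la_0})$. The map $\la \mapsto \phi_\la(v)$ is holomorphic, equals $v$ at $\la_0$, lies in $S(f_\la)$ by property (a) of natural families, and by the persistence of Misiurewicz relations satisfies the same dynamical identities as $\bar h_\la(v)$ (for instance, if $f^k(v)$ lands on a repelling cycle, then persistence yields $f_\la^k(\phi_\la(v))$ on the analytic continuation of that cycle, matching the value of $f_\la^k(\bar h_\la(v))$). Approximating $v$ by points of $\mathcal{O}^-(z(\la_0),f^p)$ and invoking the uniqueness of the $\lambda$-lemma extension, together with these persistent dynamical identities, forces equality. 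The main obstacle is precisely this step: the motion $\bar h_\la$ is produced abstractly by density and is not a priori tied to the natural-family marking $\phi_\la$. In the Ahlfors island setting item \eqref{i:item2} is not automatic from item \eqref{i:item1} (contrast with Proposition \ref{prop:equivalent_definitions}), and the persistence assumption on Misiurewicz relations is the essential lever linking the two motions.
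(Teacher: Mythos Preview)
Your overall plan matches the paper's: apply Proposition~\ref{prop:motiongo}, take closure via the $\lambda$-lemma, then check the conjugacy. But there is a genuine gap in your second step. Proposition~\ref{prop:motiongo} yields a motion conjugating $f^p$ with $f_\la^p$, and the set $\mathcal{O}^-(z_0,f^p)$ is \emph{not} $f$-invariant when $p>1$: if $f^{np}(z)=z_0$ then $f^{np-1}(f(z))=z_0$, and since $z_0$ has exact period $p$ this does not place $f(z)$ in any $f^{-mp}(\{z_0\})$. So your sentence ``conjugating $f$ with $f_\la$ by continuity from the semi-conjugacy on the dense backward orbit'' only delivers the $f^p$-conjugacy on $J(f)$, not the $f$-conjugacy required in item~\eqref{i:item1}. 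The paper supplies the missing argument: for a repelling periodic point $x$ of period $m$, the $f^p$-conjugacy shows $H_\la(x)$ is a fixed point of $f_\la^{mp}$ equal to $x$ at $\la_0$; since repelling fixed points of $f_\la^{mp}$ are isolated and move holomorphically by the Implicit Function Theorem, $H_\la(x)$ coincides with the analytic continuation $x_\la$ near $\la_0$. Applying this to $f(x)$ as well gives $H_\la(f(x))=f_\la(x_\la)=f_\la(H_\la(x))$; density of repelling cycles (Theorem~\ref{th:repdense}), continuity, and the Identity Theorem then propagate $H_\la\circ f=f_\la\circ H_\la$ to all of $J(f)$ over $U$.

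On item~\eqref{i:item2}: you rightly flag this as the delicate point, but your argument does not close. A general $v\in S(f_{\la_0})\cap J(f_{\la_0})$ need not be eventually repelling-periodic, so the ``persistent dynamical identities'' you invoke may be vacuous; and approximating $v$ by points of $\mathcal{O}^-(z_0,f^p)$ does not help, since those approximants are typically not singular values and therefore carry no information about $\phi_\la$. For what it is worth, the paper's own proof of this corollary does not address item~\eqref{i:item2} explicitly either.
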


\begin{proof}

	By Lemma \ref{lem:densepreimages}, the set $\mathcal{O}^-(z_0, f^p)$ is dense in $J(f)$.
	Therefore, by the classical $\lam$-lemma, this holomorphic motion extends to a holomorphic motion of $J(f)$ 
	\[
	\begin{array}{rccc}
		H:&U \times J(f) &\longrightarrow &J(f_\la)\\
		& (\la, z)&\longmapsto & H_\la(z):=z(\la)
	\end{array}
	\] which by continuity still preserves the dynamics of $f^p$, i.e.  
	\[
	H_\lam \circ f^p(z)=f_\lam^p \circ H_\lam(z)
	\]
	for all $z \in J(f)$.
	We claim that we must then have in fact 
	$$H_\lam \circ f(z)=f_\lam \circ H_\lam(z)$$
		for all $z \in J(f)$.
	By Theorem \ref{th:repdense} and continuity, it is enough to prove this only for repelling periodic points.
	Let $x$ be a repelling periodic point of period $m$ for $f=f_{\lam_0}$. 
	Reducing $U$ if necessary, let $x_\lam$, $\la\in U$,  denote its local analytic continuation as a  repelling periodic point of period $m$ for $f_\lam$ (given by the Implicit Function Theorem). 
	Since $f^{mp}(x)=x$,
	we have $f_\lam^{mp} \circ H_\lam(x)= H_\lam(x)$ for all $\lam \in U$
	and since $H$ is a holomorphic motion with basepoint $\lam_0$, we have $H_{\lam_0}(x)=x$.
	By continuity of $H$ and since repelling fixed points of $f_\lam^{mp}$ are isolated and move holomorphically, we must have $H_\lam(x)=x_\la$ locally for $\lam$ close to $\lam_0$, 
	and therefore 
		$$H_\lam \circ f(z)=f_\lam \circ H_\lam(z)$$
	for all {repelling} periodic points of $f$ (hence for all $z \in J(f)$ by continuity) and for all $\lam$ in a neighborhood of $\lam_0$.
	The result finally follows from the Identity Theorem applied on $M$ to the holomorphic maps 
	$\lam \mapsto H_\lam \circ f(z)$ and $\lam \mapsto f_\lam \circ H_\lam(z)$, $z \in J(f)$.
\end{proof}

\begin{prop}\label{prop:passivity}
	Let $\{f_\la\}_{\la \in M}$ denote a natural family of Ahlfors island maps. Assume that this family is $J$-stable; then all singular values are passive on $U$.
\end{prop}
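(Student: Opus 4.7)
The plan is to argue by contradiction, combining density of Misiurewicz parameters in the activity locus with the observation that $J$-stability forces any such Misiurewicz relation to be persistent. Suppose the family $\{f_\la\}$ is $J$-stable on $U$ but some singular value $v(\la) = \phi_\la(v)$ is active at some $\la_0 \in U$. After possibly shrinking $U$, we may assume it is a simply connected open neighborhood of $\la_0$.

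First I would invoke Proposition \ref{prop:misiu}: choose a repelling periodic point $z(\la_0)$ of $f_{\la_0}$ of period at least three that is not Picard exceptional (such a point exists by Theorem \ref{th:repdense} together with the finiteness of Picard exceptional values and the infiniteness of $J(f_{\la_0})$). The proposition produces a parameter $\la' \in U$ arbitrarily close to $\la_0$ and an integer $n \geq 0$ such that $f_{\la'}^n(v(\la')) = z(\la')$, where $z(\la)$ denotes the holomorphic continuation of the chosen cycle. Since the forward orbit of $v(\la')$ enters a repelling periodic cycle contained in $J(f_{\la'})$, we have $v(\la') \in S(f_{\la'}) \cap J(f_{\la'})$.

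The key step is to re-base the $J$-stability data at $\la'$: the rebased motion $\tilde h_\la := h_\la \circ h_{\la'}^{-1}$ and rebased marking $\tilde \phi_\la := \phi_\la \circ \phi_{\la'}^{-1}$ can be checked directly to satisfy the conditions of Definition \ref{defi:jstab} relative to the basepoint $\la'$. Applying condition (2) of that definition at $\la'$ to the singular value $v(\la')$ yields $v(\la) = \tilde h_\la(v(\la'))$ for every $\la \in U$. Iterating the dynamical conjugacy from condition (1) then gives
\[
f_\la^n(v(\la)) = \tilde h_\la(f_{\la'}^n(v(\la'))) = \tilde h_\la(z(\la')) = z(\la),
\]
where the final equality follows from the uniqueness of holomorphic continuation of repelling periodic points combined with the fact that $\tilde h_\la$ conjugates dynamics. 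Hence the Misiurewicz relation is persistent on all of $U$.

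In particular, at $\la_0$ we have $f_{\la_0}^n(v) = z(\la_0)$, so the orbit of $v$ enters the periodic cycle of $z(\la_0)$ at step $n$; for every $m \geq n$ and every $\la \in U$, $f_\la^m(v(\la)) = f_\la^{m-n}(z(\la))$ lies in the finite periodic cycle of $z(\la)$. The family $\{\la \mapsto f_\la^m(v(\la))\}_{m \in \N}$ thus reduces to finitely many distinct holomorphic functions taking values in the compact Riemann surface $X$, and is consequently normal on $U$. By clause (2) of the definition of passivity, this shows $v(\la)$ is passive at $\la_0$, contradicting our standing assumption. The main obstacle is precisely the basepoint change just described: the $J$-stability data is naturally tied to the fixed basepoint $\la_0$, whereas the compatibility condition (2) that forces persistence of the Misiurewicz relation has to be applied at the nearby parameter $\la'$, where the singular value happens to fall in the Julia set.
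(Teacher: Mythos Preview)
Your overall strategy matches the paper's: use Proposition~\ref{prop:misiu} to produce a nearby Misiurewicz parameter, then combine the dynamical conjugacy with the compatibility condition~(\ref{i:item2}) to reach a contradiction. The paper's argument is more direct, however: it fixes the repelling periodic point $z$ \emph{outside} the forward orbit of $v$ at $\la_0$, applies condition~(\ref{i:item2}) once to write $v_{\la_1}=h_{\la_1}(v)$, and then the conjugacy $h_{\la_1}$ immediately gives $f_{\la_1}^n(h_{\la_1}(v))=h_{\la_1}(f_{\la_0}^n(v))\neq h_{\la_1}(z)$, contradicting the relation produced by Proposition~\ref{prop:misiu}. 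No persistence argument and no change of basepoint are needed.

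The gap in your proof is the rebasing step. You assert that the rebased data $(\tilde h_\la,\tilde\phi_\la)=(h_\la\circ h_{\la'}^{-1},\,\phi_\la\circ\phi_{\la'}^{-1})$ ``can be checked directly'' to satisfy Definition~\ref{defi:jstab} at basepoint $\la'$; condition~(\ref{item1}) does transfer, but condition~(\ref{i:item2}) does \emph{not}. Unwinding the rebased condition~(\ref{i:item2}) at the specific value $v(\la')$ gives $h_\la\bigl(h_{\la'}^{-1}(v(\la'))\bigr)=\phi_\la(v)$ for all $\la$; evaluating at $\la=\la_0$ forces $h_{\la'}^{-1}(v(\la'))=v$, hence $v\in J(f_{\la_0})$. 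But that is precisely the hypothesis you were trying to sidestep by moving the basepoint to $\la'$---if you already knew $v\in J(f_{\la_0})$, you could apply condition~(\ref{i:item2}) at the original basepoint and finish as the paper does (which, in fairness, tacitly makes the same assumption when it writes $v_\la=h_\la(v)$). So the rebasing is circular, and the ``main obstacle'' you correctly identify is not actually resolved by it.
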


\begin{proof}
	Let $\lam_0 \in M$, and let $h_\la: J(f_{\la_0}) \to J(f_\la)$ 
	denote the dynamical holomorphic motion of the Julia set as in Definition~\ref{defi:jstab}.	Let $v \in S(f_{\la_0})$ and $v_\la:=\varphi_\la(v)=h_\la(v)$. 	Let $z$ be a repelling periodic point of $f_{\la_0}$  of period at least 3, with infinitely many preimages 
	and  which is not in the forward orbit of $v$. (Such points always exist by the island property). Assume for a contradiction
	that $v_\la$ is active at  $\la_0$.
	By Proposition \ref{prop:misiu}, there exists $\la_1 \in M$ close to $\la_0$ and $n \in \N$ 
	such that $f_{\la_1}^n(v_{\la_1}) = h_{\la_1}(z)$. {\violet Since $v_{\la_1}=\phi_{\la_1}(v)=h_{\la_1(v)}$ by item \eqref{i:item2} in the definition of $J$-stability,  this gives $f_{\la_1}^n(h_{\la_1}(v)) = h_{\la_1}(z)$. 	But since $h_{\la_1} : J(f_{\la_0}) \to J(f_{\la_1})$ is a topological conjugacy, and  $f^n(v) =f_{\la_0}^n(h_{\la_0}(v)) \neq h_{\la_0}(z)=z$,} this is not possible.	Therefore, $v_\la$ must be passive on $M$.
\end{proof}

\begin{proof}[Proof of Theorem \ref{th:mssahlfors}  ]
	Let $\{f_\la\}_{\la \in M}$ be a natural family of Ahlfors maps.
	Proposition  \ref{prop:passivity} proves that $J$-stability implies passivity of all singular values.
	
	Let us prove that conversely, if $\la_0 \in M$ and if 
	there is a neighborhood $V$ of $\la_0$ such that all critical and asymptotic values are passive on $V$, then there is a neighborhood $U \subset V$ of $\la_0$ such that $\{f_\lam\}_{\la \in U}$ is $J$-stable.

	Let $z_0(\la_0)$ denote a repelling periodic point of period $p \geq 1$
	for $f_{\la_0}$. 
	Let $U \subset V$ denote a simply connected neighborhood $\la_0$ on which 
	$z_0(\la)$ moves holomorphically as a repelling periodic point of $f_\la$.

	By Lemma ~\ref{lem:misiuimpliesactive}, none of the maps $f_\la$, $\la \in U$, may have any non-persistent Misiurewicz relation.  We can therefore apply Corollary \ref{coro:jstable}, which asserts that $\{f_\la\}_{\la \in U}$ is indeed $J$-stable.
\end{proof}

\begin{rem}
	The proof given implies the following: if all critical and asymptotic values are passive, then all singular values are passive. More generally, using Proposition \ref{prop:activity} one could prove directly  that the set of active singular values at a given  parameter $\lam_0 \in M$ is closed.
\end{rem}

\section{Finite type maps and attracting cycles}

We consider, as above,  a natural family $f_\la = \varphi_\la \circ f \circ \psi_\la^{-1}:W_\la \to X$ of finite type maps. In this section we prove some results that are necessary for Theorem \ref{th:MSS}, but which are also of independent interest. 

In the context of \cite{ABF}, where we dealt with natural families of meromorphic maps, i.e. $X=\chat$ and $\partial W_\la=\{\infty\}$ for all $\la\in M$, we were able to prove (see Theorem B, the Accessibility Theorem in \cite{ABF}) that certain active parameter values (those involving asymptotic values mapping eventually to infinity), say $\la_0$,  can always be accessed by a curve of parameters $\la(t)$ such that $f_{\la(t)}$ possesses an attracting cycle, whose multiplier converges to 0 as $\la(t)$ tends to $\la_0$. Despite $\la_0$ being in the bifurcation locus, this property granted them the name of {\em virtual centers}, in a analogy to the centers of hyperbolic components in rational maps. 

The results we prove in this section are the best possible generalization of this property. More precisely, we shall find a {\em sequence} of parameters (instead of a curve) with attracting cycles of the same period and arbitrarily small multiplier (Theorem~\ref{prop:thb}). The difficulty arising in the new contest of finite type maps is that one must account for the possibility of a tract which accumulates in a complicated way on $\partial W$, something which cannot happen for meromorphic maps. For these reason, we shall need a more elaborate definition of what was then called a {\em virtual cycle}.

\subsection{Creation of attracting cycles  near virtual cycles}\label{sec:creation attracting}
%

%

\begin{defi}[Simple virtual cycle]\label{def:vc}
	Let $f: W \to X$ be a finite type map, and let $T$ be a tract above an asymptotic value $v$. We say that $x \in \partial T \cap \partial W$ is a good point in $\partial T$ if it is in the accumulation locus of 
	an oriented hyperbolic geodesic $\gamma \subset T$ which has no limit point in $W$.

	If there is $n \in \N$ such that $f^n(v)$ is a {\em good point} in $\partial T$, we say that $v, f(v), \ldots, f^n(v)$ is a {\em simple virtual cycle} of length $n+1$. 
\end{defi}

Here is an equivalent formulation of the definition above. Let $g : \H \to T$ be a conformal isomorphism between the left half-plane $\H$ and the simply connected tract $T$, normalized so that $f \circ g(z) \to v$ as $\re z \to -\infty$. Then $x \in \partial T \cap \partial W$ is a good point if and only if it there is a constant $y_0 \in \R$ and a sequence $t_k \to +\infty$ such that $g(-t_k+i y_0) \to x$. Moreover, up to choosing an appropriate normalization of $g$, we can assume without loss of generality that $y_0=0$.

\begin{rem}
	In the case of finite type \emph{meromorphic} maps,
	$\partial W=\{\infty\}$ so that the only good point is always $\infty$. Therefore, this definition agrees with the one from \cite{ABF}, in the sense that every simple virtual cycle is a virtual cycle as defined in \cite[Definition 1.3]{ABF} (see also \cite{FK}).
\end{rem}

\begin{defi}[Non-persistency]
Let $v_{\la_0}, f_{\la_0}(v_{\la_0}), \ldots, f_{\la_0}^n(v_{\la_0})$ be a simple virtual cycle for some $\la_0\in M$ and $n\in \N$. We say that the cycle is {\em non persistent}, if $f^n_{\la_0}(v)$ maps to $\partial W_{\la_0}$ non-persistently (see Definition \ref{def:np}).
\end{defi}

 The main theorem in this section is the following.

\begin{theo}[Attracting cycles]\label{prop:thb}
	Assume that there is $\lam_0 \in M$ such that $f_{\lam_0}$ has a non-persistent simple virtual cycle of length $n+1$. Then there is a sequence $\lam_k \to \lam_0$ such that 
	$f_{\lam_k}$ has an attracting cycle of period $n+1$, of multiplier $\rho_k \to 0$, which captures 
	the asymptotic value $v_{\lam_k}$.
\end{theo}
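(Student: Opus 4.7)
The plan is to perturb $\lambda_0$ so that the forward orbit of the asymptotic value $v_\lambda$ almost closes up at period $n+1$, and then extract a genuine attracting cycle near the orbit via a Rouché argument in logarithmic coordinates on the tract. First normalize a conformal isomorphism $g : \H_L \to T$ by $f \circ g(z) = v + e^z$, so that the good-point hypothesis yields a sequence $t_k \to +\infty$ with $g(-t_k) \to x = f^n(v)$ in $X$. Set $L_\lambda(w) := \log(w - v_\lambda)$ on a fixed small disk $D$ around $v_{\lambda_0}$; since $f_\lambda : T_\lambda \to D \setminus \{v_\lambda\}$ is a universal cover of a punctured disc, the formula $\tilde g_\lambda := (L_\lambda \circ f_\lambda)^{-1}$ defines a holomorphic family of biholomorphisms $\tilde g_\lambda : \H_L \to T_\lambda$ with $\tilde g_{\lambda_0} = g$ and $f_\lambda \circ \tilde g_\lambda(z) = v_\lambda + e^z$.

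For the perturbation step, set $A(\lambda) := f_\lambda^n(v_\lambda)$, which is holomorphic in $\lambda$ with $A(\lambda_0) = x$. Non-persistency of the virtual cycle, combined with Lemma~\ref{lem:qr} applied to $G(\lambda) := \psi_\lambda^{-1}(A(\lambda))$, shows that $G$ is open at $\lambda_0$ (after restricting to a one-parameter slice of $M$). Hence for each sufficiently large $k$ one can find $\lambda_k \to \lambda_0$ solving $A(\lambda_k) = \tilde g_{\lambda_k}(-t_k)$, giving
\[
f_{\lambda_k}^{n+1}(v_{\lambda_k}) \;=\; f_{\lambda_k}\bigl(\tilde g_{\lambda_k}(-t_k)\bigr) \;=\; v_{\lambda_k} + e^{-t_k},
\]
so the orbit of $v_{\lambda_k}$ almost closes up after $n+1$ iterates.

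For the fixed-point construction, define $\Phi_{\lambda_k}(z) := \tilde g_{\lambda_k}^{-1}\bigl(f_{\lambda_k}^n(v_{\lambda_k} + e^z)\bigr)$ on the set where $f_{\lambda_k}^n(v_{\lambda_k} + e^z) \in T_{\lambda_k}$; its fixed points correspond, under $\tilde g_{\lambda_k}$, to fixed points of $f_{\lambda_k}^{n+1}$ in $T_{\lambda_k}$ with the same multiplier. Setting $a_\lambda := (f_\lambda^n)'(v_\lambda) \neq 0$ (the case with critical points in the orbit requires a routine modification using branched covers), a Taylor expansion yields
\[
\Phi_{\lambda_k}(z) \;=\; -t_k \;+\; \frac{a_{\lambda_k}}{\tilde g_{\lambda_k}'(-t_k)}\, e^{z} \;+\; O(e^{2z}).
\]
On a suitable disc $\D(-t_k,\delta_k) \subset \H_L$, the map $\Phi_{\lambda_k}$ is a holomorphic self-map close to the constant $-t_k$, so by Rouché (or Brouwer) it has a unique attracting fixed point $z^*_k$ with multiplier $\rho_k = O(e^{-t_k}/|\tilde g_{\lambda_k}'(-t_k)|)$. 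The corresponding attracting periodic point $\tilde g_{\lambda_k}(z^*_k) \in T_{\lambda_k}$ has exact period $n+1$ (since the $n+1$ points $v, f(v),\ldots, f^n(v)$ of the virtual cycle are distinct) and captures $v_{\lambda_k}$, as $f_{\lambda_k}^{n+1}(v_{\lambda_k})$ lies within Euclidean distance $\approx e^{-t_k}$ of the cycle.

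The main technical obstacle is the multiplier estimate $\rho_k \to 0$, which requires controlling $|e^{-t_k}/\tilde g_{\lambda_k}'(-t_k)|$ as $k \to \infty$. This is precisely where the good-point hypothesis earns its name: since the hyperbolic geodesic $t \mapsto g(-t)$ is forced to accumulate on $\partial W$ rather than on an interior point of $W$, Koebe distortion applied to the univalent $g$ bounds $|g'(-t_k)|$ below by a quantity of order $\dist(g(-t_k), \partial T)/t_k$, which stays well above $e^{-t_k}$ along a subsequence. Alternatively, one can proceed more softly via Schwarz--Pick: the map $\Phi_{\lambda_k}$ sends an unbounded region of $\H_L$ into a region of small hyperbolic diameter around $-t_k$, so its hyperbolic derivative at any fixed point is automatically small, which translates into the Euclidean bound $\rho_k \to 0$ and bypasses the explicit distortion estimate altogether.
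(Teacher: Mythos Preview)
Your strategy---perturb $\lambda_0$ so that $f_\lambda^n(v_\lambda)$ lands deep in the tract, then find an attracting fixed point of $f_\lambda^{n+1}$ near the ``almost-closed'' orbit---matches the paper's approach exactly, and the perturbation step via Lemma~\ref{lem:qr} is the right idea. The difficulty lies in your choice of tract parametrization, and this creates a genuine gap in the multiplier estimate.

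You parametrize $T_\lambda$ by a holomorphic family of Riemann maps $\tilde g_\lambda : \H_L \to T_\lambda$ normalized dynamically via $f_\lambda$. Leaving aside that the exact formula $f_\lambda \circ \tilde g_\lambda(z) = v_\lambda + e^z$ does not quite hold (the image $V_\lambda = \varphi_\lambda(V)$ is a quasidisc, not a round disc centered at $v_\lambda$), the real issue is that your multiplier bound reads $\rho_k = O(e^{-t_k}/|\tilde g_{\lambda_k}'(-t_k)|)$, and you need this to tend to zero. You then invoke Koebe or Lemma~\ref{lem:nocusp} for $g = \tilde g_{\lambda_0}$, but the map appearing in the estimate is $\tilde g_{\lambda_k}$, not $g$. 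Lemma~\ref{lem:nocusp} is stated for a single univalent map and gives no uniformity over the family $(\tilde g_{\lambda_k})_k$; since both $\lambda_k$ and $t_k$ vary, you have no control on $|\tilde g_{\lambda_k}'(-t_k)|$. The Schwarz--Pick alternative you sketch does not bypass this: bounding the hyperbolic diameter of $\Phi_{\lambda_k}(\D(-t_k,R))$ still requires estimating $|(\tilde g_{\lambda_k}^{-1})'|$ near $\tilde g_{\lambda_k}(-t_k)$, which is the same derivative.

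The paper sidesteps this precisely by parametrizing $T_\lambda$ via the quasiconformal motion $\psi_\lambda \circ \Phi^{-1}$ rather than by a varying Riemann map. The perturbation equation then becomes $G(\lambda) = \Phi^{-1}(-t_k)$ with a $\lambda$-independent right-hand side (and $G$ a branched cover by Lemma~\ref{lem:qr}), and the derivative estimate in Lemma~\ref{lem:thbcore} is entirely about the fixed map $g = \Phi^{-1}$; the $\lambda$-dependence is absorbed into the quasiconformal distortion Lemma~\ref{lem:distortion}, which replaces $|g'(-t_k)|$ by $|g'(-t_k)|^{1\pm\epsilon}$. Lemma~\ref{lem:nocusp} then applies directly. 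The contraction argument itself is done by showing $f_{\lambda_k}^{n+1}(U_{t_k}) \Subset U_{t_k}$ and invoking Schwarz, rather than Rouch\'e in logarithmic coordinates, but this is a matter of packaging; the essential difference is the parametrization of the tracts.
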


For the proof, we  will use the following   technical lemmas, proved in \cite{ABF}.

\begin{lem}[Hyperbolic distance in tracts { \cite[Lemma 4.1]{ABF}}]\label{lem:lasse} Let $T \subset X$ be a simply connected hyperbolic  domain, $\rho_T$ be the hyperbolic density in $T$ with respect to a continuous hermitian metric on $X$, and let $z,w\in T$. Then 
	\begin{equation}\label{eq:lasse}
		\dist_T(z,w)\geq\frac{1}{2}\left|\ln\frac{\dist(w,\partial T)}{\dist(z,\partial T)}\right|.
	\end{equation}
\end{lem}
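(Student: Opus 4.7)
The plan is to derive \eqref{eq:lasse} by combining two classical ingredients: the Koebe $1/4$ lower bound for the hyperbolic density of a simply connected hyperbolic domain, and the fact that the distance-to-boundary function is $1$-Lipschitz. This is a standard quasi-hyperbolic comparison argument.

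First, I would establish the pointwise estimate
\[
\rho_T(z) \;\geq\; \frac{1}{2\,\dist(z,\partial T)}, \qquad z \in T.
\]
On a plane domain this is classical: if $\pi\colon \D \to T$ is a Riemann map with $\pi(0)=z$, then the Koebe $1/4$ theorem yields $\dist(z,\partial T) \geq |\pi'(0)|/4$, while the pullback of the Poincar\'e metric gives $\rho_T(z) = 2/|\pi'(0)|$, whence the bound. On the Riemann surface $X$, I would work in a holomorphic chart around $z$ in which the Hermitian density is a continuous positive function $\sigma$; the Koebe argument goes through in this chart, and continuity of $\sigma$ together with compactness of $X$ propagate the bound globally, up to a multiplicative factor tending to $1$ as $\dist(z,\partial T)\to 0$, which is the regime of interest for tracts.

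Second, given this pointwise bound, let $\gamma:[0,1]\to T$ be any piecewise $C^1$ path from $z$ to $w$, and write $\delta(\cdot) := \dist(\cdot, \partial T)$. Integrating the density bound along $\gamma$ yields
\[
\int_0^1 \rho_T(\gamma(t))\,|\dot\gamma(t)|\,dt \;\geq\; \frac{1}{2} \int_0^1 \frac{|\dot\gamma(t)|}{\delta(\gamma(t))}\,dt.
\]
Because $\delta$ is the infimum of the $1$-Lipschitz functions $x\mapsto d_X(x,p)$ for $p\in\partial T$, it is itself $1$-Lipschitz, so the chain rule gives $\bigl|\tfrac{d}{dt}\ln\delta(\gamma(t))\bigr| \leq |\dot\gamma(t)|/\delta(\gamma(t))$ for almost every $t$. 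Integrating in $t$ and applying the triangle inequality,
\[
\Bigl|\ln\delta(w) - \ln\delta(z)\Bigr| \;\leq\; \int_0^1 \frac{|\dot\gamma(t)|}{\delta(\gamma(t))}\,dt.
\]
Combining the two displayed inequalities and taking the infimum over admissible paths $\gamma$ produces \eqref{eq:lasse}.

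The only delicate point is the density bound on a Riemann surface equipped with a merely continuous Hermitian metric: the sharp constant $1/2$ is tied to the Euclidean Koebe estimate, and on $X$ one recovers it asymptotically as $\delta(z)\to 0$. In the intended applications the relevant points approach $\partial W\subset \partial T$, so any small multiplicative loss is absorbed; meanwhile the $1$-Lipschitz property of $\delta$ holds uniformly on $X$ once a background metric is fixed, so the rest of the argument is insensitive to the choice of metric.
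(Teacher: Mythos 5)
Your second step is fine: once one has the pointwise estimate $\rho_T(x)\ge \frac{1}{2\,\dist(x,\partial T)}$, integrating it along paths and using that $\dist(\cdot,\partial T)$ is $1$-Lipschitz for $d_X$ gives \eqref{eq:lasse} by exactly the argument you describe, and this is the standard route (the present paper gives no proof at all, quoting the lemma from \cite{ABF}). The genuine gap is in your first step, precisely at the point you yourself flag as delicate. The Koebe quarter theorem does not ``go through in a chart'': to obtain $\rho_T(z)\ge\frac{1}{2\dist(z,\partial T)}$ one needs a Riemann map $\pi:\D\to T$ of the \emph{whole} domain $T$, with $T$ realized as a plane domain in the same coordinates in which $\dist(z,\partial T)$ is Euclidean distance. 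A tract $T\subset X$ is not contained in any single chart, so ``$|\pi'(0)|$'' has no meaning there, and continuity of the metric plus compactness of $X$ cannot replace this global input. Moreover, your claim that the constant is recovered up to a factor tending to $1$ as $\dist(z,\partial T)\to 0$ is not true in this generality: take $X=\widehat{\mathbb{C}}$, $T=\widehat{\mathbb{C}}\setminus[0,1]$, and a continuous hermitian metric that equals $|dz|$ near $[-1,1]$. Uniformizing via $z\mapsto\sqrt{z/(1-z)}$ one computes $\rho_T(-t)=\frac{1}{2t(1+t)}$ while $\dist(-t,\partial T)=t$, so $\rho_T\cdot\dist(\cdot,\partial T)<\frac12$ at every such point even though the metric is flat there; worse, if $X\setminus T$ is small compared to $\dist(z,\partial T)$ (complement a tiny arc), the product $\rho_T(z)\dist(z,\partial T)$ can be made arbitrarily small. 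So no local or soft argument produces the constant $\frac12$, and in any case you concede only a weakened version of \eqref{eq:lasse}, not the stated inequality.

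In fact the same example shows the sensitivity is in the statement, not just in your method: for $z=-t_1$, $w=-t_2$ with $0<t_2<t_1<1$ one gets $\dist_T(z,w)\le\int_{t_2}^{t_1}\frac{dt}{2t(1+t)}=\frac12\ln\frac{t_1(1+t_2)}{t_2(1+t_1)}<\frac12\ln\frac{t_1}{t_2}$, so the clean constant really relies on the setting of \cite{ABF}, where the tract lies in $\C$ and boundary distance is measured in (or with explicit comparison to) the Euclidean metric, so that Koebe applies verbatim to a genuine Riemann map of $T$. To repair your argument you must supply that reduction explicitly --- realize $T$ conformally as a plane domain with quantitative control relating $d_X$ near $\partial T$ to the Euclidean distance (which uses that $X\setminus T$ has definite size, and in general costs a constant or an additive error) --- rather than appeal to continuity and compactness; as written, the key inequality $\rho_T\ge\frac{1}{2\dist(\cdot,\partial T)}$ is asserted, not proved, and is false as stated on $(X,\sigma)$.
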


\begin{lem}[Asymptotic derivative of the Riemann map { \cite[Lemma 4.2]{ABF}}]\label{lem:nocusp} Let $\H$ be the left half plane,  $T$ be a simply connected hyperbolic  domain, $g:\H\ra T$ be a Riemann map. Then for every $\alpha>0$, 
	\begin{equation}\label{eq:nocusp}
		\lim_{t \to +\infty} |g '(-t)| e^{\alpha t} =\infty. 
	\end{equation}
\end{lem}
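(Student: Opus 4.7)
My plan is to uniformize the tract $T$ of the asymptotic value $v$ by the left half-plane $\H$, pull back the first-return map $f^{n+1}$ to $\H$, and solve the resulting fixed-point equation near the good point $x$ by a contraction mapping argument. The super-exponential decay of the Riemann map derivative provided by Lemma~\ref{lem:nocusp} will ensure both that the contraction is strong enough and that the resulting multiplier tends to zero.

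\textbf{Setup and reduction to a fixed-point equation.} Let $g : \H \to T$ be a Riemann map normalized so that $f \circ g(\zeta) = v + e^\zeta$; by the good-point property of $x$ I may choose a sequence $t_k \to +\infty$ with $g(-t_k) \to x$. For $\la$ close to $\la_0$ there is an analogous map $g_\la: \{\re \zeta < -K\} \to T_\la$ satisfying $f_\la \circ g_\la(\zeta) = v_\la + e^\zeta$, depending holomorphically on $\la$ (tracts deform holomorphically in a natural family of finite type maps). A periodic point $z^* = g_\la(\zeta^*) \in T_\la$ of period $n+1$ for $f_\la$ is then exactly a fixed point of
\[
\Psi_\la(\zeta) := g_\la^{-1}\bigl(f_\la^n(v_\la + e^\zeta)\bigr),
\]
and the chain rule identifies the multiplier of the cycle with $\Psi_\la'(\zeta^*)$.

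\textbf{Finding the fixed point.} To make $\Psi_{\la_k}$ nearly fix $-t_k$, I would use the non-persistence of the virtual cycle: Lemma~\ref{lem:qr} implies that $\la \mapsto \psi_\la^{-1}(f_\la^n(v_\la))$ is an open map near $\la_0$, so its image covers a neighborhood of $x$ in $X$ and in particular hits $g(-t_k)$ along some $\la_k \to \la_0$. For such $\la_k$, Lemma~\ref{lem:lasse} converts the Euclidean perturbation $|v_{\la_k} + e^{-t_k} - v_{\la_k}| = e^{-t_k}$ into a hyperbolic displacement of order $e^{-t_k}/|g_{\la_k}'(-t_k)|$, which tends to $0$ by Lemma~\ref{lem:nocusp}. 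The same Lemma applied to the derivative
\[
\Psi_\la'(\zeta) = \frac{(f_\la^n)'(v_\la + e^\zeta)\cdot e^\zeta}{g_\la'(\Psi_\la(\zeta))}
\]
shows $|\Psi_{\la_k}'| \leq e^{-t_k/2}$ uniformly on a disk $\{|\zeta + t_k| < 1\} \subset \H$. Banach's fixed point theorem then yields a unique fixed point $\zeta_k^*$ with $\re \zeta_k^* \to -\infty$, hence a periodic point $z_k := g_{\la_k}(\zeta_k^*)$ of period dividing $n+1$ whose multiplier $\rho_k = \Psi_{\la_k}'(\zeta_k^*)$ tends to $0$. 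Since $\rho_k \to 0$ while any persistent attracting cycle of shorter period would contribute a bounded multiplier, the period of $z_k$ must be exactly $n+1$ for large $k$. Finally, $v_{\la_k}$ sits within Euclidean distance $e^{-t_k}$ of the cycle point $f_{\la_k}(z_k) = v_{\la_k} + e^{\zeta_k^*}$, so it lies in the immediate basin and is captured.

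\textbf{Main obstacle.} The most delicate point will be ensuring that the contraction and near-fixed-point estimates hold uniformly in $k$, even though both the center $-t_k$ and the map $\Psi_{\la_k}$ drift; one must show that the super-exponential decay of $g_\la'$ from Lemma~\ref{lem:nocusp} dominates the exponential factor $e^\zeta$ on a disk of fixed Euclidean size, and that the uniformization $g_\la$ can be chosen holomorphically so that the comparison of $f_\la^n$ at $\la_k$ and at $\la_0$ is accurate enough to close the estimates. A secondary technicality is the case where the finite orbit $v, f(v), \ldots, f^{n-1}(v)$ contains a critical point: then $(f_{\la_0}^n)'(v_{\la_0})$ vanishes and the smallness of $\rho_k$ is even more pronounced, but the qualitative conclusion remains unchanged.
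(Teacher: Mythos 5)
There is a fundamental mismatch here: your proposal does not prove Lemma \ref{lem:nocusp} at all. The statement to be proved is a purely function-theoretic distortion estimate about a single univalent map $g:\H\to T$ — namely that $|g'(-t)|$ cannot decay faster than every exponential as $t\to+\infty$ — and it involves no dynamics, no family $\{f_\la\}$, no asymptotic value, and no fixed-point argument. What you have written is instead a strategy for producing attracting cycles near a virtual cycle (essentially the content of Lemma \ref{lem:thbcore} and Theorem \ref{prop:thb}), and in that strategy you explicitly \emph{invoke} Lemma \ref{lem:nocusp} ("the super-exponential decay of the Riemann map derivative provided by Lemma~\ref{lem:nocusp} will ensure\dots"). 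As a proof of the lemma itself this is circular: you are assuming the statement as an input to an argument for a different result. (In the paper the lemma is simply quoted from \cite[Lemma 4.2]{ABF}, but that does not change the assessment: nothing in your text establishes the limit \eqref{eq:nocusp}.)

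For the record, the actual proof is short and elementary. Precompose $g$ with a M\"obius map $\phi:\D\to\H$, say $\phi(w)=\frac{w-1}{w+1}$, so that $h:=g\circ\phi$ is univalent on $\D$ and the point $-t$ corresponds to $w_t=\frac{1-t}{1+t}\in(-1,0)$ with $1-|w_t|=\frac{2}{1+t}$. The Koebe distortion theorem gives a lower bound $|h'(w_t)|\geq |h'(0)|\,\frac{1-|w_t|}{(1+|w_t|)^3}\geq c\,(1-|w_t|)$, while $|\phi'(w_t)|=\frac{2}{|w_t+1|^2}=\frac{(1+t)^2}{2}$. Hence
\begin{equation*}
|g'(-t)|=\frac{|h'(w_t)|}{|\phi'(w_t)|}\;\geq\; \frac{C}{(1+t)^{3}}
\end{equation*}
for some $C>0$, i.e. the decay of $|g'(-t)|$ is at most polynomial in $t$, which immediately yields $|g'(-t)|\,e^{\alpha t}\to\infty$ for every $\alpha>0$. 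Any correct write-up must be of this nature (a univalent-function distortion estimate); the dynamical machinery in your proposal is irrelevant to this particular statement.
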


\begin{lem}[Distortion of small disks { \cite[Lemma 2.9]{ABF}}]\label{lem:distortion}
	Let $\{\varphi_\lam\}_{\lam \in \D}$ be a holomorphic motion of $X$, with $\varphi_0 = \id$.
	Let $t \mapsto \lam(t)$ be a continuous path in $\D$ with $\lim_{t \to +\infty} \lam(t)=0$, and $t \mapsto r_t$ a continuous function with $r_t >0$ and 
	$\lim_{t \to +\infty} r_t=0$. Let $t \mapsto z_t$ be a path in $X$ and $D_t:=\D(z_t,r_t)$. Let $\epsilon>0$;
	then for all $t$ large enough:
	\begin{equation*}
		\D(\varphi_{\lam(t)}(z_t), r_t^{1+\epsilon}) \subset \varphi_{\lam(t)}(\D(z_t,r_t)) \subset \D(\varphi_{\lam(t)}(z_t), r_t^{1-\epsilon})
	\end{equation*}
\end{lem}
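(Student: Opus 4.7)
The plan rests on two classical ingredients: the quasiconformality of holomorphic motions (Ma\~n\'e--Sad--Sullivan's $\lam$-lemma, strengthened if needed via Slodkowski's extension theorem), and Mori-type Hölder distortion estimates for quasiconformal maps. The heuristic is that both $K(\lam(t)) \to 1$ and $r_t \to 0$: although $r_t^{1/K}$ and $r_t^K$ deviate from $r_t$ more and more as $r_t$ shrinks, the dilatation tends to $1$ at a rate \emph{independent} of $r_t$, so the two effects decouple and the first one wins.

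First, by the $\lam$-lemma, each $\varphi_\lam$ is a $K(\lam)$-quasiconformal self-homeomorphism of $X$ with
$$K(\lam) \;\leq\; \frac{1+|\lam|}{1-|\lam|},$$
so $K(\lam(t)) \to 1$ as $t \to +\infty$. Mori's theorem, transferred to the compact Riemann surface $X$ via a finite atlas, supplies a constant $C$ (bounded as $K \to 1$) such that for every $K$-quasiconformal self-homeomorphism $\varphi$ of $X$ and all $z,w \in X$ with $d_X(z,w)$ small,
$$C^{-1}\, d_X(z,w)^K \;\leq\; d_X(\varphi(z),\varphi(w)) \;\leq\; C\, d_X(z,w)^{1/K};$$
the lower estimate follows by applying the upper estimate to the (also $K$-qc) inverse $\varphi^{-1}$.

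For the upper inclusion, applied to $\varphi = \varphi_{\lam(t)}$ and any $z \in \D(z_t,r_t)$, the upper Hölder bound gives
$$d_X(\varphi_{\lam(t)}(z), \varphi_{\lam(t)}(z_t)) \;\leq\; C\, r_t^{1/K(\lam(t))}.$$
For $t$ large, $K(\lam(t))$ is close enough to $1$ that $1/K(\lam(t)) \geq 1 - \epsilon/2$, whence the right-hand side is at most $C\, r_t^{1-\epsilon/2} = C\, r_t^{\epsilon/2}\cdot r_t^{1-\epsilon}$, which is itself at most $r_t^{1-\epsilon}$ once $t$ is large enough (since $C$ is bounded and $r_t^{\epsilon/2} \to 0$). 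The lower inclusion is obtained symmetrically from the lower Hölder bound, combined with the standard topological observation that a homeomorphism mapping $\partial \D(z_t,r_t)$ entirely outside a small metric ball around $\varphi_{\lam(t)}(z_t)$ must send $\D(z_t,r_t)$ onto a set containing that ball.

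The only delicate point is the uniformity of the Mori constant $C$ in $K$ (for $K$ in a neighbourhood of $1$) and across the compact surface $X$; both are routine consequences of compactness combined with the classical proof of Mori's theorem, so no genuine obstacle is anticipated.
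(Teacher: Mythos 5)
Your overall strategy is the right one, and it is the same scheme as in the proof this paper cites (the lemma is quoted from \cite[Lemma 2.9]{ABF} and not reproved here): the maps of a holomorphic motion with basepoint the identity are $K(\lam)$-quasiconformal with $K(\lam)\le\frac{1+|\lam|}{1-|\lam|}\to 1$, quasiconformal maps satisfy two-sided H\"older estimates with exponents $K^{\pm 1}$, and since the dilatation tends to $1$ at a rate independent of $r_t$, the gap between $r_t$ and $r_t^{1\pm\epsilon}$ absorbs the multiplicative constants. Your arithmetic for both inclusions is correct, and the topological argument for the lower inclusion (the image of $\partial\D(z_t,r_t)$ stays outside the small ball, which contains the image of the center, hence the ball lies in the image of the disk) is the standard and valid way to conclude.

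There is, however, one genuine gap in the way you justify the key estimate. You invoke a Mori-type bound ``for every $K$-quasiconformal self-homeomorphism $\varphi$ of $X$'' with a constant $C$ depending only on $K$ (and $X$), to be obtained ``by compactness''. This is false precisely in the main case $X=\widehat{\C}$: M\"obius transformations are $1$-quasiconformal and yet distort small spherical disks by arbitrarily large factors (e.g. $z\mapsto Rz$ with $R$ huge), so no uniform $C$ exists for the full class, and compactness of $X$ does not help --- the obstruction is the noncompactness of $\mathrm{Aut}(\widehat{\C})$. Mori/H\"older estimates with constants depending only on $K$ require a normalization (e.g. three uniformly separated points moved by a bounded amount, or fixed). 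What rescues the argument is information you already have but do not use at this step: since $\{\varphi_\lam\}$ is a holomorphic motion with $\varphi_0=\id$ and $\lam(t)\to 0$, the $\lam$-lemma gives $\varphi_{\lam(t)}\to\id$ uniformly on the compact surface $X$, so for $t$ large the maps $\varphi_{\lam(t)}$ are normalized in the required sense, and the two-sided H\"older bound with a constant bounded as $K\to 1$ does hold for this family. With that correction (quantify the estimate over suitably normalized $K$-quasiconformal maps, and check the normalization for $\varphi_{\lam(t)}$), your proof goes through.
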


\begin{lem}\label{lem:diamU}
	Let $h_\lam : \D \to \C$ be a holomorphic family of holomorphic maps, with $\lam \in M$ a domain of $\C^m$ containing $0$  and  assume that $h_\lam(0) \equiv 0$ on $M$.
	Let $\lam_k \to 0$ in $M$ and let $\eps_k \to 0$, $\eps_k>0$. Let $U_k$ denote the connected component of $h_{\lam_k}^{-1}(\D(0,\eps_k))$ containing $0$.
	 There exists $c>0$ such that for all $k \in \N$ large enough,
	$$\D(0,c \eps_k) \subset U_k.$$ 
\end{lem}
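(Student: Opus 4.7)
The plan is to exploit the identity $h_\lambda(0) \equiv 0$ to factor out $z$ and obtain a uniform Lipschitz-type bound on $h_{\lambda_k}$ near $0$. The result then reduces to a one-line computation. I expect no real obstacle; the only thing to check is that the factored-out function is jointly holomorphic, which uses the persistent vanishing of $h_\lambda$ at $0$.

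First, I would define $g(\lambda, z) := h_\lambda(z)/z$ on $M \times (\D \setminus \{0\})$. Since $h_\lambda(0) = 0$ for every $\lambda \in M$, this function extends to a jointly holomorphic function on all of $M \times \D$: for each fixed $\lambda$, the singularity at $z=0$ is removable by the Riemann removable singularity theorem, and joint holomorphicity follows either from Hartogs' theorem or directly from the Cauchy integral representation
\[
g(\lambda, z) = \frac{1}{2\pi i} \oint_{|w|=r_0} \frac{h_\lambda(w)}{w(w-z)}\, dw
\]
valid for $|z| < r_0 < 1$. Thus $h_\lambda(z) = z \cdot g_\lambda(z)$ with $g$ holomorphic on $M \times \D$.

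Next, by continuity of $g$ on $M \times \D$ there exist $r \in (0,1)$, a neighborhood $V \subset M$ of $0$, and a constant $C > 0$ such that $|g_\lambda(z)| \leq C$ for all $(\lambda, z) \in V \times \overline{\D(0, r)}$. For $k$ large enough, $\lambda_k \in V$ and $\eps_k / C < r$; then for every $z \in \D(0, \eps_k/C)$,
\[
|h_{\lambda_k}(z)| = |z| \cdot |g_{\lambda_k}(z)| < \frac{\eps_k}{C} \cdot C = \eps_k.
\]
Hence $\D(0, \eps_k/C) \subset h_{\lambda_k}^{-1}(\D(0, \eps_k))$, and being connected and containing $0$, this disk is contained in the component $U_k$. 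Taking $c := 1/C$ then gives the claim.

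The one point that might superficially seem worrisome is a degenerate behaviour of $h_0$ at $0$ (for instance, $h_0$ vanishing to high order, or even $h_0 \equiv 0$), which could make $g_0(0)$ vanish. However, the argument only requires an upper bound on $|g_\lambda|$, not a lower bound, so such degeneracy is harmless---in fact it only makes the inclusion stronger. Thus the proof works uniformly without any case distinction.
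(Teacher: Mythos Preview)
Your proof is correct and follows essentially the same approach as the paper: both arguments reduce to the Lipschitz-type bound $|h_\lambda(z)| \leq C|z|$ on a neighborhood of $(0,0)$, from which the inclusion $\D(0,\eps_k/C) \subset U_k$ is immediate. The paper simply asserts this bound as a direct consequence of $h_\lambda(0)\equiv 0$, whereas you make the factorization $h_\lambda(z) = z\, g_\lambda(z)$ explicit; this extra detail is helpful but does not change the underlying idea.
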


\begin{proof}
	Since $h_\lam(0)\equiv 0$, there exists a constant $C>0$ such that for all $(\lam,z)$ in a neighborhood of $(0,0) \in M \times \D$, we have
	$$|h_\lam(z)| \leq C |z|.$$
	This means that for all $k$ large enough, $(\lam_k,\eps_k)$ belongs to that neighborhood and
	$$h_{\lam_k}(\D(0,c \eps_k)) \subset \D(0,\eps_k) $$
	where $c:=\frac{1}{C}$. In particular, $\D(0,c \eps_k) \subset U_k$.
\end{proof}

The proof of Theorem \ref{prop:thb} will follow from the next lemma, which we will also need later on.

\begin{lem}[ Finding attracting cycles]\label{lem:thbcore}
	Let $\lam_0 \in M$, and let $v_{\lam_0}$ be an asymptotic value.
	Let $T$ be a tract above $v_{\lam_0}$, and let $\Phi: T \to \H$ be a Riemann uniformization of $T$ onto the left half-plane. Assume that there exists $\lam_k \to \lam_0$ and $t_k \to +\infty$ such that 
	$f_{\lam_k}^{{ n}}(v_{\lam_k}) = \psi_{\lam_k} \circ \Phi^{-1}(-t_k)$.
	Then for all $k$ large enough, $f_{\lam_k}$  has an attracting cycle of period $n+1$, of multiplier $\rho_k \to 0$, which captures 
	the asymptotic value $v_{\lam_k}$.
\end{lem}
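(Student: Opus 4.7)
The plan is to exhibit $G_k := f_{\lam_k}^{n+1}$ as a holomorphic self-map of a small disk $D(v_{\lam_k}, r_k)$ whose image lies in a much smaller disk; the Schwarz lemma will then produce an attracting fixed point with multiplier tending to $0$.

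Set $w_k := \Phi^{-1}(-t_k) \in T$, so that $\zeta_k := \psi_{\lam_k}(w_k) = f_{\lam_k}^n(v_{\lam_k}) \in T_{\lam_k} := \psi_{\lam_k}(T)$. Since $F := f \circ \Phi^{-1} : \HP \to D \setminus \{v_{\lam_0}\}$ is a universal cover of a punctured disk, it is conjugate to an exponential, giving $|F(-t_k) - v_{\lam_0}| \asymp e^{-t_k}$. Combined with Lemma~\ref{lem:nocusp} and Koebe's theorem, $\dist(w_k, \partial T) \ge e^{-\al t_k}$ for every $\al > 0$ and $k$ large, and the derivative of $f$ on a small Euclidean disk around $w_k$ is of order $e^{-(1-o(1))t_k}$. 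Since the dilatation bounds of $\psi_{\lam_k}, \varphi_{\lam_k}$ tend to $1$ as $\lam_k \to \lam_0$, Hölder continuity of these qc maps yields, for every $\al > 0$ and $k$ large,
\[
\dist(\zeta_k, \partial T_{\lam_k}) \ge e^{-\al t_k}, \qquad |f_{\lam_k}(\zeta_k) - v_{\lam_k}| \le e^{-(1-o(1))t_k}.
\]

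Pick $r_k := e^{-t_k/2}$. The first $n$ orbit points $f_{\lam_k}^j(v_{\lam_k})$ converge to the $f_{\lam_0}$-orbit, which is compactly contained in $W$ and on which $f_{\lam_0}$ has bounded derivative; Koebe distortion along these first $n$ iterates then gives $f_{\lam_k}^n(D(v_{\lam_k}, r_k)) \subset D(\zeta_k, Cr_k)$ with $C$ uniform in $k$, and by the first displayed estimate this disk lies inside $T_{\lam_k}$. A final application of $f_{\lam_k}$ contracts diameters by a factor $\le e^{-(1-o(1))t_k}$: this uses Lemma~\ref{lem:distortion} together with the fact that $f_{\lam_k}$ in a neighbourhood of $\zeta_k$ is the qc conjugate of the exponential $F$ via $\psi_{\lam_k}, \varphi_{\lam_k}$ whose dilatations are close to $1$. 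Therefore
\[
G_k(D(v_{\lam_k}, r_k)) \subset D(y_k, C' r_k e^{-(1-o(1))t_k}), \quad y_k := f_{\lam_k}(\zeta_k), \quad |y_k - v_{\lam_k}| = o(r_k).
\]

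For $k$ large this image sits inside $D(v_{\lam_k}, r_k)$ and has diameter $o(r_k)$; normalising to $\D$ and applying the Schwarz lemma, $G_k$ has a unique attracting fixed point $p_k$ in the disk, with multiplier $\rho_k \to 0$. The orbit $(f_{\lam_k}^j(p_k))_{j=0}^{n}$ consists of $n+1$ distinct points for $k$ large: if two coincided, the analogous iterates of $v_{\lam_0}$ would also coincide, contradicting the fact that $f_{\lam_0}^n(v_{\lam_0}) \in \partial W$ while $f_{\lam_0}^j(v_{\lam_0}) \in W$ for $j < n$. Hence $p_k$ generates an attracting cycle of period exactly $n+1$ that captures $v_{\lam_k}$. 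The main obstacle is to absorb all the sub-exponential distortion factors (Hölder distortion from $\psi_{\lam_k}, \varphi_{\lam_k}$ and Koebe distortion near $\partial T_{\lam_k}$) into the dominant exponential $e^{-t_k}$ from the tract; this works precisely because the dilatations of $\psi_{\lam_k}, \varphi_{\lam_k}$ tend to $1$ as $\lam_k \to \lam_0$, so no fixed Hölder exponent is lost.
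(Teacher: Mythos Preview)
Your argument follows essentially the same route as the paper's: both produce a domain near $v_{\lam_k}$ mapped compactly into itself by $f_{\lam_k}^{n+1}$, using the exponential structure of the tract, Lemma~\ref{lem:nocusp} to control $|(\Phi^{-1})'(-t_k)|$, Lemma~\ref{lem:distortion} for the quasiconformal distortion of $\psi_{\lam_k},\varphi_{\lam_k}$, a uniform Lipschitz bound on $f_\lam^n$ near $v_{\lam_0}$, and Schwarz's lemma. The only organizational difference is that the paper works \emph{backwards}---defining $D_{t_k}:=\Phi_{\lam_k}^{-1}(\D(-t_k,\pi))$ in the tract and bounding from below the component $U_{t_k}$ of $f_{\lam_k}^{-n}(D_{t_k})$ containing $v_{\lam_k}$ via Lemma~\ref{lem:diamU}---whereas you work \emph{forwards}, fixing $r_k=e^{-t_k/2}$ and pushing $D(v_{\lam_k},r_k)$ through $f_{\lam_k}^{n+1}$. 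The paper's choice avoids having to check explicitly that the forward image of the $r_k$-disk lands inside $T_{\lam_k}$, but both organizations rest on the same estimates.

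Two small points. Your phrase ``Koebe distortion along these first $n$ iterates'' is a misnomer: $f_{\lam_k}^n$ need not be univalent near $v_{\lam_k}$, but all you actually use is the elementary bound $|(f_\lam^n)'|\le C$ on a fixed neighborhood of $v_{\lam_0}$, which is precisely the content of Lemma~\ref{lem:diamU}. And your argument that the period is exactly $n+1$ invokes $f_{\lam_0}^n(v_{\lam_0})\in\partial W$, which is not part of the lemma's hypotheses (the sequence $\Phi^{-1}(-t_k)$ may accumulate on $\partial T\cap W$ or fail to converge); even granting it, you only separate the $n$-th iterate from the earlier ones, not the earlier iterates from one another. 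That said, the paper's proof does not address the exact period either.
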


\begin{proof}[Proof of Lemma \ref{lem:thbcore}]
	To simplify the notations, set  $f:=f_{\lambda_0}$ and
	$v:=v_{\la_0}$. Recall that $f_\lam=\varphi_\lam \circ f \circ \psi_\lam^{-1}$. Let $V:=\D^*(v, r)$ be a punctured disk centered at $v$ disjoint from $S(f)$, so that
	$f: T\to V$ is a universal cover. In particular, $f(z)=v+r e^{\Phi(z)}$ for all $z \in T$. 
	Let $V_\lam:=\varphi_\lam(V)$ and $T_\lam:=\psi_\lam(T)$, so that 
	$f_\lam:T_\lam\ra V_\lam$ is also  an infinite degree universal cover,
	and let $\Phi_\lam:=\Phi\circ \psi_\lam^{-1} : T_\lam \to \H$.
	Then $\varphi_\lam^{-1} \circ f_\lam : T_\lam \to V$ is a universal cover, and so
	for all $z \in T_\lam$, 
	\begin{equation}\label{eq:expf}
		f_\lam(z)=\varphi_\lam\left(v+ r e^{\Phi_\lam(z)}\right)
	\end{equation}
	By assumption, $f_{\lam_k}^{n}(v_{\lam_k}) = \psi_{\lam_k} \circ \Phi^{-1}(-t_k)$.
	
	Now let $D_{t_k}:=\Phi_{\lam_k}^{-1}(\D(-t_k,\pi))\subset T_{\lam_k}$ and let $U_{t_k}$ denote the connected component of 
	$f_{\lam_k}^{-n}(D_{t_k})$ containing $v_{\lam_k}$. We will prove that for all $k$ large enough, 	$f_{\lam_k}(D_{t_k}) \Subset U_{t_k}$, or equivalently, 
	$f_{\lam_k}^{n+1}(U_{t_k}) \Subset U_{t_k}$; this implies the existence of an attracting fixed point for $f_{\lam_k}^{n+1}$. 
	

	First, let us show that for $r$ small and $k$ large, $f_{\lam_k}(D_{t_k})$ is contained in a small disk centered at $v_{\lam_k}$, or more precisely,   
	\begin{equation}\label{eq:controldt}
		f_{\lam_k}(D_{t_k}) \subset \D\left(v_{\lam_k}, e^{-t_k(1-\epsilon)})\right).
	\end{equation}
	By (\ref{eq:expf}) we have that for all $z \in \H$,
	\[
	f_\lam \circ  \Phi_\lam^{-1}(z)= \varphi_\lam(v +r e^z).
	\]
	Since $\D(-t_k,\pi)\subset\{z\in\C: \Re z< -t_k+\pi\}$ we have that 
	\[
	f_{\lam_k}(D_{t_k}) \subset \varphi_{\lam_k}(\D(v, r e^{-t_k+\pi}))
	\]
	Let $\epsilon>0$. By Lemma \ref{lem:distortion}, we have for all $k$ large enough:
	\begin{equation}
		f_{\lam_k}(D_{t_k}) \subset \D\left(v_{\lam_k}, (r e^\pi)^{1-\epsilon}e^{-t_k(1-\epsilon)})\right),
	\end{equation}
	which for $r$ small implies (\ref{eq:controldt}).
	
	Now we show that $U_{t_k}$ contains a disk centered at $v_{\lam_k}$ whose radius, for $t_k$  large,  is much larger than $e^{-t_k(1-\epsilon)}$.
	
	
	Let us first estimate $\dist(f_{\lam_k}^{n}(v_{\lam_k}), \partial D_{t_k})$. 
	To lighten the notations, let $g:=\Phi^{-1}$; then $g$ is univalent on $\H$
	and $D_{t_k}=\psi_{\lam_k} \circ g(\D(-t_k,\pi))$. By Koebe's theorem, 
	$g(\D(-t_k,\pi))$ contains a disk 
	\[
	\D(g(-t_k),C |g'(-t_k)|).
	\]
	Then, by Lemma \ref{lem:distortion}, 
	\begin{equation}\label{eq:indiamD}
	\begin{split}
			D_{t_k}& =\psi_{\lam_k}\circ g(\D(-t_k,\pi))\supset	 \D(\psi_{\lam_k} \circ g(-t_k),  C^{1+\epsilon}|g'(-t_k)|^{1+\epsilon})\\
		& =	\D( f_{\lam_k}^{n}(v_{\lam_k}), C^{ 1+\epsilon}|g'(-t_k)|^{1+\epsilon}).
	\end{split}
	\end{equation}


	

	 Recall that $U_{t_k}$ denotes the connected component of $f_{\lam_k}^{-n}(D_{t_k})$ containing 
	$v_{\lam_k}$, and let \linebreak $\eps_k:= C^{ 1+\epsilon}|g'(-t_k)|^{1+\epsilon}$.
	Applying Lemma \ref{lem:diamU} to $h_\lam(z):=f_\lam^n(z+v_\lam)-f_\lam^n(v_\lam)$ and using \eqref{eq:indiamD},
	we obtain the existence of $c>0$ such that 
	\begin{equation}\label{eq:controlut}
		\D(v_{\lam_k}, c \eps_k) \subset U_{t_k}.
	\end{equation}

	Finally, from equations \eqref{eq:controldt} and \eqref{eq:controlut}, 
	it is enough to prove that 
	\begin{equation}\label{eq:ratio of radii}
		\frac{e^{-t_k(1-\epsilon)}}{ |g'(-t_k)|^{1+\epsilon}} \to 0 \quad \text{as $t \to +\infty$}, 
	\end{equation}
	which follows from Lemma~\ref{lem:nocusp}. 
	This proves that $f_{\lam_k}^{ n+1}(U_{t_k}) \Subset U_{t_k}$, and the result then follows from Schwartz's lemma. Note that (\ref{eq:ratio of radii}) also implies that  the multiplier   goes to zero
	as $k \to +\infty$,  since the modulus of $U_{t_k}\setminus \overline{f_{\la_k}^{n+1}(U_{t_k})}$ tends to infinity.
\end{proof}

We now finish the proof of Theorem \ref{prop:thb}.

\begin{proof}[Proof of Theorem \ref{prop:thb}]
	By assumption, there exists $t_k \to +\infty$ such that 
	$\Phi^{-1}(-t_k) \to x \in \partial W_{\la_0}$, while $f^n(v)=x$.
		Now, we wish to find a sequence $(\lam_k)_{k \in \N}$ in parameter space such that
		\begin{equation} \label{eq2}
				\Phi_{\lam_k} \circ f_{\lam_k}^{n}(v_{\lam_k}) = -t_k.
			\end{equation}
		
		Let $G(\lam):=\psi_\lam^{-1} \circ f_\lam^{n}(v_\lam)$  and recall that $G(\la_0)=f^n(v)=x$. Given the definition of $\Phi_\lam$, (\ref{eq2}) is equivalent to 
		\begin{equation}\label{eq:curve}
				\Phi\circ \psi_{\lam_k}^{-1} \circ f_{\lam_k}^{n} (v_{\la_k}) = -t_k,
			\end{equation}
		or
		\begin{equation}
				G(\lam_k) = \Phi^{-1}(-t_k).
			\end{equation}
		Since $G$ is a branched cover over a neighborhood of $x$ by Lemma \ref{lem:qr}, there is such a sequence 
		(not necessarily unique if the local degree of $G$ at $\lam_0$ is more than 1).
	We finally apply Lemma \ref{lem:thbcore} to conclude.	
\end{proof}

\subsection{Creation of attracting cycles at active parameters}

We conclude this section with the construction of attracting cycles of high period near parameters with active singular values.
We begin with the easier case of critical values.

\begin{prop}[Super-attracting cycles are dense in the activity locus of critical values]\label{lem:center}
	Let $f_{\la_0}$ be a non exceptional finite type map.
	Let $v_\lam$ be a critical value, and assume that it is active at $\lam_0$.
	Assume as well that there exists a critical point $c(\lam_0) \notin S(f_{\lam_0})$ such that $f_{\lam_0}(c_{\lam_0}) = v_{\lam_0}$. 
	Then there exists $n_k \to +\infty$ and $\lam_k \to \lam_0$ such that $v_{\lam_k}$ is in a super-attracting cycle of period $n_k$.
\end{prop}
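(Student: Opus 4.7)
The plan is to mirror the proof of Proposition \ref{prop:misiu} on density of Misiurewicz parameters, replacing the repelling periodic point $x_\la$ with the critical point $c(\la) := \psi_\la(c(\la_0))$, which by naturality moves holomorphically, remains a critical point of $f_\la$ of the same multiplicity, and satisfies $f_\la(c(\la)) = v_\la$. Given $\eps>0$ and $N \in \N$, I would first apply Proposition \ref{prop:trunc} to find $\la_1 \in \B(\la_0, \eps/2)$ and $n \geq N$ with $f_{\la_1}^n(v_{\la_1}) \in \partial W_{\la_1}$ non-persistently, then invoke the Shooting Lemma (Proposition \ref{lem:shooting}) with $\gamma(\la) := c(\la)$ to produce $\la^* \in \B(\la_1, \eps/2)$ satisfying $f_{\la^*}^{n+2}(v_{\la^*}) = c(\la^*)$. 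Since $c(\la^*)$ is a critical point of $f_{\la^*}$, this equality exhibits it as a super-attracting periodic point whose cycle contains $v_{\la^*}$ and has period dividing $n+3$.

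To apply the Shooting Lemma at $\la_1$ we must know that $c(\la_1)$ is not a Picard exceptional value of $f_{\la_1}$. Since Picard exceptional values are always asymptotic, they belong to the finite set $S(f_{\la_0})$; hence the set $E \subset M$ on which $c(\la)$ becomes Picard exceptional is the finite union, over Picard exceptional values $p$ of $f_{\la_0}$, of the analytic loci $\{\la \in M : \psi_\la(c(\la_0)) = \varphi_\la(p) \}$. Each such locus is proper because the defining equation fails at $\la_0$, where $c(\la_0) \notin S(f_{\la_0})$ while $p \in S(f_{\la_0})$. Thus $E$ avoids a neighborhood of $\la_0$, and choosing $\eps$ small enough forces $\la_1 \notin E$.

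The main obstacle is ensuring that the super-attracting periods tend to infinity rather than merely divide $n+3$. I would first observe that $c(\la_0)$ cannot be periodic for $f_{\la_0}$: otherwise it would lie on a super-attracting cycle, which by the Implicit Function Theorem applied to $f_\la^p(z) = z$ near the cycle (the multiplier being $0 \neq 1$) would perturb to an attracting cycle of $f_\la$ for all $\la$ near $\la_0$, whose basin would contain the holomorphically varying point $c(\la)$; this would force $\{\la \mapsto f_\la^n(v_\la)\}_{n \in \N}$ to take values in a fixed compact subset of $X$ and hence to be normal near $\la_0$, contradicting activity. Consequently, for every $P \in \N$ the set $Z_{\leq P} := \bigcup_{p=1}^P \{\la : f_\la^p(c(\la)) = c(\la) \}$ is a finite union of proper analytic subsets none of which contains $\la_0$, and therefore avoids a ball $\B(\la_0, \eps_P)$ for some $\eps_P > 0$. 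Choosing $\eps_k \to 0$ with $\eps_k < \eps_{P_k}$ for a sequence $P_k \to \infty$, the parameter $\la^*_k$ produced by the two-step construction above lies in $\B(\la_0, \eps_k) \setminus Z_{\leq P_k}$, so the minimal period of its super-attracting cycle exceeds $P_k$, yielding the required sequence $\la_k \to \la_0$ with super-attracting periods $n_k \to \infty$.
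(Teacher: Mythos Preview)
Your approach is exactly the paper's: apply Proposition~\ref{prop:trunc} to obtain a truncated parameter, then the Shooting Lemma with target $\gamma(\la)=c(\la)$ to land the orbit of $v_\la$ on the critical point. The paper's proof is two lines and simply asserts the resulting period is $n_k+1$, without checking that $c(\la_1)$ is non-Picard-exceptional or that the minimal period is not a small divisor; you are right to supply both verifications, and your treatment of the first point is correct (indeed simpler than you make it: since $S(f)$ is finite and $c(\la_0)\notin S(f_{\la_0})$, continuity alone gives $c(\la)\notin S(f_\la)$ for $\la$ near $\la_0$).

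One small gap in your period argument: you assert that each $\{\la : f_\la^p(c(\la))=c(\la)\}$ is a proper analytic subset near $\la_0$, but this presupposes that $f_\la^p(c(\la))=f_\la^{p-1}(v_\la)$ is \emph{defined} for $\la$ near $\la_0$. If we are in case~(1) of Proposition~\ref{prop:activity}, i.e.\ $f_{\la_0}^m(v_{\la_0})\in\partial W_{\la_0}$ for some $m$, then $f_\la^{p-1}(v_\la)$ is undefined at $\la_0$ for $p>m+1$, and $Z_{\le P}$ is not obviously closed near $\la_0$ for large $P$. The fix is easy: restrict to a one-dimensional disk $D\ni\la_0$ on which the activity persists (this is always possible). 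On $D$, the equation $f_\la^p(c(\la))=c(\la)$, when satisfied non-persistently, cuts out a discrete set; since $c(\la_0)$ is not periodic (your argument for this is correct), the locus $Z_{\le P}\cap D$ is discrete and avoids $\la_0$, hence avoids a disk $\D(\la_0,\eps_P)$. Running your two-step construction inside $D$ then gives the conclusion. Alternatively, argue by contradiction: if the minimal periods stayed bounded by $P$ along a subsequence, pass to a further subsequence with fixed period $p$; the persistence of the super-attracting cycle (multiplier $0\ne1$, so IFT applies) together with $\la_k\to\la_0$ forces $v_\la$ into an attracting basin on an open set accumulating at $\la_0$, contradicting activity.
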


\begin{proof}
	By Proposition \ref{prop:trunc}, there exists $n_k \to +\infty$ and $\lam_k \to \lam_0$
	such that $f_{\lam_k}^{n_k}(v_{\lam_k}) \in \partial W_{\lam_k}$. 
	We then apply Proposition \ref{lem:shooting} to find $\lam_k'$ arbitrarily close to 
	$\lam_k$ such that $f_{\lam_k'}^{n_k+1}(v_{\lam_k'}) = c_{\lam_k'}$; in other words,
	$v_{\la_k'}$ is a super-attracting periodic point of period $n_{k}+1$.

\end{proof}

\begin{prop}[Attracting cycles are dense in the activity locus of asymptotic values]  \label{lem:vc}
 Let $f_{\lambda_0}$ be a non exceptional finite type map. 
	Let $v_\lam$ be an  asymptotic value, and assume that it is active at $\lam_0$.
	Then  there exists $n_k \to +\infty$ and $\lam_k \to \lam_0$ such that $f_{\lam_k}$
	has an attracting cycle of period $n_k$.
\end{prop}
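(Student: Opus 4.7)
The plan is to reduce this proposition to Theorem \ref{prop:thb} by constructing, for each large $N$, a parameter arbitrarily close to $\la_0$ at which $f_\la$ admits a non-persistent simple virtual cycle of length approximately $N$. Theorem \ref{prop:thb} will then produce an attracting cycle of that period nearby, and a diagonal extraction yields a sequence $\la_k \to \la_0$ with periods $n_k \to +\infty$.

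Since $f := f_{\la_0}$ is a finite type map, the asymptotic value $v := v_{\la_0}$ is isolated in $S(f)$ and thus admits a logarithmic tract $T \subset W$ with conformal uniformization $\Phi : T \to \H$. The curves $t \mapsto \Phi^{-1}(-t+iy_0)$ ($t \to +\infty$) cannot accumulate in $W$ (being preimages of the puncture over $v$ in the universal cover) and therefore accumulate in $\partial W$; I pick one good point $x_0 \in \partial T \cap \partial W$, chosen generically so that $\psi_\mu(x_0) \notin S(f_\mu)$ (hence not Picard exceptional) for each $\mu$ in the countable set $\{\mu_j\}$ constructed below. By Proposition \ref{prop:trunc}, applied to the active $v_\la$ at the non-exceptional $\la_0$, there exist $\mu_j \to \la_0$ and $N_j \to +\infty$ such that $f_{\mu_j}^{N_j}(v_{\mu_j}) \in \partial W_{\mu_j}$ non-persistently. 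I then apply the shooting lemma (Proposition \ref{lem:shooting}) at each $\mu_j$ with holomorphic target $\gamma(\la) := \psi_\la(x_0)$, obtaining parameters $\mu_j'$ arbitrarily close to $\mu_j$ satisfying
\[
f_{\mu_j'}^{N_j+2}(v_{\mu_j'}) \;=\; \psi_{\mu_j'}(x_0) \;\in\; \partial W_{\mu_j'}.
\]

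At each $\mu_j'$, the domain $T_{\mu_j'} := \psi_{\mu_j'}(T)$ is a logarithmic tract of $f_{\mu_j'}$ above $v_{\mu_j'} = \varphi_{\mu_j'}(v)$, and $\psi_{\mu_j'}(x_0) \in \partial T_{\mu_j'} \cap \partial W_{\mu_j'}$. The key claim is that $\psi_{\mu_j'}(x_0)$ is a good point of $T_{\mu_j'}$: the hyperbolic geodesic $\alpha \subset T$ accumulating at $x_0$ is sent by the quasiconformal map $\psi_{\mu_j'}$ to a quasi-geodesic in $T_{\mu_j'}$ accumulating at $\psi_{\mu_j'}(x_0)$; by the classical fellow-traveling of quasi-geodesics in simply connected hyperbolic domains, this curve stays within bounded hyperbolic distance of an actual hyperbolic geodesic $\tilde\alpha \subset T_{\mu_j'}$ going to the tract's end, and Lemma \ref{lem:lasse} forces the Euclidean distance between them to tend to zero near $\partial T_{\mu_j'}$, so $\tilde\alpha$ also accumulates at $\psi_{\mu_j'}(x_0)$. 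Hence the orbit $v_{\mu_j'}, f_{\mu_j'}(v_{\mu_j'}),\dots, f_{\mu_j'}^{N_j+2}(v_{\mu_j'})$ is a simple virtual cycle of length $N_j+3$, which is non-persistent thanks to the generic choice of $x_0$ combined with the activity of $v_\la$ at $\la_0$: were the relation $f_\la^{N_j+2}(v_\la) \in \partial W_\la$ persistent on a neighborhood of $\mu_j'$, Lemma \ref{lem:Gct} would propagate it through $M$, contradicting case (1) of passivity at $\la_0$.

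Applying Theorem \ref{prop:thb} at each $\mu_j'$ then produces $\la_j$ within distance $1/j$ of $\mu_j'$ (and hence of $\la_0$) such that $f_{\la_j}$ has an attracting cycle of period $n_j := N_j+3$, so that $\la_j \to \la_0$ with $n_j \to +\infty$, as required. The main technical obstacle is the good-point transfer in the penultimate step: one must show that the accumulation locus of a hyperbolic geodesic with respect to a prescribed end is preserved in the appropriate sense under quasiconformal conjugation between simply connected hyperbolic domains, which relies on the classical stability of hyperbolic geodesics under quasi-isometric perturbations.
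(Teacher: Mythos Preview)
Your overall architecture --- use Proposition \ref{prop:trunc} to land the orbit of $v_\la$ on $\partial W_\la$, then the Shooting Lemma to redirect it, then manufacture an attracting cycle --- matches the paper's. The difference is the target of the shooting and which lemma closes the argument: you shoot to a good point $x_0 \in \partial T \cap \partial W$ and invoke Theorem \ref{prop:thb} (via a good-point transfer under $\psi_\la$), whereas the paper shoots to a point \emph{inside} the tract and invokes Lemma \ref{lem:thbcore} directly.

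Your route has one genuine gap. The phrase ``chosen generically so that $\psi_\mu(x_0) \notin S(f_\mu)$'' is not justified: good points of a fixed tract need not be plentiful. Whenever $\partial W$ is totally disconnected (in particular for all meromorphic maps and their iterates), the accumulation set of the geodesic $t \mapsto \Phi^{-1}(-t)$ is a connected subset of $\partial W$, hence a single point, and geodesics $\Phi^{-1}(-t+iy_0)$ with different $y_0$ accumulate at the same point (they are at bounded hyperbolic distance, and your own Lemma \ref{lem:lasse} argument then forces the same limit). So there may be exactly one good point $x_0$, with no freedom to avoid the finitely many Picard exceptional values. You would need to prove that the end of a tract is never Picard exceptional for a non-exceptional finite type map; this is not obvious and you do not address it. By contrast, your good-point transfer argument (Morse lemma plus Lemma \ref{lem:lasse}) is correct once fleshed out --- it is just unnecessary.

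The paper sidesteps both issues by choosing the shooting target $\gamma_\ell(\la) := \psi_\la \circ \Phi^{-1}(-t_\ell)$ for a sequence $t_\ell \to +\infty$. These targets lie in $W_\la$, and since $S(f)$ is finite they are non-singular (hence not Picard exceptional) for all large $\ell$. The Shooting Lemma then produces $\la_*$ with $f_{\la_*}^{n_k+2}(v_{\la_*}) = \psi_{\la_*} \circ \Phi^{-1}(-t_\ell)$, which is \emph{exactly} the hypothesis of Lemma \ref{lem:thbcore}; no boundary analysis, no virtual-cycle structure, and no quasi-geodesic transfer are needed. In short, Lemma \ref{lem:thbcore} was isolated precisely to make this step immediate, and going through Theorem \ref{prop:thb} instead reintroduces the difficulties that lemma was designed to avoid.
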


\begin{proof}

	Let $\eps>0$ and $N \in \N$: we will find $\lam_* \in \B(\lam_0,\eps)$ with an attracting cycle of period at least $N$.
	By Proposition \ref{prop:trunc}, there exists $n_k \to +\infty$ and $\lam_k \to \lam_0$
	such that $f_{\lam_k}^{n_k}(v_{\lam_k}) \in \partial W_{\lam_k}$ (non-persistently). Let $k \in \N$ such that $n_k \geq N$ and $d( \lam_k , \lam_0) < \frac{\eps}{2}$.
	Let $T$ be a tract above $v_{\lam_0}$, and let $ \Phi: T \to \H$ be a Riemann uniformization onto the left half-plane. Let $t_\ell \to -\infty$ be any sequence, and let $\gamma_\ell(\lam):=\psi_{\lam} \circ \Phi^{-1}(-t_\ell)$. For all $\ell$ large enough, 
	$\gamma_\ell(\lam_0) = \Phi^{-1}(-t_\ell) \notin S(f_{\lam_0})$. By Proposition \ref{lem:shooting}, there exists $ \lam_* \in \B( \lam_k, \frac{\eps}{2} )$ such that $f_{{ \lam_*}}^{{ n_k+2}}(v_{{ \lam_*}})=\gamma_\ell({ \lam_*}) =  \psi_{{ \lam_*}} \circ \Phi^{-1}(-t_\ell)$.
	By Lemma \ref{lem:thbcore}, for all $\ell$ large enough, $f_{\lam_2}$ has an attracting cycle of period $ n_k+3>N$, and we are done. 
	
\end{proof}

\section{ Characterization of $J$-stability: Proof of Theorem \ref{th:MSS}}

In view of Propositions~\ref{lem:exceptional} and \ref{prop:sparsexcep}, in the proof of Theorem~\ref{th:MSS} we can assume that  exceptional maps form a proper analytic subset in the natural family under consideration. Indeed,  affine endomorphisms of the complex torus and automorphisms have no singular values; and for rational maps, entire maps, and meromorphic maps, Theorem~\ref{th:MSS} has been proven in \cite{mss}, \cite{lyu1}, \cite{lyu2}, \cite{EL} and \cite{ABF} respectively.

The case of a natural family of finite type self-maps of $\C^*$ with essential singularities at $0$ and $\infty$ is not formally covered by the aforementionned articles. However, the proof given in \cite{EL} (stated only for finite entire maps) applies verbatim to finite type self-map of $\C^*$. We will therefore only treat the case where the maps $f_\la$ are non exceptional.

\begin{proof}[Proof of Theorem \ref{th:MSS}]
	$ $\\
	
	\begin{itemize}
		\item  $(1) \Leftrightarrow (2)$: This is a particular case of Theorem \ref{th:mssahlfors}, since 
		finite type maps are Ahlfors island maps.

		\item $(2) \Rightarrow (3)$: This part of the proof follows the same argument as in \cite{mss} and \cite{lyu1}.   Assume that the Julia set moves holomorphically over $U$, and let $H_{\la}$ be the holomorphic motion respecting the dynamics as above. Then $H_{\la}$ maps non-attracting periodic points of $f_{\lam_0}$ in $J(f_{\la_0})$ to  non-attracting  periodic points of $f_\lam$ in $J(f_\lam)$ of the same period. Let $N$ be the maximal period of attracting cycles for $f_{\lam_0}$. \footnote{ Note that we only use a very weak form of Fatou-Shishikura's inequality here, namely that a finite type map $f$ has at most $\card S(f)$ attracting cycles (this is an obvious consequence of the fact that any attracting cycle captures at least one singular value). Epstein has an unpublished proof of a strong version of Fatou-Shishikura's inequality in the setting of finite type maps, which we do not require here.   } Then for all $\lam \in U$, cycles of period more than $N$ must be non-attracting, which implies that attracting cycles have period at most $N$.
		\item $(3) \Rightarrow (1)$: Assume by contraposition that at least one singular value $v_\lam$ is active at $\lam_0$, and let $N \in \N$. 
		We must construct a sequence of parameters $\lam_k \to \lam_0$ such that $f_{\lam_k}$ has an attracting cycle of period at least $N$.

		We begin with the following remark: let us say that a singular value $v_1(\lam)$ has a predecessor if there exists another singular value $v_2(\lam)$ and $n >0$ such that 
		for all $\lam \in M$, $f_\lam^n(v_2(\lam)) = v_1(\lam)$. Then clearly $v_1(\lam)$ is active at $\lam_0$ if and only if its predecessor $v_2(\lam)$ is also active at $\lam_0$. Moreover, a singular value is its own predecessor 
		if and only if it is persistently periodic; but in that case it cannnot be active. 
		Therefore, since there are only finitely many singular values, we may assume without loss of generality that $v_\lam$ has no predecessor.

		If $v_\lam$ is an asymptotic value,
		then we are done by Proposition \ref{lem:vc}.

		We therefore assume from now on that $v_\lam$ is a critical value with no predecessor.
			Let $c_{\lam_0}$ be a critical point such that $f_{\lam_0}(c_{\lam_0})=v_{\lam_0}$.
			If $c_{\lam_0}$ is not a singular value, then we are done by Proposition \ref{lem:center}.
			Otherwise, $c_{\lam_0}$ is both a critical point and a singular value, and then 
			$c_\lam:=\psi_\lam(c_{\lam_0})$ is its motion as a critical point and $\varphi_\lam(c_{\lam_0})$
			 is its motion as a singular value. 
			Since by assumption $v_\lam$ has no predecessor, the critical point $c_\lam=\psi_\lam(c_{\lam_0})$ cannot always be a singular value for all $\lam \in M$.
			This means that  $\varphi_\lam(c_{\lam_0}) \not\equiv \psi_\la(c_{\lam_0})$.	
			Then the set $H:=\{\lam : \varphi_\lam(c_{\lam_0}) = \psi_\lam(c_{\lam_0}) \}$ is an analytic hypersurface of $M$, and by Lemma \ref{lem:perturbA}, we may find $\lam_1 \notin H$ arbitrarily close to $\lam_0$ such that $v_{\lam}$ is also active at $\lam_1$. Then $c_{\lam_1}:=\psi_{\lam_1}(c_{\lam_0}) \notin S(f_{\lam_1})$, and $f_{\lam_1}(c_{\lam_1}) = \varphi_{\lam_1}(v_{\lam_0}) = v_{\lam_1}$.
			So we can again apply Proposition \ref{lem:center} and conclude in this case.

	\end{itemize} 
\end{proof}

\begin{proof}[Proof of Corollary \ref{coro:stabdense}]
	The proof follows the same spirit as in \cite{mss}: let $\lam_0 \in M$, and let $\eps>0$.
	If all singular values of $f_{\lam_0}$ are passive at $\lam_0$, then $\lam_0$ is in the stability locus. Otherwise,  at least one attracting singular value is active.
	If it is a critical value, then by Proposition \ref{lem:center} we can find $\lam_1 \in \B(\lam_0,\eps)$ 
	such that $v_{\lam_1}$ is in a super-attracting cycle for $f_{\lam_1}$. In particular, $v_{\lam}$ becomes passive at $\lam_1$.
	If $v_\lam$ is an asymptotic value, then we use  Proposition \ref{lem:vc} and Theorem \ref{prop:thb} instead to find $\lam_1 \in \B(\lam_0,\eps)$ such that $v_{\lam_1}$ is captured by an attracting cycle (and in particular is passive at $\lam_1)$.

	Applying this successively to all active singular values, we find $\lam' \in \B(\lam_0,k\eps)$ (where $k \leq \card S(f_{\lam_0})$) 
	such that all singular values are passive at $\lam'$. By Theorem \ref{th:MSS}, $\lam'$ is then in the stability locus.
\end{proof}

\begin{proof}[Proof of Corollary \ref{coro:robustbif}]	
	We may choose without loss of generality $\la_0$ as the basepoint of our natural family $\{f_\la\}_{\la \in M}$, 
	that is, we set $f:=f_{\la_0}$ and we write $f_\la = \varphi_\la \circ f \circ \psi_\la^{-1}$ for all $\la \in M$.
	The singular value $v(\la)$ is by definition $\varphi_\la(v(\la_0))$.
	
	By Proposition \ref{prop:misiu}, there exists a parameter $\la_1$ arbitrarily close to $\la_0$ such that 
	$v(\la_1)$ is Misiurewicz, i.e. there exists a repelling periodic point $z(\la)$ and $n \in \N$ 
	such that 
	$$f_{\la_1}^n(v(\la_1))=z(\la_1),$$ and this relation is non-persistent, i.e. $ f_\la^n(\phi_\la(v(\la_0)))- z(\la) \not\equiv 0$ on the neighborhood of $\la_1$.

	Since singular values move holomorphically with the parameter, $v(\la_1)$ is also in the interior of $S(f_{\la_1})$.

	 Moreover, we may choose $z(\la)$ so that it is not a Picard exceptional value. 	Up to passing to a covering (see Lemma~\ref{lem:marked}   ) we can consider   $x_\lambda$ such that $f_\lambda^n(x_\lambda)=z(\lambda)$ such that $x_{\lambda_0}=v(\lambda_0)$. Since $v_{\lambda_0}$ was in the interior of $S(f_{\lambda_0})$,  up to restricting the parameter neighborhood we have that $x_\lambda$ is a singular value for $f_{\lambda}$. Hence each such parameter has a Misiurewicz relation which involves the singular value $x(\lambda)$. On the other hand,  for each $\lambda_*$ in a neighborhood,  $\la \mapsto x(\la) - \phi_\la(x(\la_*))$ is not identically zero, hence each such Misiurewicz relation is not persistent and $\la_*$ is in the bifurcation locus, by Lemma \ref{lem:misiuimpliesactive} and Theorem \ref{th:mssahlfors}.

		This proves that $\la_1$ is in the interior of the bifurcation locus, and since $\la_1$ can be taken arbitrarily close to $\la_0$, we have that $\la_0$ is indeed in the closure of the interior of the bifurcation locus.
\end{proof}

\bibliographystyle{alpha}
\bibliography{BifMeroABF}

\end{document}